\newtheorem{theorem}{Theorem}[section]
\newtheorem{prop}[theorem]{Proposition}
\newtheorem*{rep@theorem}{\rep@title}
\newcommand{\newreptheorem}[2]{%
\newenvironment{rep#1}[1]{%
 \def\rep@title{#2 \ref{##1}}%
 \begin{rep@theorem}}%
 {\end{rep@theorem}}}
\newtheorem{lemma}[theorem]{Lemma}
\theoremstyle{definition}
\newtheorem{mydef}[theorem]{Definition}
\newtheorem{notation}[theorem]{Notation}
\newtheorem{rem}[theorem]{Remark}
\newtheorem{exa}[theorem]{Example}
\newtheorem{cor}[theorem]{Corollary}
\newenvironment{proofMainThm1}{\textit{Proof of Theorem \ref{MainThm1}:}}{\hfill$\square$}
\title{{\bf{Invariants for trees of non-archimedean polynomials and skeleta of superelliptic curves}}}
\author{Paul Alexander Helminck}
\affil{Swansea University, Department of Mathematics}
\begin{document}
\maketitle

\definecolor{qqqqff}{rgb}{0,0,1}

\begin{abstract}
In this paper we generalize the $j$-invariant criterion for the semistable reduction type of an elliptic curve to superelliptic curves $X$ given by $y^{n}=f(x)$. We first define a set of tropical invariants for $f(x)$ using symmetrized Pl\"{u}cker coordinates and we show that these invariants determine the tree associated to $f(x)$. 
We then prove that this tree  
completely determines the reduction type of $X$  
for $n$ that are not divisible by the residue characteristic. The conditions on the tropical invariants that distinguish between the different types are given by half-spaces as in the elliptic curve case. These half-spaces arise naturally as the moduli spaces of certain Newton polygon configurations. We give a procedure to write down their equations and we illustrate this by giving the half-spaces for polynomials $f(x)$ of degree $d\leq{5}$.






\end{abstract}

\section{Introduction}

Let $X$ be a smooth, proper, irreducible curve over a complete algebraically closed non-archimedean field $K$. The Berkovich analytification $X^{\mathrm{an}}$ of $X$ then contains a canonical subgraph often known as the minimal Berkovich skeleton of $X$. 
For elliptic curves, there are two possibilities for the minimal skeleton: it is either a cycle or a vertex of genus $1$. These two options are characterized by the $j$-invariant of the elliptic curve $E$, in the sense that $E^{\mathrm{an}}$ has a cycle if and only if $\mathrm{val}(j)<0$. Furthermore, if $E^{\mathrm{an}}$ has a cycle then the length of this cycle is $-\mathrm{val}(j)$. 
Our goal in this paper is to give similar criteria for superelliptic curves, which are given by equations of the form $y^{n}=f(x)$. We will limit ourselves to the case where $f(x)$ is separable, the other cases are similar. Also, we will assume that $n$ is coprime to the residue characteristic, since we can then express the skeleton in terms of the tree associated to $f(x)$.  

To find the skeleton, we first study the combinatorics behind the roots of 
$f(x)$. These roots determine a tree in the Berkovich analytification $\mathbf{P}^{1,\mathrm{an}}$ and it is well-known 
that this tree is completely determined by its (affine) tropical Pl\"{u}cker coordinates. That is, if we write $\alpha_{i}$ for the roots of $f(x)$ then the tree is determined by the matrix $D=(d_{i,j})$, where 
\begin{equation}
d_{i,j}=\mathrm{val}(\alpha_{i}-\alpha_{j}). 
\end{equation}
Our first goal is to express this tree more invariantly in terms of the coefficients of $f(x)$.  
To that end, we introduce a set of tropical invariants for $f(x)$, which are the valuations of certain symmetric functions in the $(\alpha_{i}-\alpha_{j})^2$. 
 %
Our first main theorem is as follows: 
\begin{theorem}
\label{MainThm1}
Let $f(x)$ and $g(x)$ be two separable polynomials in $K[x]$. 
Then the trees corresponding to $f(x)$ and $g(x)$ are isomorphic if and only if their tropical invariants coincide. 
\end{theorem}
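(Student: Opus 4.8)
The plan is to reduce the statement to a purely combinatorial reconstruction problem about tree metrics and then to solve it by an inductive peeling of the tree. Recall that the tree attached to $f(x)$ is the subtree of $\mathbf{P}^{1,\mathrm{an}}$ spanned by the roots $\alpha_1,\dots,\alpha_d$, and that it is recovered, as an abstract metric tree, from the matrix $D=(d_{i,j})$ with $d_{i,j}=\mathrm{val}(\alpha_i-\alpha_j)$ through the four-point condition for tropical Pl\"ucker vectors (the classical reconstruction of a tree from a tree metric). Two separable polynomials therefore have isomorphic trees if and only if their matrices $D$ and $D'$ lie in the same orbit under the simultaneous relabelling action of $S_d$ on rows and columns. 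The tropical invariants, being valuations of $S_d$-symmetric functions of the quantities $u_{i,j}=(\alpha_i-\alpha_j)^2$, manifestly factor through this orbit once one knows that no cancellation occurs in passing to the valuation, so the theorem becomes the assertion that the $S_d$-orbit of $D$ is determined by the tropical invariants, and conversely.

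For the direction ``isomorphic trees $\Rightarrow$ equal invariants'' the one thing to check is that the valuation of each symmetric function $P(u_{i,j})$ used as an invariant equals the corresponding min-plus (tropicalized) symmetric function evaluated on the $d_{i,j}$; granting this, the invariant depends only on the $d_{i,j}$ and is $S_d$-symmetric, hence depends only on the orbit of $D$, and hence only on the tree. I would establish this no-cancellation claim either from the way the invariants are built out of the Pl\"ucker coordinates in the previous section, or directly: since $f$ is separable all $u_{i,j}$ are nonzero, and one can choose a model of the tree in which the leading coefficients of the $\alpha_i-\alpha_j$ are as generic as the four-point relations permit, so that the term of minimal valuation in $P$ survives; specialising back to $f$ can only raise the valuation, and a dimension/deformation argument rules this out for the particular monomial-type symmetric functions chosen.

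The substance is the converse: reconstructing the orbit of $D$ from the tropical invariants. I would argue by induction on $d$, equivalently on the number of internal vertices of the tree. First, the symmetrized elementary symmetric functions of the $u_{i,j}$ recover, through their valuations and successive differences, the entire sorted list, hence the multiset, of the distances $d_{i,j}$; in particular they recover the minimal value $m=\min_{i,j}d_{i,j}$, which is the height of the first branch point $v$ of the tree. The leaves then split into the clusters $C_1,\dots,C_r$ hanging below $v$, characterised by $d_{i,j}=m$ when $i,j$ lie in different clusters and $d_{i,j}>m$ within a cluster. I would recover the number $r$, the sizes $|C_k|$, the sub-trees carried by the clusters, and the branch-length data $d_{i,j}-m$ internal to each cluster by feeding the higher symmetrized Pl\"ucker coordinates (symmetric functions built from products of several $u_{i,j}$ over compatible index sets, such as the discriminant-type products) through the same valuation-equals-tropicalization principle and then applying the inductive hypothesis to each cluster.

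The main obstacle, and the step I would spend the most care on, is matching the recovered data up: knowing the multiset of cluster-subtrees and, separately, the remaining metric data at $v$ is not enough, because one must know which branch attaches to which cluster. Here the mixed symmetrized invariants, those that multiply together factors $u_{i,j}$ with both indices inside one cluster and factors $u_{i,j}$ with indices in two different clusters, supply the coupling: their valuations encode, for each isomorphism type of sub-tree, the multiset of branch lengths attached to clusters of that type. When two clusters carry isomorphic sub-trees the matching is ambiguous, but by exactly the amount by which the automorphism group of the tree permutes those clusters, so the $S_d$-orbit of $D$ is nonetheless pinned down. Carrying this bookkeeping through the induction, and verifying at each stage that the mixed symmetric functions needed really do occur among the tropical invariants defined earlier, completes the proof.
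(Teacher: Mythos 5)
Your general plan — peel the tree from the root, detect the first branch height from the minimal Newton slope of the order-two invariant, and then recover the clusters by induction — is the right intuition and resembles the bottom-up structure of the paper's argument, which iterates over $c$-trivial subtrees of increasing height. But the proposal has a genuine gap exactly where the paper does its real work, and in the direction you flag as "the one thing to check" your proposed fix does not close the gap.

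The central missing idea is the \emph{recursive weight assignment}. You propose to use "higher symmetrized Pl\"ucker coordinates" and "mixed symmetrized invariants" built from products of $u_{i,j}$ over "compatible index sets", i.e.\ the symmetrizations $F_{G,1}$ of $\prod_{e\in E(G)} u_e$ for various subgraphs $G$ with the \emph{trivial} weight function. The paper remarks explicitly after Definition \ref{TropQuasIinvariants} (the remark following Theorem~\ref{MainThm1}'s statement in Section~\ref{SectionTropicalizingInvariants}) that one might hope trivial weights suffice, but that the author cannot prove it — and the whole of Section~\ref{SectionTruncatedStructures} exists to build, from a maximal $c$-trivial subtree, a \emph{non-trivially} weighted graph $(G_c,k_c)$ whose weights $k_m(e)=C_{j}+1$ grow strictly as $d_e$ decreases (Corollary~\ref{Decreasing}, Lemma~\ref{WeightInequality}). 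Those strictly increasing weights are what make Proposition~\ref{MainProposition4} true: they force the minimal Newton slope $M=\sum 2C_i b_i$ to be attained \emph{only} by edge-weighted subgraphs whose preimages under $\sigma$ reproduce the branching partition level-by-level (Lemmas~\ref{LevelEquality0b}–\ref{LevelEquality2a}). Without the weights, a permutation $\sigma$ that shuffles edges across levels can still hit the same total valuation, and the minimal slope no longer certifies an isomorphic subtree. Your "matching" step — coupling cluster-subtree types to branch lengths via mixed invariants — is precisely the point where this certification is needed, and the sketch offers no substitute mechanism.

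There is also a problem with your treatment of the forward implication. You reduce it to a "no cancellation" claim that $\mathrm{val}(P(u_{i,j}))$ equals the min-plus value of $P$ on the $d_{i,j}$, and propose to establish it by perturbing to a generic lift and then "specialising back to $f$", observing that specialisation can only raise the valuation. But raising the valuation is exactly the failure mode: if specialisation raises it, the invariant \emph{does} differ from the generic (tree-determined) value, so the dimension/deformation appeal proves the opposite of what you need for the non-generic $f$. Concretely, for $d=3$ with roots $0,1,\zeta_6$ (a primitive sixth root of unity, residue characteristic $\neq 2,3$) the tree is trivial, yet the coefficient $\sum_{i<j}(\alpha_i-\alpha_j)^2 = 2(\zeta_6^2-\zeta_6+1) = 0$ has infinite valuation, whereas for the roots $0,1,2$ it is a unit. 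What saves the statement is that such coefficients lie strictly above the Newton polygon and therefore are not among the finitely many vertex valuations the proof actually uses; the correct formulation (implicit in the paper's proof and in the restated version of Theorem~\ref{MainThm1}) is that the \emph{Newton-polygon} data of the chosen generating polynomials coincides, which is determined by the multiset of root valuations and hence by the tree. You should state the forward direction in those terms rather than assert unconditional no-cancellation.
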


We can thus completely recover the isomorphism class of the tree of a polynomial from its tropical invariants. Moreover, the proof shows that we only need finitely many invariants to reconstruct this tree.    
We illustrate Theorem \ref{MainThm1} in Section \ref{SectionExamplesTrees} by determining the trees of all polynomials of degree $d\leq{5}$. For $d=5$ for instance, there are $18$ different cases to consider, giving $12$ different phylogenetic types. The conditions on the tropical invariants 
are given by rational half-spaces as in the case of elliptic curves. They arise in this paper as equations that describe moduli of Newton polygons, see Section \ref{SectionExamplesTrees}. 

After the proof of Theorem \ref{MainThm1} in Section \ref{Section2}, we show in Section \ref{Section3} that the tropical invariants allow us to completely recover the semistable reduction type of superelliptic curves. The result is as follows:
\begin{theorem}\label{MainThm2}
Let $X_{n}$ be the superelliptic curve defined by $y^{n}=f(x)$, where $f(x)$ is a separable polynomial. 
Then for any 
$n$ satisfying $\mathrm{gcd}(n,\mathrm{char}(k))=1$, the minimal weighted metric graph $\Sigma(X_{n})$ of $X_{n}$ is completely determined  
by the tropical invariants of $f(x)$.  
\end{theorem}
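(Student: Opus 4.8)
The plan is to reduce Theorem~\ref{MainThm2} to Theorem~\ref{MainThm1} and then to the theory of tame cyclic covers of $\mathbf{P}^1$. By Theorem~\ref{MainThm1} the tropical invariants of $f$ determine the tree $T_f\subseteq\mathbf{P}^{1,\mathrm{an}}$ spanned by the roots $\alpha_1,\dots,\alpha_d$ (with its metric and marked leaves), so it suffices to show that $T_f$, together with the integer $n$, determines $\Sigma(X_n)$. First I would record on $T_f$ the full branch data of the finite morphism $\varphi\colon X_n^{\mathrm{an}}\to\mathbf{P}^{1,\mathrm{an}}$: since $f$ is separable each root $\alpha_i$ is a branch point of exponent $n$, and if $n\nmid d$ the point $\infty$ is a branch point of exponent $n/\gcd(n,d)$. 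I would then enlarge $T_f$ to the subtree $T_f^{+}\subseteq\mathbf{P}^{1,\mathrm{an}}$ obtained by adjoining the arc from $T_f$ to the type-$1$ point $\infty$; this arc meets $T_f$ at the Gauss point of the smallest closed disc containing all roots, so its attachment point, and hence $T_f^{+}$ itself, is part of the data recovered in Theorem~\ref{MainThm1}. By construction $T_f^{+}$ is a semistable skeleton of $\mathbf{P}^{1,\mathrm{an}}$ whose leaves contain the whole branch locus of $\varphi$.

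Next I would invoke simultaneous semistable reduction for tamely ramified covers: because $\gcd(n,\mathrm{char}(k))=1$ the cover $\varphi$ is tame, the inverse image $\varphi^{-1}(T_f^{+})$ is a skeleton of $X_n^{\mathrm{an}}$, and $\varphi$ restricts to a finite harmonic morphism of metric graphs $\varphi^{-1}(T_f^{+})\to T_f^{+}$ whose combinatorial and metric structure is entirely local and governed by Kummer theory. Over an edge $e$ of length $\ell$, let $m_e$ be the sum of the branch exponents lying on one side of $e$, reduced modulo $n$ (for a separable polynomial these exponents are the $1$'s at the roots, together with $-d$ at $\infty$ if present); then $e$ has $\gcd(n,m_e)$ preimages, each mapping to $e$ with expansion factor $n/\gcd(n,m_e)$ and hence of length $\ell\,\gcd(n,m_e)/n$. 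Over a vertex $v$, the residual curve $C_v\cong\mathbf{P}^1_k$ carries the reductions of the edges in the star of $v$ and of the branch points specialising to $v$, each with its exponent modulo $n$; the number of components of $\varphi^{-1}(T_f^{+})$ above $v$, their genera, and the way the edges above the star attach to them are exactly the data of the corresponding cyclic Kummer cover of $\mathbf{P}^1_k$, read off from Riemann--Hurwitz.

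Assembling these local pictures produces the weighted metric graph $\varphi^{-1}(T_f^{+})$ as an explicit function of $T_f^{+}$, and the minimal skeleton $\Sigma(X_n)$ is obtained from it by the purely combinatorial stabilisation process of deleting genus-$0$ vertices of valence $\le 1$ together with their edge, and smoothing genus-$0$ bivalent vertices. Since $T_f^{+}$ is determined by $T_f$, which by Theorem~\ref{MainThm1} is determined by the tropical invariants of $f$, so is $\Sigma(X_n)$. For small $d$ one can moreover follow this construction to extract the finitely many inequalities on the tropical invariants that separate the possible shapes of $\Sigma(X_n)$, which is what is carried out in Section~\ref{SectionExamplesTrees}.

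The main obstacle is the middle step: proving carefully that $\varphi^{-1}(T_f^{+})$ is a skeleton of $X_n^{\mathrm{an}}$ and that the induced harmonic morphism---the number and genera of the components over each vertex, the number and lengths of the edges over each edge, and all incidences---depends only on the abstract marked metric tree $T_f^{+}$, independently of any choice of coordinate or of semistable model. Tameness is essential here, both to guarantee that normalisation commutes with reduction and that the local covers are cyclic of the expected degree, and the vertex-by-vertex analysis must be carried out uniformly, in particular at vertices where several clusters of roots collide and at the point at infinity. Once this is in place, the stabilisation step and the appeal to Theorem~\ref{MainThm1} are routine.
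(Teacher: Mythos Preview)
Your proposal is correct and follows essentially the same route as the paper. Both reduce to Theorem~\ref{MainThm1}, invoke the tame simultaneous semistable reduction theorem to lift the tree $T_f^{+}$ to a skeleton of $X_n^{\mathrm{an}}$, compute the number of edges and vertices in each fibre via Kummer theory (your ``sum of branch exponents on one side of $e$ modulo $n$'' is exactly the paper's slope $\delta_e(-\log|f|)$ via the slope formula), read off edge lengths and vertex genera from expansion factors and local Riemann--Hurwitz, and then stabilise. The one place where the paper is more explicit than your sketch is the global gluing: knowing the local fibres over each vertex and edge is not \emph{a priori} enough to determine how they assemble into a graph, and the paper isolates this as a separate lemma (Lemma~\ref{GluingData}) whose proof uses crucially that the Galois group $\mathbf{Z}/n\mathbf{Z}$ is cyclic, so that decomposition groups are determined by their orders alone and the quotient construction $\bigsqcup_i \Sigma'_i/\sim$ is unambiguous. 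You gesture at this when you say the attachments are ``exactly the data of the corresponding cyclic Kummer cover'', but it is worth making the argument explicit, since for a non-abelian Galois cover the analogous statement would fail.
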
 
The main ingredients in the proof of this theorem are Theorem \ref{MainThm1} and the non-archimedean slope formula for the piecewise linear function $-\mathrm{log}|f|$ on the Berkovich analytic space $\mathbf{P}^{1,\mathrm{an}}\backslash{\mathrm{Supp}(f)}$. 
The piecewise linear function $-\mathrm{log}|f|$ can be found in practice 
using some elementary potential theory, we give explicit examples of this in Section \ref{SectionPotentialReduction}. Here we also give formulas for the edge lengths and the genera of the vertices occurring in $\Sigma(X_{n})$. 








The structure of the paper is as follows. We start by defining marked tree filtrations, which give a function-theoretic way of looking at rooted metric trees. We then define the tropical invariants in Section \ref{SectionQuasiInvariants} using the concept of edge-weighted graphs. 
In Section \ref{SectionTruncatedStructures}, we use this procedure to assign an invariant to a subtree and we show that we can predict the valuations of these invariants in certain cases. 
We then use this to give our proof of Theorem \ref{MainThm1}. After this, we determine the polyhedral structure of the tree space for polynomials of degree $d=3,4,5$.

In Section \ref{Section3}, we study superelliptic curves and their skeleta. We prove Theorem \ref{MainThm2} and we give a criterion for (potential) good reduction in Section \ref{SectionPotentialReduction}. We finish the paper by using the polyhedral structure of the tree spaces for $d=3,4,5$ to give a classification of the skeleta of superelliptic curves determined by $y^{n}=f(x)$, where $\mathrm{deg}(f(x))=d$ and $\mathrm{gcd}(n,\mathrm{char}(k))=1$. As a special case, we obtain complete descriptions of the semistable reduction types of genus $2$ curves (similar to the one in \cite{liu}) and of {\it{Picard curves}}, which are genus $3$ curves of the form $y^{3}=f(x)$ with $\mathrm{deg}(f(x))=4$. 

\subsection{Connections to the existing literature}

The criterion using $\mathrm{val}(j)$ for the semistability of elliptic curves has been known for quite some time, it goes back at least to the work of Andr\'{e} N\'{e}ron, see \cite[Page 100]{Neron2}. For curves of genus two, a criterion in terms of Igusa invariants was given in \cite{liu}, this was put into a tropical framework in \cite{Igusa}. The fact that the reduction type of a superelliptic curve defined by $y^{n}=f(x)$ is related (in residue characteristics not dividing $n$) to the roots of $f(x)$ seems to have been known for some time. For instance, in \cite[Remarque 1]{liu} we find the following statement
\begin{center}
{\flushleft{
{\it{
Pour une courbe $C$ sur $K$ d\'{e}finie par une \'{e}quation $y^{n}=P(x)$, la connaissance des racines (avec multiplicit\'{e}) de $P(x)$ d\'{e}termine la courbe $\mathcal{C}_{s}$, si $\mathrm{car}(k)$ ne divise pas $n$ (voir [Bo] pour le cas $n=2$). Mais dans la pratique, il n'est pas toujours ais\'{e} de trouver ces racines.
}}
}}
\end{center}
The main contribution of this paper is that this "connaissance des racines" is removed using tropical invariants. Explicitly, the invariants determine the tree associated to $f(x)$ up to isomorphism and this determines the structure of $\mathcal{C}_{s}$. A related result appears in \cite{tropabelian} and \cite{supertrop}. There it was shown that the structure of $\mathcal{C}_{s}$ (in the discretely valued case) can be determined from the  {\it{marked}} tree associated to $f(x)$. The latter in turn is determined by the relative valuations of the roots $d_{i,j}=\mathrm{val}(\alpha_{i}-\alpha_{j})$. In this paper we show that we can find symmetric analogues of the $d_{i,j}$, the tropical invariants of $f(x)$, that recover the {\it{unmarked}} tree associated to $f(x)$. 

In the meantime there has been progress in obtaining generalizations of the tropical elliptic curve criterion by many other authors. In \cite{LiuSmoothPlaneQuartics}, reduction types of smooth quartics were studied using Dixmier-Ohno invariants. Here they used ideas from a paper by Seshadri \cite{SESHADRI1977} on invariant theory over arbitrary commutative rings. In particular, they give a criterion for good reduction of non-hyperelliptic genus $3$ curves in terms of these invariants. In \cite{bouw_wewers_2017}, a similar criterion for good reduction of Picard curves of the form $y^{3}=f(x)$ with $f$ separable is given. 
For curves $C$ of genus $3$ that admit a Galois morphism $C\to\mathbf{P}^{1}$ with Galois group $\mathbf{Z}/2\mathbf{Z}\times\mathbf{Z}/2\mathbf{Z}$, necessary conditions for the reduction types are given in \cite{WomeninNTReductionGen3}. 
They use the tame simultaneous semistable reduction theorem (in a slightly weaker form, see Theorem \ref{SimultaneousSemSta} 
for the more general version) together with explicit local calculations. Since these fall under the category of solvable coverings, it might be interesting to investigate their geometric properties as in \cite{tropabelian}. In the latter, explicit local and global criteria for reconstructing skeleta were given. These were then used 
to study $S_{3}$-coverings of the projective line, giving for instance a new proof of the classical theorem for elliptic curves $y^2=x^3+Ax+B$ using the covering $(x,y)\mapsto{y}$ instead of $(x,y)\mapsto{x}$. 


Theorem \ref{MainThm1} fits into the tropical literature as follows. 
Let $K$ be a complete algebraically closed non-archimedean field. We can then consider the affine space $X$ associated to the ring 
\begin{equation}
C=K[p_{i,j}]=K[\alpha_{i}-\alpha_{j}]\subset{K[\alpha_{i}]}.
\end{equation}
We are interested in tropicalizing the complement of the hyperplanes $p_{i,j}=0$, so we localize by the discriminant $\Delta=\prod_{i<j}(\alpha_{i}-\alpha_{j})^2$. There is then a natural action of $S_{d}$ on $C[\Delta^{-1}]$. Explicitly, it is given by $\sigma(p_{i,j})=p_{\sigma(i)\sigma(j)}$. From this we then also obtain a group action on the tropicalization 
$\mathrm{trop}(C[\Delta^{-1}])$. 

We are now interested in the quotients of $C[\Delta^{-1}]$ and $\mathrm{trop}(C[\Delta^{-1}])$ under the action of $S_{d}$. 
Algebraically, one can find invariants using the methods in \cite[Section 2]{sturmfels2008algorithms}. 
To obtain tropical quotients, we do the following. We first consider the orbit 
of $(\alpha_{i}-\alpha_{j})^2$ under $S_{d}$ and construct a monic polynomial $F(x)\in{C[\Delta^{-1}]}[x]$ whose roots are representatives of this orbit. 
The coefficients of this polynomial are then invariant under $S_{d}$ and the corresponding field has the same transcendence degree as the original field generated by the $(\alpha_{i}-\alpha_{j})^2$, see \cite[Proposition 2.1.1]{sturmfels2008algorithms}. These coefficients do not necessarily generate the ring of invariants, but one might wonder whether these invariants are sufficient to determine the tree type. For $d\leq{4}$ this is true, but for higher $d$ it is not, see Section \ref{SectionExamplesTrees}. 
We thus see that we have to cast our net somewhat wider. In Section \ref{SectionQuasiInvariants}, we consider more subtle weighted symmetrizations, the tropical invariants, which allow us to 
separate orbits. 
We can then consider the map that takes a separable polynomial over $K$ and associates to it the corresponding tropical invariants. We denote this map by $D[\Delta^{-1}](K)\to\mathbf{T}^{m}$, where 
$\mathbf{T}=\mathbf{R}\cup\{\infty\}$ is the tropical affine line. We can then set up an equivalence relation for polynomials that define isomorphic marked tree filtrations. If we denote the set-theoretic quotient by $Z:=D[\Delta^{-1}](K)/\sim$, then Theorem \ref{MainThm1} tells us that we have an induced injective map from $Z\to\mathbf{T}^{m}$. The proof moreover gives an explicit polyhedral complex inside $\mathbf{T}^{m}$ that contains the image of this map. 
 It would interesting to see if the image under this map has some additional structure. 

A subject that is related to the above is the study of the moduli space of $d$ points on the projective line. We loosely view this as the projective marked version of the set-up we had above (although one has to make some modifications to fit it in). By a well-known result \cite{Kapranov1993} of Kapranov over $\mathbf{C}$, we have isomorphisms 
\begin{equation}\label{Isomorphisms}
\overline{\mathcal{M}}_{0,d}\simeq{}(\mathbf{P}^{1})^{d}//\mathrm{PGL}_{2}\simeq{}G(2,d)//T^{d-1}.
\end{equation} 
Here $\overline{\mathcal{M}}_{0,d}$ is the stable compactification of the moduli space $\mathcal{M}_{0,d}$ of $d$ distinct {\it{marked}} points on the projective line, $G(2,d)$ is the Grassmannian of $2$-dimensional planes in $d$-space and $T^{d-1}$ is the $(d-1)$-dimensional torus; the quotients are Chow quotients. There are tropical variants of these isomorphisms as well: if we embed the Grassmannian using the Pl\"{u}cker embedding, then the tropicalization of the open part $G^{0}(2,d)$ corresponding to nonzero Pl\"{u}cker coordinates can be identified with the space of all tree metrics on $d$ points:
\begin{equation}
\mathrm{trop}(G^{0}(2,d))=-\Delta_{\mathrm{tr}},
\end{equation} see \cite[Theorem 4.3.5]{tropicalbook}. Furthermore, if we consider the quotient by the lineality space of $\mathrm{trop}(G^{0}(2,d))$ (which corresponds to the torus action we had earlier), then this space is the tropicalization of $\mathcal{M}_{0,d}$ under a suitable embedding, see \cite[Theorem 6.4.12]{tropicalbook}. In light of the current paper, one can similarly wonder about possible $S_{d}$-quotients of the above objects. Theorem \ref{MainThm1} can be seen as a first step towards understanding these. 

Finally, we give a short account of the various ways that group actions on toric varieties have appeared in the literature. 
For instance, there is the example of a toric variety with a finite group action such that the quotient is not toric, see 
\cite{SwanToric1969}. Here a permutation action of $\mathbf{Z}/47\mathbf{Z}$ on the $47$-dimensional torus was used, together with the fact that $\mathbf{Z}[\zeta_{23}]$ is not a UFD (it is the first such cyclotomic ring). On the other hand, if the group in question is a suitable torus, then the quotient exists as a toric variety, see \cite{KapranovSturmfels1991}. This technique is unfortunately not applicable to the current paper since we are not working with a toric group. 



\subsection{Notation and terminology} 
Throughout this paper, 
we will use the same set of assumptions and notation as in \cite{TropFund1} unless mentioned otherwise. 
This paper in turn is heavily influenced by \cite{ABBR1} and \cite{BPRa1}, so the reader might benefit from a review of these as well. We give a short summary of the most important concepts used in this paper:

\begin{itemize}
\item $K$ is a complete algebraically closed non-archimedean field with valuation ring $R$, maximal ideal $\mathfrak{m}_{R}$, residue field $k$ and nontrivial valuation $\mathrm{val}:K\to\mathbf{R}\cup\{\infty\}$. The symbol $\varpi$ is used for any element in $K^{*}$ with $\mathrm{val}(\varpi)>0$. The absolute value associated to $\mathrm{val}(\cdot{})$ is defined by $|x|=e^{-\mathrm{val}(x)}$, where $e$ is Euler's constant. This constant will not be used in the rest of the paper; we will reserve the letter $e$ for edges instead. 
\item $X$ is a smooth, irreducible and proper curve over $K$. We will often omit these and simply say that $X$ is a curve. Its analytification in the sense of \cite{Berkovich1993} and \cite{berkovich2012} is denoted by $X^{\mathrm{an}}$.
\item A finite morphism of curves $\phi:X\to{Y}$ gives a finite morphism of analytifications $\phi^{\mathrm{an}}:X^{\mathrm{an}}\to{Y^{\mathrm{an}}}$. We say that $\phi$ is residually tame if for every $x\in{X^{\mathrm{an}}}$ in the \'{e}tale locus with image $y\in{Y^{\mathrm{an}}}$,  the extension of completed residue fields $\mathcal{H}(y)\to\mathcal{H}(x)$ is tame. See \cite[Section 2.1]{TropFund1}. 
\item 
%
For $r\in\mathbf{R}$, we define closed disks and annuli 
by $\mathbf{B}(x,r)=\{y\in{K}:\mathrm{val}(x-y)\geq{r}\}$ and $\mathbf{S}(x,r)=\{y\in{K}:0<{}\mathrm{val}(x-y)\leq{r}\}$. Their open counterparts are denoted by $\mathbf{B}_{+}(x,r)$ and $\mathbf{S}_{+}(x,r)$. 
If the center point is $0$, then we denote these by $\mathbf{B}(r)$, $\mathbf{S}(r)$, $\mathbf{B}_{+}(r)$ and $\mathbf{S}_{+}(r)$. 
We also use the notation $\mathbf{S}_{+}(a)$ for an element $a\in{K^{*}}$ with $\mathrm{val}(a)\geq{0}$, which by definition is $\mathbf{S}_{+}(\mathrm{val}(a))$. Furthermore, we will use the same notation to denote the corresponding Berkovich analytic subspaces, see \cite[Section 3]{ABBR1} and \cite[Section 2]{BPRa1} for more on these. 
\item Semistable vertex sets of curves are denoted by $V(\Sigma)$, where $\Sigma$ is its corresponding skeleton. 
For open annuli $\mathbf{S}_{+}(x,r)$, we denote the skeleton by $e^{0}$. We call these open edges. Similarly, for a closed annulus  $\mathbf{S}(x,r)$ we denote the skeleton by $e$. We call these closed edges. We again refer the reader to \cite[Section 2]{BPRa1} and \cite{ABBR1} for more details. 
\item Any semistable vertex set $V(\Sigma)$ with skeleton $\Sigma$ of a curve $X$ can be enhanced to a metrized complex of $k$-curves, see \cite{ABBR1} for the terminology. Loosely speaking, the construction is as follows. We start with the metric graph corresponding to $\Sigma$ and we include the data of a residue curve $C_{x}$ for every type-$2$ point in $V(\Sigma)$. We then identify every tangent direction in $\Sigma$ with the corresponding closed point on $C_{x}$. This concept gives a convenient way of expressing the fact that for residually tame coverings of curves, the local Riemann-Hurwitz formulas hold. We will use this in the proof of Theorem \ref{MainThm2}. If we only record the genus of every curve $C_{x}$, then we obtain the weighted metric graph associated to $V(\Sigma)$. 
\item For any curve $X$, there is a minimal metric graph $\Sigma_{\mathrm{min}}\subset{X^{\mathrm{an}}}$ which one obtains from any skeleton $\Sigma$ of $X$ by contracting its leaves. By adding the additional data of the genera of the type-$2$ points in $\Sigma$, we then arrive at the notion of the minimal weighted metric graph of $X$. This is the object of interest to us in Theorem \ref{MainThm2}. We also call this the {\it{reduction type}} of $X$. 



\end{itemize}

\section{Marked tree filtrations and tropical invariants}\label{Section2}

In this section we study trees associated to separable polynomials $f(x)\in{K[x]}$ over a   non-archimedean field. We start by defining the framework of marked tree filtrations, which gives a more function-theoretic way of looking at rooted marked trees, see Section \ref{SectionTreeFiltrations}. We define tropical invariants in Section \ref{SectionQuasiInvariants} and in Section \ref{SectionTruncatedStructures} we use this to assign invariants to maximal subtrees. We then use these to prove 
Theorem \ref{MainThm1}. We show in Section \ref{SectionExamplesTrees} that we can write down polyhedral equations for each filtration type using Newton polygon half-spaces. We finish this section by giving explicit equations for the 
space of trees associated to polynomials of degree $d\leq{5}$. 



\subsection{Marked tree filtrations}\label{SectionTreeFiltrations}

Let $T$ be a metric graph as in \cite[Definition 2.2]{ABBR1}. 
If the first Betti number of $T$ is zero, then we say that $T$ is a metric tree. 
 A leaf of a metric tree is a point of valency $1$. We assume throughout that the set $V_{\infty}(T)$ of infinite vertices of a metric tree $T$ is equal to the leaf set $L(T)$ of $T$.  
 A rooted or marked metric tree is a metric tree with a distinguished leaf which we denote by $\infty$. 
 Let $A$ be a finite set. We say that $T$ is a rooted metric tree on $(A,\infty)$ 
 if there is a bijection $A\cup\{\infty\}\to{L(T)}$ which takes $\infty$ to $\infty$.  
We now introduce the concept of a marked tree filtration, which will turn out to be equivalent to that of a rooted metric tree.

\begin{mydef}{\bf{[Marked tree filtrations]}}\label{TreeFiltrations}
Let $A$ be a finite set and let
\begin{equation}
\phi:A\times\mathbf{R}\to{\mathbf{R}}
\end{equation}
be any function. We say that $\phi$ defines a marked tree filtration on $A$ if the following hold:
\begin{enumerate}
\item There is a $c_{0}\in\mathbf{R}$ such that the function $\phi(x,c_{0})$ obtained by restricting $\phi$ to $A\times\{c_{0}\}$ is constant.
\item Suppose that $c_{2}>c_{1}$. If $x,y\in{A}$ satisfy $\phi(x,c_{2})=\phi(y,c_{2})$, then they also satisfy $\phi(x,c_{1})=\phi(y,c_{1})$. 
\item For any sufficiently large $c$, the restricted function $\phi(x,c):A\to\mathbf{R}$ is injective. 
\end{enumerate}

The set $A$ is the set of {\it{finite leaves}} of the marked tree filtration $\phi$. We say that $x,y\in{A}$ are in the same $c$-branch if $\phi(x,c)=\phi(y,c)$. Two functions $\phi$ and $\psi$ are said to give the {\it{same}} marked tree filtration on $A$ if for every $c\in\mathbf{R}$, we have that $\phi(x,c)=\phi(y,c)$ for $x,y\in{A}$ if and only if $\psi(x,c)=\psi(y,c)$. Given two sets $A$ and $B$ with marked tree filtrations $\phi$ and $\psi$, we say that they are isomorphic if there exists a bijection $i:A\to{B}$ such that the marked tree filtration $\psi\circ{}(i,\mathrm{id})$ on $A$ is the same as $\phi$. 

\end{mydef}

\begin{rem}
By combining the first and second condition, we see that for every $c\leq{c_{0}}$, the restriction $\phi(x,c)$ is constant. We view this as the extra leaf corresponding to $\infty$. 
\end{rem}

\begin{rem}
The choice of $\mathbf{R}$ for the target space of $\phi$ in the definition of a marked tree filtration is not strictly necessary: we can take any set whose cardinality is greater than that of $A$.
\end{rem}

\begin{exa}\label{TreeBerkovich}
Let $K$ be a non-archimedean field and let $A=\{\alpha_{1},...,\alpha_{d}\}\subset{K}$ be a set of pairwise distinct elements. 
We can assign a marked tree filtration to $A$ as follows. For every pair $(\alpha_{i},\alpha_{j})\in{A^{2}}$ not in the diagonal, we define 
\begin{equation}
d_{i,j}=\mathrm{val}(\alpha_{i}-\alpha_{j}).
\end{equation} 
We then let $\phi:A\times{\mathbf{R}}\to\mathbf{R}$ be any function such that $\phi(\alpha_{i},c)=\phi(\alpha_{j},c)$ if and only if $d_{i,j}\geq{c}$. It is not too hard to see that this defines a marked tree filtration.

\end{exa}

We now assign a marked tree filtration to a rooted metric tree $T$ on $A$. We first construct the root vertex of $T$. For any $i\in{A}$, consider the unique path $\ell_{i,\infty}$ from $i$ to $\infty$. The intersection of these paths is a line segment and we denote the endpoint not equal to $\infty$ by $v_{\infty}$. This is the root vertex of $T$.   
For every pair $(i,j)\in{A^2}$, we now consider the triple $(\ell_{i,j},\ell_{i,\infty},\ell_{j,\infty})$ consisting of the unique paths from $i$ to $j$, $i$ to $\infty$ and $j$ to $\infty$. The intersection of these three paths is a finite point $\mathrm{join}(i,j)$ on $T$. 
Let $d_{i,j}$ be the distance from $v_{\infty}$ to $\mathrm{join}(i,j)$.  Note that this is finite since $\mathrm{join}(i,j)$ and $v_{\infty}$ are not leaf vertices. 
For any positive real number $c$, 
we define $\phi(x,c):A\to\mathbf{R}$ to be any function such that $\phi(i,c)=\phi(j,c)$ if and only if $d_{i,j}\geq{c}$. 
A quick check shows that the axioms in Definition \ref{TreeFiltrations} are automatically satisfied for these functions. Conversely, we can reconstruct a rooted metric tree on $A$ from a marked tree filtration $\phi$ as follows. For every $i\in{A}$, we take an infinite line segment $L_{i}=\mathbf{R}\cup\{\infty\}\cup\{-\infty\}$. We think of the point at positive infinity as corresponding to the leaf $i\in{A}$, the point at negative infinity is the point corresponding to the leaf $\infty$ of $T$. We now define an equivalence relation on the disjoint union $L=\bigsqcup{L_{i}}$ as follows. We say that $(i,c)\sim(j,c)$ for a $c\in\mathbf{R}$ if and only if $\phi(i,c)=\phi(j,c)$. The quotient $L/\sim$ is then automatically a rooted metric tree with finite leaf set $A$ and marked tree filtration the same as $\phi$. 
For the remainder of this paper, we will use the framework of marked tree filtrations to study rooted metric trees.  

\begin{exa}\label{TreeBerkovich2}{\bf{[Finite Berkovich trees]}}
Consider the marked tree filtration from Example \ref{TreeBerkovich}. The associated metric tree has a canonical interpretation in terms of Berkovich spaces, as we now explain. We assume for simplicity that $\mathrm{val}(\alpha_{i})\geq{0}$. We first consider the $\alpha_{i}$   
as type-$1$ points of the $K$-analytic space 
$\mathbf{P}^{1,\mathrm{an}}$. These type-$1$ points can be considered as the seminorms corresponding to degenerate disks $\mathbf{B}(\alpha_{i},\infty)$. We can now connect these type-$1$ points to the type-$2$ point of the closed disk 
$\mathbf{B}(0)$ as follows. 
We take the path given by the closed disks $\mathbf{B}(\alpha_{i},r)$ for $0\leq{r}<\infty$. Two of these paths meet at $\mathbf{B}(\alpha_{i},\mathrm{val}(\alpha_{i}-\alpha_{j}))=\mathbf{B}(\alpha_{j},\mathrm{val}(\alpha_{i}-\alpha_{j}))$. 
We now similarly construct a path from the point induced by $\mathbf{B}(0)=\mathbf{B}(\alpha_{i},0)$ to $\infty$ using $\mathbf{B}(0,r)$, where $r\in(-\infty,0]$. We then easily see that these paths give a metric tree isomorphic to the one we created from the marked tree filtration. We will use this identification throughout the paper without further mention.   
 
\end{exa}

\begin{mydef}
{\bf{[Subtrees]}}
Let $\phi:A\times\mathbf{R}_{\geq{0}}\to\mathbf{R}$ be a marked tree filtration and let $i:S\to{A}$ be an injection. We say that $i$ defines a marked subtree of $\phi$.  
\end{mydef}

Note here that the induced function $\phi_{S}:S\times\mathbf{R}\to\mathbf{R}$ defined by $\phi_{S}(x,c):=\phi(i(x),c)$ 
automatically satisfies the conditions of Definition \ref{TreeFiltrations}. We thus see that a subtree comes with a canonical marked tree filtration. We will also say that the marked tree filtration $\phi_{S}$ is a subtree of $\phi$.  

\begin{mydef}
{\bf{[Truncated structures]}}
Let $\phi$ be a marked tree filtration on $A$ and let $i:S\to{A}$ be an injection giving a marked subtree of $\phi$ with induced filtration $\phi_{S}$. For any positive real number $c$, we say that $i$ is $c$-trivial if the restricted function $\phi_{S}(x,c):S\to\mathbf{R}$ is injective. 
For any $c\in\mathbf{R}$, there is a maximal $n_{0}$ such that there is a subtree on $n_{0}$ leaves with trivial $c$-structure. We refer to this $n_{0}$ as the number of branches at height $c$. A subtree of maximal cardinality for a given $c$ is called a maximal $c$-trivial subtree. This maximal $c$-trivial subtree is uniquely defined up to a permutation of leaves in the same $c$-branch. This permutation induces an isomorphism of the marked tree filtrations of the maximal $c$-trivial subtrees. 
We refer to any such maximal $c$-trivial tree as the $c$-truncated structure of $\phi$. 
We say that two marked tree filtrations are isomorphic up to height $c$ if their $c$-truncated structures are isomorphic. This is independent of the maximal $c$-trivial subtree chosen. 
\end{mydef}

\begin{mydef}
{\bf{[Branching]}}
Let $\phi$ be a marked tree filtration. We say that $\phi$ has branching at $c\in\mathbf{R}$ if for every $\epsilon>0$, the number of branches at height $c-\epsilon$ is different from the number of branches at $c+\epsilon$. For every marked tree filtration, there are only finitely many heights where branching occurs. We call these the branch heights. 
\end{mydef}

\begin{exa}\label{ExampleTreesFiveLeaves}
Consider the three marked tree filtrations on five leaves indicated in Figure \ref{TreesFiveLeaves}. Here the infinite ray to $\infty$ is omitted.  
\begin{figure}
\centering
\includegraphics[height=8cm]{{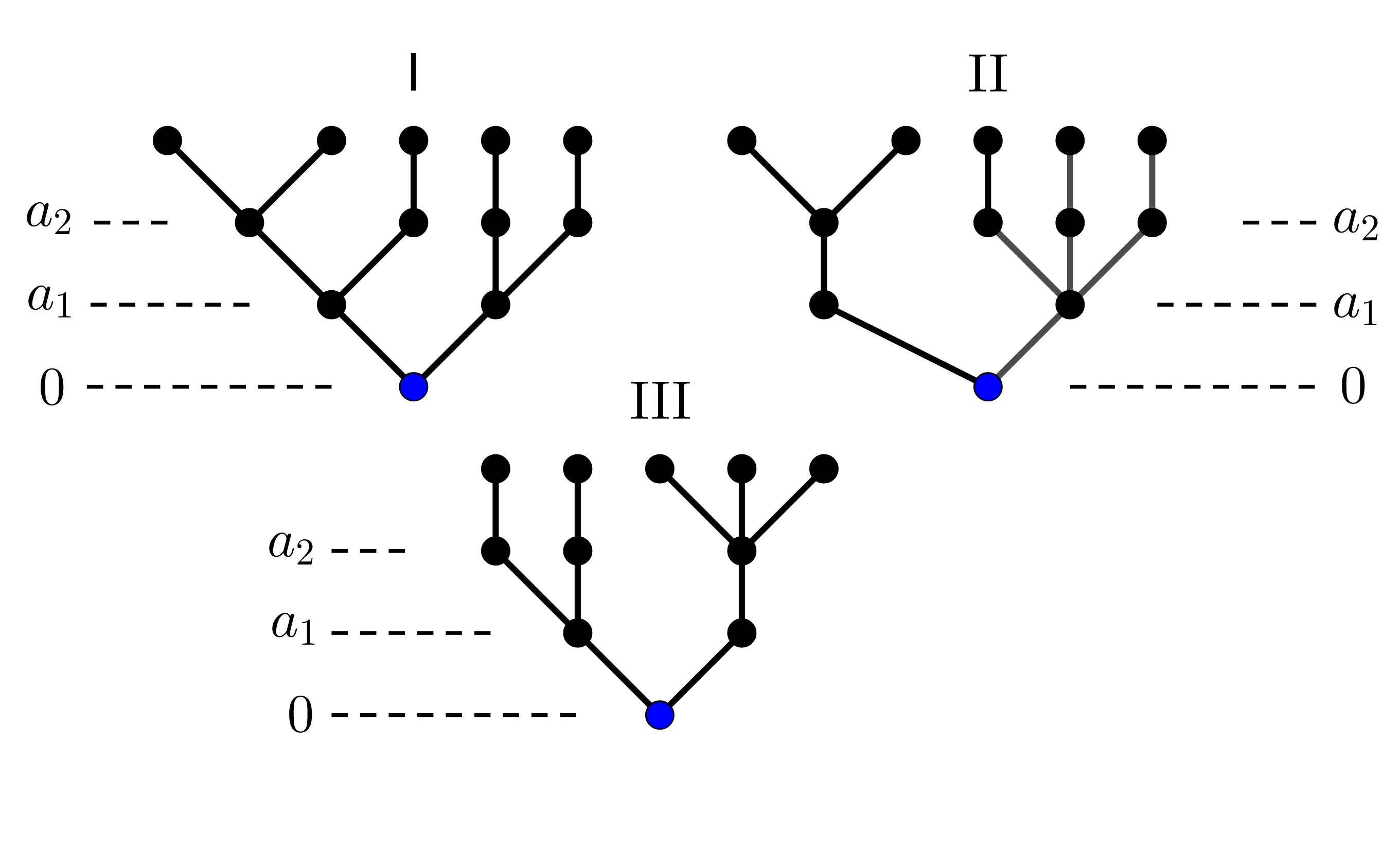}}
\caption{\label{TreesFiveLeaves}
The three marked tree filtrations on five leaves in Example \ref{ExampleTreesFiveLeaves}. The branch heights are the $a_{i}$. 
} 
\end{figure}
 The branch heights are $0$, $a_{1}$ and $a_{2}$. For $c>a_{2}$, we have that the maximal $c$-trivial subtree is just the tree itself. For $c=0$, we have that any maximal $c$-trivial subtree is given by restricting to a single leaf. For $c\in(0,a_{1}]$ in all cases the maximal $c$-trivial subtree is given by restricting to two leaves from the different branches. If $c\in(a_{1},a_{2}]$, then in the first two cases we have maximal $c$-trivial subtrees of order $4$ and in the last case we have maximal $c$-trivial subtrees of order $3$. Note however that all of these subtrees are non-isomorphic as marked tree filtrations. We thus see that these trees are isomorphic up to height $a_{1}$, but not up to any greater height. 
\end{exa}

\begin{rem}\label{ExtendingIsomorphisms}
Let $\phi$ be a marked tree filtration and suppose that $\phi$ has no branching at $c$. A maximal $c$-trivial subtree then naturally extends to a maximal $c_{0}$-trivial subtree, where $c_{0}$ is the smallest branching height greater than $c$. Similarly, let $\phi$ and $\phi'$ be two marked tree filtrations with no branching at $c$ and let $c_{0}$ and $c'_{0}$ be their first branching heights greater than $c$ (if there is no further branching, set $c_{0}=c$ or $c'_{0}=c$). If $\phi$ and $\phi'$ are isomorphic up to height $c$, then they are also isomorphic up to height $\mathrm{min}\{c_{0},c'_{0}\}$. We will use this in the proof of Theorem \ref{MainThm1}. 


\end{rem}

\begin{mydef}\label{CombinatorialStructure}
{\bf{[Filtration structure]}} Let $\phi$ and $\phi'$ be two marked tree filtrations on $A$ with branching heights $0=a_{0}<a_{1}<...<a_{m}$ and $0=a'_{0}<a'_{1}<...<a'_{m'}$. We say that $\phi$ and $\phi'$ define the same filtration structure if
\begin{enumerate}
\item $m=m'$.
\item For every $i$, and for every $c\in{(a_{i},a_{i+1})}$ and $d\in(a'_{i},a'_{i+1})$, we have that $\phi(x,c)=\phi(y,c)$ if and only if $\phi'(x,d)=\phi'(x,d)$.
\end{enumerate}
We say that the filtration structures of the marked tree filtrations $\phi$ and $\phi'$ 
are isomorphic 
if the above hold for filtrations $\phi_{0}$ and $\phi'_{0}$ with $\phi\sim{\phi_{0}}$ and $\phi'\sim{\phi'_{0}}$. Here we also say that $\phi$ and $\phi'$ have the same {\it{filtration type}}.  
\end{mydef}

To put it differently, two marked tree filtrations $\phi$ and $\phi'$ have the same filtration type if 
we can transform $\phi$ into $\phi'$ by changing the lengths of the finite edges. 
We now define one more type of structure. Suppose that we have a marked tree filtration $\phi$ 
with rooted metric tree $T$. We can associate a canonical ordinary phylogenetic tree $G_{T}$ to $T$ (without any lengths on the edges) whose leaf set is $A\cup\{\infty\}$. 




\begin{mydef}
{\bf{[Phylogenetic structure]}}
Let $\phi$ be a marked tree filtration. We say that the finite marked tree $G_{T}$ is the phylogenetic tree associated to $\phi$. 
Furthermore, we say that two marked tree filtrations $\phi$ and $\phi'$ have the same phylogenetic structure if there is an isomorphism $G_{T}\to{G_{T'}}$ sending $\infty$ to $\infty'$.     
\end{mydef}

\begin{exa}
Consider the trees in Example \ref{ExampleTreesFiveLeaves}. 
\begin{figure}
\centering
\includegraphics[width=8cm]{{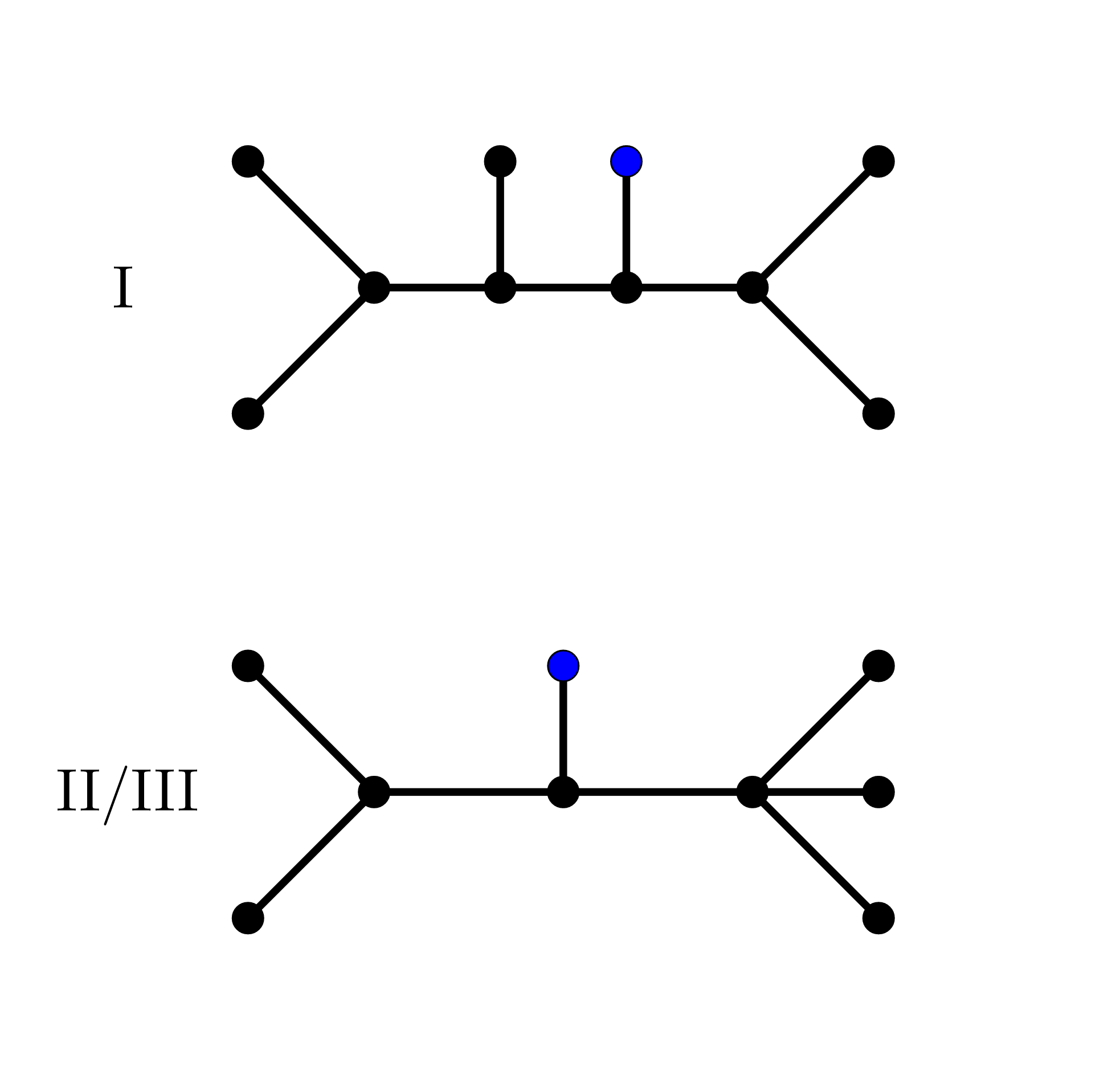}}
\caption{\label{PhylogeneticFiveLeaves}
The phylogenetic types of the trees in Example \ref{ExampleTreesFiveLeaves}. The blue leaf is the leaf corresponding to $\infty$. 
} 
\end{figure}
Their phylogenetic trees are given in Figure \ref{PhylogeneticFiveLeaves}. Note that the second and third tree define isomorphic phylogenetic types, even though their filtration types are different. 
In Section \ref{PolynomialsDegreeFive} we will see that this has consequences for our formulas of the edge lengths.  
\end{exa}

We will see in Section \ref{Section3} that the underlying graph of the skeleton of the curve $y^{n}=f(x)$ only depends on the phylogenetic structure of the tree associated to the roots of $f(x)$. To find the lengths, we will need the marked tree filtration associated to the roots of $f(x)$. 





\subsection{Algebraic invariants}\label{SectionQuasiInvariants}

Let $C=\mathbf{Z}[\alpha_{i}]$ be the polynomial ring in the variables $\alpha_{1},...,\alpha_{d}$. The group $S_{d}$ naturally acts through ring homomorphisms on this ring and this also gives an action on $C[x]$. We consider this action for the polynomial
\begin{equation}
f:=\prod_{i=1}^{d}(x-\alpha_{i}).
\end{equation}
Since every $\sigma$ acts as a ring homomorphism, we see that $\sigma(f)=f$. We now define the elementary symmetric polynomials $a_{i}$ through the equation
 $f=\sum_{i=0}^{d}(-1)^{i}a_{i}x^{i}$. Since $\sigma$ acts trivially on $f$, it also acts trivially on every $a_{i}$, so these are indeed invariant with respect to this action. We then have the following classical result on symmetric polynomials: 
\begin{lemma}\label{ElementarySymmetric}
$\mathbf{Z}[\alpha_{1},...,\alpha_{d}]^{S_{d}}=\mathbf{Z}[a_{0},...,a_{d-1}]$.
\end{lemma}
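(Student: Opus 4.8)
The plan is to prove the two inclusions separately. The inclusion $\mathbf{Z}[a_{0},\dots,a_{d-1}]\subseteq\mathbf{Z}[\alpha_{1},\dots,\alpha_{d}]^{S_{d}}$ is immediate from the observation already made in the text that $\sigma(f)=f$ for every $\sigma\in S_{d}$, which forces $\sigma(a_{i})=a_{i}$. All the content is in the reverse inclusion, which I would establish by the classical symmetrization algorithm, taking care that it runs integrally. First I would note that, up to the global sign $(-1)^{d}$, the coefficients $a_{0},\dots,a_{d-1}$ are precisely the elementary symmetric polynomials $e_{d},e_{d-1},\dots,e_{1}$ in $\alpha_{1},\dots,\alpha_{d}$ (here $e_{j}$ denotes the $j$-th elementary symmetric polynomial), so that $\mathbf{Z}[a_{0},\dots,a_{d-1}]=\mathbf{Z}[e_{1},\dots,e_{d}]$; it therefore suffices to prove that every $P\in\mathbf{Z}[\alpha_{1},\dots,\alpha_{d}]^{S_{d}}$ lies in $\mathbf{Z}[e_{1},\dots,e_{d}]$.

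I would then reduce to the case of a homogeneous symmetric $P$ by splitting an arbitrary $P$ into its homogeneous components, each of which is again symmetric with integer coefficients. Fix the lexicographic monomial order. If $P\neq 0$, its leading monomial $c\,\alpha_{1}^{k_{1}}\cdots\alpha_{d}^{k_{d}}$ has $k_{1}\geq k_{2}\geq\cdots\geq k_{d}$, since otherwise applying the transposition swapping two out-of-order exponents would produce a lexicographically larger monomial, contradicting symmetry of $P$. The key elementary computation is that the product
\[
e_{1}^{\,k_{1}-k_{2}}\,e_{2}^{\,k_{2}-k_{3}}\cdots e_{d-1}^{\,k_{d-1}-k_{d}}\,e_{d}^{\,k_{d}}
\]
has leading monomial exactly $\alpha_{1}^{k_{1}}\cdots\alpha_{d}^{k_{d}}$ with coefficient $1$ (this follows because the leading monomial of $e_{j}$ is $\alpha_{1}\cdots\alpha_{j}$ and lex order is multiplicative, and the exponents telescope). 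Hence $P-c\cdot e_{1}^{k_{1}-k_{2}}\cdots e_{d}^{k_{d}}$ is again in $\mathbf{Z}[\alpha_{1},\dots,\alpha_{d}]^{S_{d}}$, has the same total degree, and has strictly smaller leading monomial (or is $0$). Crucially, the $e_{j}$ enter here with the \emph{integer} coefficient $c$, so no division is ever needed; this is exactly the point where passing from a field to $\mathbf{Z}$ costs nothing.

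To finish I would argue termination: the monomials in $d$ variables of total degree at most $\deg P$ form a finite set, and the lexicographic order on a finite set admits no infinite strictly decreasing chain, so iterating the reduction reaches $0$ after finitely many steps. Collecting the subtracted terms $c\cdot e_{1}^{k_{1}-k_{2}}\cdots e_{d}^{k_{d}}$ then exhibits $P$ as an element of $\mathbf{Z}[e_{1},\dots,e_{d}]=\mathbf{Z}[a_{0},\dots,a_{d-1}]$. The only points demanding any care are the leading-monomial computation for the product of the $e_{j}$ and the bookkeeping that total degree does not grow; both are routine, and there is no genuine obstacle, since this is simply the integral form of the fundamental theorem of symmetric functions.
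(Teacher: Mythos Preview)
Your proof is correct: this is precisely the classical lexicographic symmetrization algorithm, and you have correctly identified the one point where integrality matters, namely that the leading coefficient of $e_{1}^{k_{1}-k_{2}}\cdots e_{d}^{k_{d}}$ is $1$ so that the subtraction step never requires division. The paper itself does not supply a proof of this lemma; it is stated as a ``classical result on symmetric polynomials'' and used as a black box, so there is no paper proof to compare against.

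As an aside, a genuinely different route is to first establish the equality on fraction fields (which is the field-theoretic fundamental theorem of symmetric functions) and then descend to the polynomial rings using that $\mathbf{Z}[\alpha_{1},\dots,\alpha_{d}]$ is integral over $\mathbf{Z}[a_{0},\dots,a_{d-1}]$ and that the latter is integrally closed (being a polynomial ring over $\mathbf{Z}$). Your constructive argument has the advantage of actually producing the expression in the $a_{i}$, which is what the paper needs in practice when it later computes explicit invariants.
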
 

We use this to construct a set of invariants. Let $K_{d}$ be the complete undirected graph on $d$ vertices. Here we identify the vertex set of $K_{d}$ with $\{\alpha_{1},...,\alpha_{d}\}$. Since $K_{d}$ is a simple graph, there is at most one edge between any two vertices. We write edges in $K_{d}$ as $\{\alpha_{i},\alpha_{j}\}$. Let $G$ be an subgraph of $K_{d}$ 
and let 
\begin{equation}
k:E(K_{d})\to\mathbf{Z}_{\geq{0}}
\end{equation}
be a weight function that is zero if and only if $e$ is not in $E(G)$. 
We refer to a pair $(G,k)$ as an {\it{edge-weighted graph}}.  
We now define 
\begin{equation}\label{PreInvariants}
I_{G,k}:=\prod_{e=\{\alpha_{i},\alpha_{j}\}\in{E(G)}}(\alpha_{i}-\alpha_{j})^{2k(e)}.
\end{equation}
We refer to elements of this form as pre-invariants. We will also write $[ij]=(\alpha_{i}-\alpha_{j})^2$ so that $I_{G,k}=\prod_{e\in{E(G)}}[ij]^{k(e)}$. 
Let $H_{G,k}:=\mathrm{Stab}(I_{G,k})$ be the stabilizer of $I_{G,k}$ under the action of $S_{d}$. Writing $\sigma_{1},...,\sigma_{r}$ for representatives of the cosets of $H_{G,k}$ in $S_{d}$, we then obtain the polynomial 
\begin{equation}
F_{G,k}:=\prod_{i=1}^{r}(x-\sigma_{i}(I_{G,k})).
\end{equation} 
This polynomial is invariant under the action of $S_{d}$. Indeed, $S_{d}$ acts by ring homomorphisms on $C[x]$ and every term $x-\sigma_{i}(I_{G,k})$ is sent to another $x-\sigma_{j}(I_{G,k})$ since the $\sigma_{i}$ are representatives of the cosets of $H_{G,k}$. 
Using Lemma \ref{ElementarySymmetric}, we now find that 
we can express its coefficients in terms of the $a_{i}$. 
 
 \begin{mydef}\label{QuasiInvariantsDefinition}
{\bf{[Algebraic invariants]}}
The polynomial $F_{G,k}$ is the generating polynomial for the pair $(G,k)$. Its coefficients are the algebraic invariants of $f$ with respect to the pair $(G,k)$. 
\end{mydef}

\begin{exa}
We note that the homogenized versions of these coefficients are not necessarily invariant with respect to the natural $\mathrm{SL}_{2}$-action. Indeed, if we consider $n=4$ and $G$ a graph of order $2$, then the only coefficient that is invariant is the constant coefficient, which is the discriminant of $f$. This follows from the criteria in \cite{sturmfels2008algorithms} and \cite{derksen2002computational}: 
 the non-constant coefficients 
are symmetric bracket polynomials that are not regular. The discriminant however is regular, so this does give an invariant. 
\end{exa}

\begin{rem}\label{RemarkStabilizer}
We now interpret the stabilizer $H_{G,k}$ graph-theoretically.   
We define a morphism of edge-weighted graphs $(G_{1},k_{1})\to(G_{2},k_{2})$ to be an injective morphism $\psi:G_{1}\to{G_{2}}$ of graphs such that $k_{2}\circ{\psi}=k_{1}$. An isomorphism of edge-weighted graphs is a morphism of edge-weighted graphs that is bijective. We then consider the set of edge-weighted graphs in $K_{d}$ isomorphic to $(G,k)$. The group $\mathrm{Aut}(K_{d})=S_{d}$ acts transitively on this set and the stabilizer of $(G,k)$ under this action is $H_{G,k}$. Indeed, we automatically have $\mathrm{Stab}(G,k)\subset{H_{G,k}}$, so let us prove the other inclusion. 

To do this, we use the fact that $\alpha_{i}-\alpha_{j}$ is an irreducible element of the unique factorization domain $\mathbf{Z}[\alpha_{i}]$ for every $i\neq{j}$. Suppose that $e=\{\alpha_{i},\alpha_{j}\}\in{E(G)}$ is sent to $\sigma(e)=\{\sigma(\alpha_{i}),\sigma(\alpha_{j})\}\notin{E(G)}$. 
Then one immediately finds that the factor $\sigma(\alpha_{i}-\alpha_{j})=\alpha_{\sigma(i)}-\alpha_{\sigma(j)}$ is not a divisor of $H_{G,k}$, 
a contradiction. We similarly obtain the statement in the case that the factor $\alpha_{i}-\alpha_{j}$ has a certain weight in $I_{G,k}$, we leave the details to the reader. 
Identifying orbits of $(G,k)$ with cosets of $H_{G,k}=\mathrm{Stab}(G,k)$ inside $S_{n}$, we now see that we can think of the 
$\sigma_{i}(I_{G,k})$ as isomorphism classes of $(G,k)$ inside $K_{d}$. 
 This point of view will be used throughout the upcoming sections. We will only be needing the case where $G$ is the complete graph on $n_{1}<d$ vertices and $k$ is some non-trivial weight function. In practice one might also want to use other edge-weighted graphs. 

\end{rem}




\subsubsection{Tropicalizing invariants}\label{SectionTropicalizingInvariants}

Let $C=\mathbf{Z}[\alpha_{i}]$ and $D=\mathbf{Z}[a_{i}]$ be as in the previous section. Consider the discriminant $\Delta=\prod_{i<j}(\alpha_{i}-\alpha_{j})^2$. In terms of the previous section, this is the invariant associated to the complete graph $K_{d}$. We can localize the above rings with respect to $\Delta$ to obtain
\begin{equation}
D[\Delta^{-1}]\to{C[\Delta^{-1}]}.
\end{equation} 
Now let $K$ be a field and let $\psi:D[\Delta^{-1}]\to{K}$ be a $K$-valued point. This corresponds to a separable polynomial $f\in{K[x]}$. It defines a prime ideal $\mathrm{Ker}(\psi)=\mathfrak{p}_{\psi}$ in the spectrum of $D[\Delta^{-1}]$. We now consider the fiber of $\mathfrak{p}_{\psi}$ under the map
\begin{equation}
\mathrm{Spec}(C[\Delta^{-1}])\to\mathrm{Spec}(D[\Delta^{-1}]).
\end{equation} 
The points in this fiber are all $\overline{K}$-rational and each of these corresponds to a labeling of the roots of $f$,  
see \cite[Expos\'{e} V, Proposition 1.1, Page 88]{SGA1} for instance. 

We now assume that $K$ is a complete, non-archimedean field and we let $\psi:D[\Delta^{-1}]\to{K}$ be a $K$-valued point, corresponding to a separable polynomial $f\in{K[x]}$. Note that the valuation on $K$ extends uniquely to $\overline{K}$. We fix an extension of $\psi$ to a $\overline{K}$-valued point of $C[\Delta^{-1}]$ and we denote this homomorphism by $\psi_{C}$. We then have a set of $n$ pairwise distinct elements $\{\psi_{C}(\alpha_{1}),...,\psi_{C}(\alpha_{d})\}$ and this gives a marked tree filtration $\phi$ by Example \ref{TreeBerkovich}. Note that if we choose another extension $\psi'_{C}$ of $\psi$, then this permutes the roots of $f$ in $\overline{K}$. The marked tree filtration $\phi'$ corresponding to $\{\psi'_{C}(\alpha_{1}),...,\psi'_{C}(\alpha_{d})\}$ is thus 
isomorphic to $\phi$.  

\begin{mydef}{\bf{[Marked tree filtration of a polynomial]}} \label{DefinitionTreePolynomial}
Let $\psi_{C}:C[\Delta^{-1}]\to\overline{K}$ be an extension of $\psi:D[\Delta^{-1}]\to{K}$. The marked tree filtration associated to $\{\psi_{C}(\alpha_{1}),...,\psi_{C}(\alpha_{n})\}$ in Example \ref{TreeBerkovich} is the marked tree filtration associated to $\psi_{C}$ and $f$. For any two extensions $\psi_{C}$ and $\psi'_{C}$ of $\psi$, the corresponding marked tree filtrations are isomorphic and we refer to this isomorphism class as the marked tree filtration associated to $f(x)$.    
\end{mydef} 

The above extension $\psi:C[\Delta^{-1}]\to{\overline{K}}$ gives a homomorphism of polynomial rings $C[\Delta^{-1}][x]\to{\overline{K}[x]}$ which we again denote by $\psi$. 
By applying the valuation map  \begin{equation}
\mathrm{val}:{\overline{K}}\to\mathbf{R}\cup\{\infty\}
\end{equation}  
to the coefficients of the polynomials $\psi(F_{G,k})$, we then obtain the tropical invariants.
\begin{mydef}{\bf{[Tropical invariants]}}\label{TropQuasiInvariants}
Consider the coefficients of the polynomial $\psi(F_{G,k})$. Their valuations are the tropical invariants associated to $\psi$ and $(G,k)$.
\end{mydef}

If we know the tropical invariants associated to a $\psi$ and an edge-weighted graph $(G,k)$, then we can recover the valuations of the $\psi(\sigma_{i}(I_{G,k}))$ using the classical Newton polygon theorem, see \cite[Chapter II, Proposition 6.3]{neu} for instance. 
The Newton polygon theorem moreover gives the multiplicities of these roots. 
The theorem we would like to prove is that we can recover the marked tree filtration associated to the roots, up to a permutation, from the tropical invariants. 

To phrase this more precisely, we consider the set $Z$ of all edge-weighted graphs on $K_{n}$ up to isomorphism and we define $x_{G,k}:=\mathrm{deg}(F_{G,k})+1$ to be the number of coefficients of $F_{G,k}$. Let $\mathbf{T}:=\mathbf{R}\cup{\{\infty\}}$ be the tropical affine line. Identifying $\overline{K}$-rational points of $D[\Delta^{-1}]$ with separable polynomials $f\in{\overline{K}[x]}$, we then obtain a map  
\begin{equation}
\mathrm{trop}:D[\Delta^{-1}](\overline{K})\to\prod_{(G,k)\in{Z}}\mathbf{T}^{x_{G,k}}
\end{equation}   
by mapping $f=\sum_{i=0}^{d}a_{i}x^{i}\in{\overline{K}[x]}$ to all its tropical invariants. Note that this last product is infinite. 
\begin{reptheorem}{MainThm1}
{\bf{[Main Theorem]}}
We have $\mathrm{trop}(f)=\mathrm{trop}(g)$ if and only if the marked tree filtrations corresponding to $f$ and $g$ are isomorphic. Furthermore, for any fixed degree $d$ we can find a finite number of edge-weighted graphs such that this holds. 
\end{reptheorem}

The idea of the proof is as follows. We first assign an edge-weighted graph to a maximal $c$-trivial structure of a marked tree filtration. We show that we can predict the minimum of this invariant and that the minimum is attained for another filtration if and only if a certain $c$-trivial structure is present in the marked tree filtration. We then suppose that the marked metric trees corresponding to $f$ and $g$ are not isomorphic and we consider the smallest branch point $c$ after which the $c$-structures are not isomorphic. We then use the invariants assigned above to obtain a contradiction.

\begin{rem}
Calculations in practice show that one does not need special edge-weighted graphs to distinguish between the marked tree filtrations of two polynomials: it suffices to take graphs $G$ with trivial functions $k(\cdot{}):E(K_{d})\to{\mathbf{Z}_{\geq{0}}}$. It might be that Theorem \ref{MainThm1} still holds for trivial edge-weighted graphs, but we have not been able to prove this. The same result using only the graph of order two does not hold, as we will see in Section \ref{SectionExamplesTrees}. 
\end{rem}

\subsection{Invariants for truncated structures}\label{SectionTruncatedStructures}

As before, let $K$ be a complete non-archimedean field. We start with 
a homomorphism $\psi:D[\Delta^{-1}]\to{K}$ giving a separable polynomial $\psi(f)\in{K[x]}$. We fix an extension of $\psi$ to $C[\Delta^{-1}]$, which means that we label the roots of $f$. The construction of the invariants will be independent of this choice.  

Consider the marked tree filtration $\phi$ attached to $\psi(f)$ as in Remark  
\ref{TreeBerkovich}. We let $c$ be a height for which $\phi$ has no branching and we take $L_{c}\subset{\{(\alpha_{i})\}}=:A$ to be a maximal $c$-trivial subtree with respect to $\psi$. We denote the branch heights of $L_{c}$ by 
$a_{0}<a_{1}<...<a_{m}$. Note that $a_{m}<c$ by assumption. We will also make use of the sequence $(b_{i})$ of branch differences. It is defined by $b_{i}:=a_{i}-a_{i-1}$ and $b_{0}=a_{0}$. 

Consider the complete graph $G$ on $L_{c}\subset{A}$ with weights $k_{0}(e)=1$ for every $e\in{E(G)}$. We will recursively assign new weights to the edges using the tree structure of $L_{c}$, the graph itself will not change. Let $e=\{\alpha_{i},\alpha_{j}\}\in{E(G)}$. We set $d_{e,\psi}:=\mathrm{val}(\psi(\alpha_{i}-\alpha_{j}))$. We will also denote this by $d_{e}$ if $\psi$ is clear from context. Let 
\begin{eqnarray}
R_{a_{i}}&=&\{e\in{E(G)}:d_{e}=a_{i}\},\\
S_{a_{i}}&=&\{e\in{E(G)}:d_{e}\geq{a_{i}}\},\\
\end{eqnarray}
If we set $c_{i,m}=\sum_{e\in{S_{a_{i}}}}{k(e)}=|S_{a_{i}}|$, then we can write  
\begin{equation}
\mathrm{val}(\psi(I_{G,1}))=\sum{2c_{i,m}b_{i}}.
\end{equation}
The value $c_{m,m}$ will be of special importance to us, so we assign a separate variable 
\begin{equation}
C_{m}:=c_{m,m}.
\end{equation}
We now consider the edges between leaves $\alpha_{i}$ and $\alpha_{j}$ of $L_{c}$ such that $\mathrm{val}(\alpha_{i}-\alpha_{j})=a_{m-1}$. In other words, $e=\{i,j\}\in{R_{a_{m-1}}}$. 
We set a new weight function
\begin{equation}
k_{1}(e)=\begin{cases}
C_{m}+1 & \text{ if }e\in{R_{a_{m-1}}}\\
k_{0}(e) &  \text{ if }e\notin{R_{a_{m-1}}}
\end{cases}
\end{equation}
By definition, we have that $k_{1}(e)\geq{k_{0}(e)}$ for every $e$, with a strict inequality for $e\in{R_{a_{m-1}}}$. 
This weight function defines a new pre-invariant $I_{G,k_{1}}$ by the construction in Section \ref{SectionQuasiInvariants}. If we write 
\begin{equation}
c_{i,m-1}:=\sum_{e\in{S_{a_{i}}}}k_{1}(e),
\end{equation} 
then we can express the valuation of the corresponding pre-invariant as 
\begin{equation}
\mathrm{val}(\psi(I_{G,k_{1}}))=\sum{2c_{i,m-1}b_{i}}.
\end{equation}
Note that $c_{m,m-1}=c_{m,m}=C_{m}$. As before, we define a separate variable for $c_{m-1,m-1}$:
\begin{equation}
C_{m-1}:=c_{m-1,m-1}.
\end{equation}

We now continue to the next branch layer: consider the edges defined by leaves $\alpha_{i}$ and $\alpha_{j}$ with $\mathrm{val}(\alpha_{i}-\alpha_{j})=a_{m-2}$. We set
\begin{equation}
k_{2}(e)=\begin{cases}
C_{m-1}+1 & \text{ if }e\in{R_{a_{m-2}}}\\
k_{1}(e) & \text{ if }e\notin{R_{a_{m-2}}}
\end{cases}
\end{equation}
Note that $k_{2}(e)>{k_{1}(e)}=1$ for $e\in{R_{a_{m-2}}}\subset{S_{a_{m-2}}}$ by definition of $C_{m-1}$.
As before, we obtain a new pre-invariant $I_{G,k_{2}}$ and using
\begin{equation}
c_{i,m-2}=\sum_{e\in{S_{a_{i}}}}k_{2}(e)
\end{equation}
we can write
\begin{equation}
\mathrm{val}(\psi(I_{G,k_{2}}))=\sum{2c_{i,m-2}b_{i}}.
\end{equation}
We define 
\begin{equation}
C_{m-2}:=c_{m-2,m-2} 
\end{equation}
and we have $c_{i,m-2}=c_{i,m-1}$ for $i\geq{m-1}$, and $c_{i,m-2}>c_{i,m-1}$ for $i<m-1$. Continuing in this way to the last layer, we obtain a weight function $k_{m}$ on $G$ and coefficients $C_{i}=c_{i,0}$ such that 
\begin{equation}
\mathrm{val}(\psi(I_{G,k_{m}}))=\sum{2C_{i}b_{i}}.
\end{equation}

\begin{rem}
The weights are related to the $C_{i}$ as follows. Consider two leaves $\alpha_{i}$ and $\alpha_{j}$ such that $\mathrm{val}(\alpha_{i}-\alpha_{j})=a_{m-i}$. Then 
\begin{equation}
k_{m}(e)=C_{m-(i-1)}+1.
\end{equation}
Similarly, if $\mathrm{val}(\alpha_{i}-\alpha_{j})=a_{i}$, then 
\begin{equation}
k_{m}(e)=C_{i+1}+1.
\end{equation}
\end{rem}

\begin{mydef}{\bf{[Invariants for marked tree filtrations]}} \label{InvariantsCTrivial}
Let $\psi:C[\Delta^{-1}]\to{\overline{K}}$ be a homomorphism with marked tree filtration $\phi$ and let $c$ be a height at which $\phi$ has no branching. The generating polynomials $F_{G,k_{m}}$ associated to the edge-weighted graph $(G,k_{m})$ constructed above are the invariants associated to $\psi$ and $c$. The value $M:=\mathrm{val}(\psi(I_{G,k_{m}}))$ is called the minimizing value. We will also use the notation $(G_{c},k_{c})$ for the edge-weighted graph $(G,k_{m})$. 
\end{mydef}

We will see in Proposition \ref{MainProposition4} that this value $M$ is indeed minimal among all the $\mathrm{val}(\psi(\sigma(I_{G,k_{m}})))$ for $\sigma\in{S_{d}}$. 

\begin{rem}\label{CombEquivInvariants}
The construction of the weight function is independent of the specific lengths $a_{0}<a_{1}<...<a_{m}$. That is, if we start with a different set of lengths $a'_{0}<a'_{1}<...<a'_{m}$ but the same filtration structure as in Definition \ref{CombinatorialStructure}, then the weights are the same.  The minimizing value however is dependent on the set of lengths.  
\end{rem}

\begin{exa}\label{PreInvariantsExample}
We calculate the invariants in Definition \ref{InvariantsCTrivial} for the trees in Example \ref{ExampleTreesFiveLeaves}. Here we now allow the lowest branch height $a_{0}$ to be nonzero. Let $a_{0}<c<a_{1}$. Then a maximal $c$-trivial subtree for any of the three trees consists of two leaves from two different branches. We thus consider the complete graph on two vertices with trivial weight function. In general, if a marked tree filtration has $r$ different branches at the root $v_{\infty}$, then the corresponding invariants are obtained by taking the complete graph on $r$ vertices with trivial weight function. The corresponding invariant in this case are obtained by symmetrizing $x-(\alpha_{1}-\alpha_{2})^2$. The minimizing value is $M=12a_{0}$ for all three filtration types. 


We now consider a height $a_{1}<c<a_{2}$. Consider a maximal $c$-trivial subtree for the marked tree filtration I. If we label the leaves from left to right, then we can take $\{1,3,4,5\}$. We start with the complete graph $G$ on these four vertices and with trivial weight function $k_{0}$. There are then two edges with $d_{e}=a_{1}$, namely $e_{1}=\{1,3\}$ and $e_{2}=\{4,5\}$. We thus have $C_{1}=2$. The next weight function $k_{1}$ is $1$ on $e_{1}$ and $e_{2}$, and it is $3$ on the other edges. The corresponding pre-invariant is given by
\begin{equation}
I_{G,k_{1}}=[13][45]([14][15][34][35])^3.
\end{equation}
Here $[ij]=(\alpha_{i}-\alpha_{j})^2$ as in Section \ref{SectionQuasiInvariants}. The minimizing value is $M=12a_{0}+2a_{1}$ and the stabilizer $H_{G,k_{1}}$ is the group generated by the permutation $(14)(35)$. The generating polynomial $F_{G,k_{1}}$ thus has degree $5!/2=60$. For the second marked tree filtration, we also take the $c$-trivial subtree $\{1,3,4,5\}$. We then have $C_{1}=3$, $k_{1}(\{1,3\})=k_{1}(\{1,4\})=k_{1}(\{1,5\})=4$ and $k_{1}(e)=1$ for the other edges. The corresponding pre-invariant is
\begin{equation}
I_{G,k_{1}}=[34][35][45]([13][14][15])^{4}
\end{equation}
and the minimizing value is $M=24a_{0}+6a_{1}$. The stabilizer $H_{G,k_{1}}$ is the group of permutations on $\{3,4,5\}$ and the degree of the generating polynomial is thus $5!/6=20$. We leave the calculation of the pre-invariant corresponding to the third marked tree filtration with $a_{1}<c<a_{2}$ to the reader. We finish this example by calculating the pre-invariant corresponding the first marked tree filtration with $c>a_{2}$. 
We start with the complete graph on $\{1,2,3,4,5\}$ and the trivial weight function $k_{0}$. We have $C_{2}=1$, $k_{1}(\{1,3\})=k_{1}(\{2,3\})=k_{1}(\{4,5\})=2$ and $k_{1}(e)=1$ for the other edges. To calculate $C_{1}=c_{m-1,m-1}=\sum_{e\in{S_{a_{1}}}}k_{1}(e)$, note that the edges in $S_{a_{1}}$ are $\{1,2\}$, $\{1,3\}$, $\{2,3\}$ and $\{4,5\}$, so $C_{1}=1+2+2+2=7$. We then have $k_{2}(e)=C_{1}+1=8$ for $e\in{R_{a_{0}}}$. The corresponding pre-invariant is thus given by
\begin{equation}
I_{G,k_{2}}=[12]([13][23][45])^{2}([14][15][24][25][34][35])^{8}.
\end{equation}
The minimizing value is given by $M=96a_{0}+6a_{1}+2a_{2}$. %
The stabilizer $H_{G,k_{2}}$ is the permutation group on $\{4,5\}$, so the generating polynomial $F_{G,k_{2}}$ has degree $5!/2=60$.


\end{exa}

We now prove some simple properties of the $C_{i}$. 

\begin{lemma}
$C_{m-(i-1)}<C_{m-i}$.
\end{lemma}
\begin{proof}
We have $C_{m-(i-1)}=\sum_{e\in{S_{a_{m-(i-1)}}}}k_{i-1}(e)$ and $C_{m-i}=\sum_{e\in{S_{a_{m-i}}}}k_{i}(e)$ by definition. Furthermore, $k_{i}(e)$ is defined by 
\begin{equation}
k_{i}(e)=\begin{cases}
C_{m-(i-1)}+1 & \text{ for } e\in{R_{a_{m-i}}}\\
k_{i-1}(e) & \text{ for }  e\notin{R_{a_{m-i}}}
\end{cases}.
\end{equation}
Since $k_{i-1}(e)=1$ for $e\in{R_{a_{m-i}}}$, we find that 
 $k_{i}(e)>k_{i-1}(e)$ for these edges and $k_{i}(e)=k_{i-1}(e)$ for $e\notin{R_{a_{m-i}}}$. Using $S_{a_{m-(i-1)}}\subset{S_{a_{m-i}}}$, we then directly see that $C_{m-i}>C_{m-(i-1)}$, as desired.   
 
\end{proof}

\begin{cor}\label{Decreasing}
The sequence $C_{i}$ is a strictly decreasing sequence. That is, if $j>i$, then $C_{j}<C_{i}$. 
\end{cor}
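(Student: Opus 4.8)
The plan is to deduce this immediately from the preceding Lemma by a one-line induction on the gap $j-i$. First I would rewrite the Lemma in a more convenient indexing: setting $k=m-i$, the statement $C_{m-(i-1)}<C_{m-i}$ reads $C_{k+1}<C_{k}$ for every index $k$ for which both $C_k$ and $C_{k+1}$ are defined, i.e. the terms of the sequence strictly decrease at each single step. This is the only content that is needed; the construction in Section~\ref{SectionTruncatedStructures} guarantees the $C_i$ are defined for $i=0,\dots,m$, so the inequality $C_{k+1}<C_k$ holds for $k=0,\dots,m-1$.

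Now suppose $j>i$. Applying the single-step inequality repeatedly gives the chain
\begin{equation}
C_{j}<C_{j-1}<\cdots<C_{i+1}<C_{i},
\end{equation}
and by transitivity of the order relation on $\mathbf{Z}_{\geq 0}$ we conclude $C_{j}<C_{i}$, as claimed. Formally this is an induction on $j-i\geq 1$: the base case $j-i=1$ is exactly the Lemma, and for the inductive step one combines $C_{j}<C_{j-1}$ (the Lemma) with $C_{j-1}<C_{i}$ (the inductive hypothesis, valid since $(j-1)-i<j-i$). I do not anticipate any genuine obstacle here; the only point requiring a moment's care is matching the index shift $k\mapsto m-k$ between the statement of the Lemma and the statement of the Corollary, which is purely notational.
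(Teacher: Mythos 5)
Your proof is correct and is exactly the argument the paper leaves implicit: the Corollary has no written proof in the paper because it is intended as an immediate consequence of the preceding Lemma, and your reindexing $k=m-i$ followed by chaining $C_{k+1}<C_k$ by induction on $j-i$ is the standard way to make that explicit. Nothing is missing and nothing differs in approach.
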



\begin{lemma}\label{WeightInequality}
If $e\notin{S_{a_{i}}}$, then $k_{m}(e)>C_{i}$. 
\end{lemma}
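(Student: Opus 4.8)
The plan is to unravel the recursive definition of the weight functions $k_0, k_1, \dots, k_m$ and to track precisely which edges receive which weight at each step. The key observation is that the weight of an edge $e = \{\alpha_s, \alpha_t\}$ is determined entirely by the branch height $d_e = \mathrm{val}(\alpha_s - \alpha_t)$, and that an edge's weight is only ever modified once: if $d_e = a_j$, then the weight of $e$ is changed (from $1$) to $C_{j+1} + 1$ at the step where we process layer $a_j$, and is left untouched at all later steps. This is exactly the content of the Remark immediately preceding the lemma, which I would invoke: if $\mathrm{val}(\alpha_s - \alpha_t) = a_j$ then $k_m(e) = C_{j+1} + 1$.

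With that formula in hand, the statement becomes a short computation. Suppose $e \notin S_{a_i}$, i.e.\ $d_e < a_i$. Write $d_e = a_j$ with $j < i$. Then $k_m(e) = C_{j+1} + 1$. Since $j < i$ we have $j + 1 \leq i$, so by Corollary \ref{Decreasing} (the sequence $C_\bullet$ is strictly decreasing) we get $C_{j+1} \geq C_i$, with equality only when $j+1 = i$. In either case $k_m(e) = C_{j+1} + 1 \geq C_i + 1 > C_i$, which is the claim. One should also handle the boundary case $j = 0$ (the lowest layer $R_{a_0}$): there the final weight is $C_1 + 1$, and since $0 < i$ forces $1 \leq i$, the same inequality $C_1 + 1 > C_i$ holds; if $i = 0$ the hypothesis $e \notin S_{a_0}$ is vacuous because every edge satisfies $d_e \geq a_0$, so there is nothing to prove.

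The only real subtlety — and the step I would be most careful about — is justifying the Remark's formula, i.e.\ that once an edge in layer $a_j$ has its weight set to $C_{j+1}+1$, no subsequent step alters it. This follows because the step processing layer $a_{j'}$ (for $j' < j$, since the construction descends through the layers) only modifies weights of edges in $R_{a_{j'}}$, and these sets are disjoint for distinct heights. So an edge of height $a_j$ is touched exactly at the step for layer $a_j$ and never again, giving final weight $C_{j+1}+1$ (with the convention that processing the top layer $R_{a_{m-1}}$ uses $C_m$, so an edge of height $a_{m-1}$ gets $C_m + 1 = C_{(m-1)+1}+1$, consistent with the formula). Given that bookkeeping, the inequality is immediate from Corollary \ref{Decreasing}.
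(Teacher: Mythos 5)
Your proof is correct and takes essentially the same approach as the paper: invoke the formula $k_m(e) = C_{j+1}+1$ for an edge in layer $R_{a_j}$ (the paper's Remark), then compare $C_{j+1}$ with $C_i$ via Corollary \ref{Decreasing}. The paper splits into the two cases $i=j+1$ and $i>j+1$ where you observe $C_{j+1}\geq C_i$ in one stroke, but the content is identical; your extra discussion of why each edge's weight is modified exactly once is a sensible unpacking of the same bookkeeping.
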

  \begin{proof}
  Suppose that $e\in{R_{a_{j}}}$ for $j<i$. If $i=j+1$, then $k_{m}(e)=C_{j+1}+1=C_{i}+1>C_{i}$. Suppose now that $i>j+1$. Then $k_{m}(e)=C_{j+1}+1>C_{i}+1>C_{i}$ by Corollary \ref{Decreasing}. This concludes the proof. 
  \end{proof}

\subsubsection{Comparing invariants}

We now consider two homomorphisms $\psi$ and $\psi'$ with marked tree filtrations $\phi$ and $\phi'$. We assume that $\phi$ and $\phi'$ are isomorphic up to height $a_{m}$. This in particular implies that they have the same number of branches at every $a_{i}\leq{a_{m}}$. Let $c$ be slightly larger than $a_{m}$, so that neither $\psi$ nor $\psi$ has branching between $a_{m}$ and $c$. For this $c$, we consider the polynomial $F_{G,k_{m}}$ associated to $\phi$ in Definition \ref{InvariantsCTrivial}. By Remark \ref{RemarkStabilizer}, a root of $\psi'(F_{G,k_{m}})$ corresponds to an isomorphic edge-weighted graph $(G',k')$. We write 
\begin{equation}
\sigma:G\to{G'}
\end{equation} 
for the isomorphism. Note that every edge $e$ in $K_{n}$ is a pair $\{\alpha_{i},\alpha_{j}\}$. We then set 
\begin{eqnarray*}
d_{e,\psi}&:=&\mathrm{val}(\psi(\alpha_{i}-\alpha_{j})),\\
d_{e,\psi'}&:=&\mathrm{val}(\psi'(\alpha_{i}-\alpha_{j})).
\end{eqnarray*}
Furthermore, we define 
the analogue of $S_{a_{i}}$ for $G'$ and $\psi'$ as follows:
\begin{equation}
T_{s}=\{e\in{E(G')}:d_{e,\psi'}\geq{s}\}.
\end{equation}
Here we allow $s$ to be greater than $a_{m}$. If we adopt the same notation for $G$ and $\psi$, then $S_{c}=\emptyset$ 
since $L_{c}$ is a maximal $c$-trivial tree. Setting
\begin{equation}
r_{s}=\sum_{e'\in{T_{s}}}k'(e'),
\end{equation}
we then have the formula
\begin{equation}
\mathrm{val}(\psi'(I_{G',k'}))=\sum{2r_{s}b_{s}},
\end{equation}
where the sum is over all branch heights $s$ of $\phi'$.

  \begin{lemma}\label{LevelEquality0b}
Suppose that there exists an $e'\in{T_{a_{i}}}$ such that $\sigma^{-1}(e')\notin{S_{a_{i}}}$. Then $r_{a_{i}}>C_{i}$. Suppose that for some $s>a_{m}$, there exists an $e'\in{T_{s}}$ such that $\sigma^{-1}(e')\notin{S_{s}}$. Then $C_{s}=0<r_{s}$.   
\end{lemma}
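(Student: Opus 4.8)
The plan is to prove both implications from the definitions of the weight function $k_m$ and the sets $S_s$, using the elementary properties of the $C_i$ recorded in Corollary \ref{Decreasing} and Lemma \ref{WeightInequality}. The key observation is that $r_{a_i} = \sum_{e' \in T_{a_i}} k'(e')$ and that $\sigma$ identifies $k'(e') = k_m(\sigma^{-1}(e'))$, so everything can be pulled back to edges of $G$ with weights $k_m$. Since $\sigma$ is a bijection on edges, summing $k_m$ over $\sigma^{-1}(T_{a_i})$ gives exactly $r_{a_i}$.

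First I would handle the case $s = a_i \leq a_m$. The quantity $C_i = c_{i,i} = \sum_{e \in S_{a_i}} k_m(e)$ (one should check $k_i$ and $k_m$ agree on $S_{a_i}$, which follows because subsequent steps only modify weights on $R_{a_j}$ for $j < i$, and those edges lie outside $S_{a_i}$; this is essentially the monotonicity observations $c_{i,m-2} = c_{i,m-1}$ for $i \geq m-1$ etc. already used). Now write $r_{a_i} = \sum_{e' \in T_{a_i}} k_m(\sigma^{-1}(e'))$. The plan is to split $T_{a_i}$ into those $e'$ with $\sigma^{-1}(e') \in S_{a_i}$ and those with $\sigma^{-1}(e') \notin S_{a_i}$. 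For the first group, since $\sigma^{-1}$ is injective, their images under $\sigma^{-1}$ form a subset of $S_{a_i}$, contributing at most $C_i$ — but actually I want a lower bound, so instead I bound $r_{a_i}$ from below: $r_{a_i} \geq \sum_{e' \in T_{a_i},\ \sigma^{-1}(e') \in S_{a_i}} k_m(\sigma^{-1}(e')) + k_m(\sigma^{-1}(e'_0))$ where $e'_0$ is the hypothesized edge with $\sigma^{-1}(e'_0) \notin S_{a_i}$. The subtlety is that the first sum need not equal $C_i$ unless $\sigma$ maps $S_{a_i}$ bijectively onto a subset of $T_{a_i}$. The cleaner route: since $\phi$ and $\phi'$ are isomorphic up to height $a_m$ and have the same branch structure, $|S_{a_i}| = |T_{a_i}|$ more generally $\sigma$ induces a bijection between $S_{a_i}$-type sets at the level of the truncated structure — I would argue that $\sigma(S_{a_i}) \subseteq T_{a_i}$ fails in general but that the \emph{total weight} comparison still works by a counting/pigeonhole argument: $\sigma^{-1}(T_{a_i})$ has the same cardinality as $S_{a_i}$, so if it is not equal to $S_{a_i}$ it must contain an edge $e \notin S_{a_i}$, and by Lemma \ref{WeightInequality} that edge has $k_m(e) > C_i$ while all edges of $S_{a_i}$ have weight $\geq 1$; combining, $r_{a_i} = \sum_{e \in \sigma^{-1}(T_{a_i})} k_m(e) > |S_{a_i}| \cdot 1 \geq$ ... this still needs care. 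The honest statement I would actually prove: writing $\sigma^{-1}(T_{a_i}) = U \sqcup W$ with $U \subseteq S_{a_i}$ and $W \cap S_{a_i} = \emptyset$, we get $r_{a_i} = \sum_U k_m + \sum_W k_m \geq |U| + |W|\cdot(C_i+1)$; since by hypothesis $W \neq \emptyset$ and $|U| + |W| = |T_{a_i}| = |S_{a_i}|$ (equal cardinalities, from the isomorphism up to height $a_m$), we get $r_{a_i} \geq |S_{a_i}| - |W| + |W|(C_i+1) = |S_{a_i}| + |W| C_i \geq C_i + |S_{a_i}| > C_i$ using $C_i \geq 0$ and $|S_{a_i}| \geq 1$ (indeed $|S_{a_i}| \geq C_i$ is false in general, but $|S_{a_i}| \geq 1$ suffices, combined with $|W|C_i \geq C_i$). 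Wait — I need $|S_{a_i}| + |W|C_i > C_i$; since $|W| \geq 1$ this gives $\geq |S_{a_i}| + C_i > C_i$ because $|S_{a_i}| > 0$. That closes it.

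Second, the case $s > a_m$: here $S_s = \emptyset$ because $L_c$ is a maximal $c$-trivial subtree with $c > a_m > $ all branch heights of $L_c$, so no edge of $G$ has $d_{e,\psi} \geq s$; hence $C_s$ (in the sense $\sum_{e \in S_s} k_m(e)$) equals $0$. By hypothesis there is $e' \in T_s$ with $\sigma^{-1}(e') \notin S_s$ — automatic since $S_s = \emptyset$ — but the content is that $T_s \neq \emptyset$, so $r_s = \sum_{e' \in T_s} k'(e') \geq |T_s| \geq 1 > 0 = C_s$. This half is immediate.

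The main obstacle is the bookkeeping in the first part: making precise that $|S_{a_i}| = |T_{a_i}|$ (or at least that $\sigma^{-1}(T_{a_i})$ and $S_{a_i}$ have equal cardinality), which rests on $\phi$ and $\phi'$ being isomorphic up to height $a_m$ — so the branch-partition at each height $a_i \leq a_m$ agrees, and $S_{a_i}$ is precisely the edge set of the sub-forest on the $a_i$-branches, whose total edge count depends only on the filtration structure up to that height. I would state this as a short lemma or inline remark before the main estimate. Everything else is the monotonicity of the $C_i$ (Corollary \ref{Decreasing}) and Lemma \ref{WeightInequality}, already available. I would also double-check the degenerate sub-case where $|S_{a_i}| = 0$, but since there is an edge $e' \in T_{a_i}$ and cardinalities match, $T_{a_i} = \emptyset$ too, contradicting the existence of $e'$; so $|S_{a_i}| \geq 1$ always holds under the hypothesis, and the strict inequality $r_{a_i} > C_i$ follows.
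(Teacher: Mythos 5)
Your second part (the case $s>a_m$) is fine and agrees with the paper's argument. The first part, however, takes an unnecessary and flawed detour that obscures how short the argument really is.

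The claim $|T_{a_i}| = |S_{a_i}|$ is \emph{not} justified and is in fact false in general. The graph $G'$ is an arbitrary isomorphic copy of the edge-weighted graph $(G,k_m)$ inside $K_d$ — it is one of the $\sigma_i$-translates arising from the cosets of $H_{G,k_m}$. Its vertex set need not align in any controlled way with the branch structure of $\phi'$, so the number of edges of $G'$ with $\psi'$-distance $\geq a_i$ can be anything; the isomorphism of $\phi$ and $\phi'$ up to height $a_m$ constrains the abstract trees, not the embedding of this particular $G'$. Since your final inequality leans on $|S_{a_i}| \geq 1$, which you derive from this cardinality match, the argument as stated has a gap.

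The gap is easily repaired, and in fact you already have everything you need. Your bound $r_{a_i} = \sum_U k_m + \sum_W k_m \geq |U| + |W|(C_i+1)$ together with $|W| \geq 1$ gives $r_{a_i} \geq 0 + 1\cdot(C_i+1) = C_i + 1 > C_i$ with no reference to $|S_{a_i}|$ or $|T_{a_i}|$ at all. The paper's proof is even leaner: the single hypothesized edge $e'$ has $k'(e') = k_m(\sigma^{-1}(e'))$ by weight-preservation, and $\sigma^{-1}(e') \notin S_{a_i}$ forces $k_m(\sigma^{-1}(e')) > C_i$ by Lemma \ref{WeightInequality}; since $r_{a_i}$ is a sum of nonnegative terms one of which already exceeds $C_i$, the conclusion is immediate. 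Drop the cardinality claim, keep the $U \sqcup W$ split only if you like it for exposition, and note that $|W| \geq 1$ alone carries the estimate.
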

\begin{proof}
It suffices to show that $k'(e')>C_{i}$ for this particular $e'$. Let $a_{j}=d_{e,\psi}$ for $e=\sigma^{-1}(e')$, so that $a_{j}<a_{i}$ by assumption. 
By Lemma \ref{WeightInequality}, we have $k_{m}(e)>C_{i}$. Since $\sigma$ is weight-preserving, we have $k_{m}(e)=k'(e')>C_{i}$, as desired. Note that the statement in the second part is equivalent to the existence of an $e'\in{T_{s}}$, since we automatically have $\sigma^{-1}(e')\notin{S_{s}}=\emptyset$. If this happens for some $s>a_{m}$, then $r_{s}$ is nonzero by definition and $C_{s}=0<r_{s}$, as desired. 

\end{proof}

\begin{lemma} \label{LevelEqualityb}
Let $i\leq{m}$. Suppose that for every $e'\in{T_{a_{i}}}$, we have $\sigma^{-1}(e')\in{S_{a_{i}}}$. Then $\phi'_{a_{i}}(v')=\phi'_{a_{i}}(w')$ if and only if $\phi_{a_{i}}(\sigma^{-1}(v'))=\phi_{a_{i}}(\sigma^{-1}(w'))$. Suppose that for every $e'\in{T_{c}}$, we have $e=\sigma^{-1}(e')\in{S_{c}}$. Then $T_{c}=\emptyset$.

\end{lemma}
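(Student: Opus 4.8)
The plan is to analyze how the isomorphism $\sigma\colon G\to G'$ of edge-weighted graphs interacts with the two marked tree filtrations, layer by layer. First I would set up notation: since $G$ is the complete graph on the maximal $c$-trivial subtree $L_c\subset A$ and $\sigma$ carries it to the complete graph $G'$ on the vertex set $\sigma(L_c)$, the vertices of $G'$ are a subset of the leaves $\{(\alpha_i)\}$ of $\phi'$, and I write $v' = \sigma(v)$. The key point is that $\sigma$ is weight-preserving, so $k'(\{v',w'\}) = k_m(\{v,w\})$ for all vertices $v,w$ of $L_c$, and by the final remark of Section \ref{SectionTruncatedStructures} the weight $k_m(\{v,w\})$ records exactly which branch layer $a_j$ the pair $v,w$ splits at within $\phi$: namely $d_{\{v,w\},\psi} = a_j$ forces $k_m(\{v,w\}) = C_{j+1}+1$, and conversely the function $j \mapsto C_{j+1}+1$ is injective by Corollary \ref{Decreasing}, together with the fact that edges with $d_e$ equal to the top branch height $a_m$ keep weight $1$ and edges not in $E(G)$ (none here, since $G$ is complete) are excluded. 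So from $k'$ alone we can read off $d_{e',\psi'}$ whenever $\sigma^{-1}(e')\in S_{a_i}$ — wait, that's the hypothesis direction; let me phrase the argument correctly.

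For the first assertion, fix $i\le m$ and assume every $e'\in T_{a_i}$ satisfies $\sigma^{-1}(e')\in S_{a_i}$. I want: $\phi'_{a_i}(v')=\phi'_{a_i}(w')$ iff $\phi_{a_i}(v)=\phi_{a_i}(w)$, i.e. the edge $\{v',w'\}$ has $d_{\cdot,\psi'}\ge a_i$ iff $\{v,w\}$ has $d_{\cdot,\psi}\ge a_i$. The reverse implication is the substantive one: suppose $\{v,w\}\in S_{a_i}$, so $d_{\{v,w\},\psi}\ge a_i$, and I claim $\{v',w'\}\in T_{a_i}$. If not, then $d_{\{v',w'\},\psi'}< a_i$; but then, by a counting argument, too many pairs would have to collapse. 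Concretely, within $\phi$ the set $S_{a_i}$ of edges consists precisely of the cliques on the $a_i$-branches of $L_c$, and $|S_{a_i}| = c_{i,m} $ is the number I called $c_{i,m}$ — more to the point, $r_{a_i} = \sum_{e'\in T_{a_i}} k'(e')$ and $C_i = \sum_{e\in S_{a_i}} k_m(e)$, and under the hypothesis $\sigma$ maps $T_{a_i}$ into $S_{a_i}$, so $r_{a_i}\le C_i$ by weight-preservation; since $\phi$ and $\phi'$ are isomorphic up to height $a_m\ge a_i$ they have the same number of $a_i$-branches of the same sizes, forcing $|T_{a_i}| = |S_{a_i}|$, hence (as $\sigma$ is injective on edges) $\sigma$ restricts to a bijection $S_{a_i}\to T_{a_i}$. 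This bijectivity is exactly the statement $\{v,w\}\in S_{a_i}\Leftrightarrow\{v',w'\}\in T_{a_i}$, and translating back via $\phi_{a_i}(v)=\phi_{a_i}(w)\Leftrightarrow d_{\{v,w\},\psi}\ge a_i$ gives the claimed equivalence. The second assertion is the degenerate case: since $L_c$ is maximal $c$-trivial, $S_c=\emptyset$, so $\sigma^{-1}(e')\in S_c$ is impossible unless $T_c=\emptyset$; thus the hypothesis "for every $e'\in T_c$, $\sigma^{-1}(e')\in S_c$" is vacuously the assertion $T_c=\emptyset$.

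The main obstacle I anticipate is justifying that $\sigma$ restricts to a \emph{bijection} $S_{a_i}\to T_{a_i}$ rather than merely an injection, which is what is needed to get the "if" direction of the biconditional. This requires combining (i) weight-preservation of $\sigma$, (ii) the fact from Lemma \ref{WeightInequality} and Corollary \ref{Decreasing} that weights strictly separate the branch layers, and (iii) the hypothesis that $\phi$ and $\phi'$ are isomorphic up to height $a_m$, so that the \emph{numbers} of edges in each layer match. Once the layerwise edge-count bookkeeping is pinned down, the translation between "edge in $S_{a_i}$", "$d_e\ge a_i$", and "$\phi_{a_i}(v)=\phi_{a_i}(w)$" is routine from Example \ref{TreeBerkovich} and the definition of branches. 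I would also double-check the edge case $i=m$ separately, since there the top-layer edges carry weight $1$ and one must make sure the argument does not secretly require a strict weight gap above $a_m$.
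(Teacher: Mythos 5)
Your overall strategy (show that $\sigma$ restricts to a bijection $S_{a_i}\to T_{a_i}$; the forward direction of the biconditional is the hypothesis, and the second assertion is vacuous because $S_c=\emptyset$) is sound and your treatment of the degenerate case matches the paper. The gap is in the pivotal step ``since $\phi$ and $\phi'$ are isomorphic up to height $a_m$ they have the same number of $a_i$-branches of the same sizes, forcing $|T_{a_i}|=|S_{a_i}|$.'' Being isomorphic up to height $a_m$ means the $a_m$-truncated structures of $\phi$ and $\phi'$ are isomorphic; it does \emph{not} give you any control over the restriction $\phi'|_{V(G')}$, because $V(G')=\sigma(V(G))$ is merely some subset of the leaves of $\phi'$ of the right cardinality — it is not a priori a maximal $a_m$-trivial subtree of $\phi'$. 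The weight function $k'$ on $G'$ records where the preimage edges $\sigma^{-1}(e')$ branch inside $\phi$; it tells you nothing directly about the values $d_{e',\psi'}$ that determine $T_{a_i}$. So the asserted equality $|T_{a_i}|=|S_{a_i}|$ (which is essentially equivalent to the conclusion of the lemma) is used without justification, and the weight machinery from Lemma \ref{WeightInequality} and Corollary \ref{Decreasing} that you invoke plays no role in closing this particular gap — it is the ingredient for the companion Lemma \ref{LevelEquality0b}, not this one.

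The paper closes the gap by counting \emph{branches}, not edges. Concretely: (a) by maximality of $L_c$, the restriction $\phi|_{V(G)}$ attains the full number of $a_i$-branches of $\phi$; (b) since $\phi$ and $\phi'$ are isomorphic up to height $a_m\ge a_i$, the full filtrations have the same number of $a_i$-branches, so the number of $a_i$-branches of $\phi'|_{V(G')}$ is at most that of $\phi|_{V(G)}$; (c) the hypothesis makes the assignment ``$a_i$-branch of $\phi'|_{V(G')}$ containing $v'$ $\mapsto$ $a_i$-branch of $\phi|_{V(G)}$ containing $\sigma^{-1}(v')$'' well-defined, and sends the $\ell$ branch representatives $i_1,\dots,i_\ell$ of $\phi|_{V(G)}$ to vertices in $\ell$ pairwise distinct branches of $\phi'|_{V(G')}$, giving the reverse inequality. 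Hence the branch map is a bijection, and unwinding this gives the biconditional directly (after which $|S_{a_i}|=|T_{a_i}|$ follows, rather than being assumed). If you want to keep your edge-counting formulation, you would need to run this branch-counting argument first and then observe that the bijection on branches together with the injective embedding of each $\phi'$-branch into its image $\phi$-branch forces equality of sizes because $|V(G)|=|V(G')|$; that is more work than the paper's version, which never needs $|S_{a_i}|=|T_{a_i}|$ at all.
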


\begin{proof}
Recall that the number of branches of a marked tree filtration $\phi$ at a certain height $s$ is the number of values attained by the restricted function $\phi_{s}$. Since $V(G)$ is a maximal $c$-trivial tree for $c>a_{i}$, we find that the restriction of $\phi:A\times{\mathbf{R}}\to\mathbf{R}$ to $V(G)$ also attains the same maximum number of values for $a_{i}$. Since $\phi$ and $\phi'$ are isomorphic up to height $a_{m}$, we find that they have the same number of branches at every height $s\leq{}a_{m}$. It follows that the number of branches at $a_{i}$ of the restriction of $\phi'$ to $V(G')$ is less than or equal to the number of branches of $\phi$ restricted to $V(G)$. Note now that the assumptions in the Lemma give a well-defined map from the set of $a_{i}$-branches of $\psi'$ to the set of $a_{i}$-branches of $\psi$. We claim that this map is a bijection.   
Indeed, let $i_{1},...,i_{\ell}\in{V(G)}$ be the representatives of the $a_{i}$-branches with respect to $\psi$ so that no pair of these is in $S_{a_{i}}$. Using the assumption in the lemma, we then see that $\sigma(i_{1}),...,\sigma(i_{\ell})$ are in different $a_{i}$-branches with respect to $\psi'$. 
 The number of $a_{i}$-branches in $V(G')$ with respect to $\psi'$ is thus greater than or equal to $\ell$. Since we already had the other inequality, it follows that they are equal and that the induced map of $a_{i}$-branches is a bijection. Translating this statement then directly gives the first part of the lemma. The remaining part about $T_{c}$ and $S_{c}$ also follows, since $S_{c}=\emptyset$.




\end{proof}




\begin{cor}\label{LevelEquality1b}
Suppose that we are in the situation described in Lemma \ref{LevelEqualityb} with height $s=a_{i}$. Then $r_{s}=C_{i}$. 
\end{cor}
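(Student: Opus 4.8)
The plan is to reduce the statement to the set equality $\sigma^{-1}(T_{a_i}) = S_{a_i}$ of edge sets; once this is established, $r_{a_i} = C_i$ drops out of the fact that $\sigma$ preserves weights. I would begin by recalling the two relevant identities. From the recursive construction in Section \ref{SectionTruncatedStructures} one has $C_i = c_{i,0} = \sum_{e \in S_{a_i}} k_m(e)$, while by definition $r_{a_i} = \sum_{e' \in T_{a_i}} k'(e')$. Since $\sigma : (G,k_m) \to (G',k')$ is an isomorphism of edge-weighted graphs, $k'(\sigma(e)) = k_m(e)$ for every $e \in E(G)$, so reindexing $e' = \sigma(e)$ gives $r_{a_i} = \sum_{e \in \sigma^{-1}(T_{a_i})} k_m(e)$, and it remains only to prove $\sigma^{-1}(T_{a_i}) = S_{a_i}$.

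One inclusion, $\sigma^{-1}(T_{a_i}) \subseteq S_{a_i}$, is precisely the standing hypothesis of Lemma \ref{LevelEqualityb} that is being assumed in the corollary. For the reverse inclusion I would take an edge $e = \{v,w\} \in S_{a_i}$, so $d_{e,\psi} \geq a_i$, which by the definition of the marked tree filtration attached to $\psi$ (Example \ref{TreeBerkovich}) is exactly the statement $\phi_{a_i}(v) = \phi_{a_i}(w)$, i.e. that $v$ and $w$ lie in the same $a_i$-branch. Applying the conclusion of Lemma \ref{LevelEqualityb} with $v' = \sigma(v)$ and $w' = \sigma(w)$ yields $\phi'_{a_i}(\sigma(v)) = \phi'_{a_i}(\sigma(w))$, i.e. $d_{\sigma(e),\psi'} \geq a_i$, i.e. $\sigma(e) \in T_{a_i}$; hence $S_{a_i} \subseteq \sigma^{-1}(T_{a_i})$. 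Combining the two inclusions gives $\sigma^{-1}(T_{a_i}) = S_{a_i}$, and therefore $r_{a_i} = \sum_{e \in S_{a_i}} k_m(e) = C_i$.

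I do not anticipate a genuine obstacle: the argument is essentially bookkeeping layered on top of Lemma \ref{LevelEqualityb}. The only point deserving a moment of care is the translation between the numerical condition $d_{e,\psi} \geq a_i$ and the combinatorial condition that the endpoints of $e$ share an $a_i$-branch of $\phi$ (and the analogue for $\psi'$), but this is immediate from the way the filtrations of $\psi$ and $\psi'$ were defined. As an alternative to invoking the full ``if and only if'' of Lemma \ref{LevelEqualityb}, one could instead observe that that lemma identifies the $\sigma$-image of the $a_i$-branch partition of $V(G)$ with the $a_i$-branch partition of $V(G')$, so the two partitions share the same multiset of block sizes and hence $|T_{a_i}| = |S_{a_i}|$; together with the inclusion $\sigma^{-1}(T_{a_i}) \subseteq S_{a_i}$ this already forces equality of the edge sets, and the same weight-preservation argument concludes.
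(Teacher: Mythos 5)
Your proof is correct and follows the paper's own argument, only spelled out in more detail: you make explicit the set equality $\sigma^{-1}(T_{a_i}) = S_{a_i}$ (the forward inclusion being the standing hypothesis, the reverse being the conclusion of Lemma~\ref{LevelEqualityb} read in terms of edges), and then the weight-preservation of $\sigma$ gives $r_{a_i}=C_i$ by reindexing the sum. This is exactly what the paper means by ``the edges in $S_{a_i}$ are exactly the $vw$ with $\phi_{a_i}(v)=\phi_{a_i}(w)$, we obtain $C_i=r_s$ from Lemma~\ref{LevelEqualityb} and the fact that $\sigma$ preserves weights.''
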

\begin{proof}
Indeed, by definition we have $C_{i}=\sum_{e\in{S_{a_{i}}}}k_{m}(e)$. 
Since the edges in $S_{a_{i}}$ are exactly the $vw$ with $\phi_{a_{i}}(v)=\phi_{a_{i}}(w)$, we obtain $C_{i}=r_{s}$ from Lemma \ref{LevelEqualityb} and the fact that $\sigma$ preserves weights. 
\end{proof}

\begin{lemma}\label{LevelEquality2a}
Suppose that $M'=M$. Then $\sigma$ induces an isomorphism of marked tree filtrations.
\end{lemma}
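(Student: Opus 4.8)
The idea is to run the same inductive comparison argument that produced the invariants, but now in the reverse direction: the equality $M' = M$ of minimizing values is a sum of the form $\sum 2 r_s b_s = \sum 2 C_i b_i$, and since each $r_s \geq C_i$ (or $r_s \geq 0 = C_s$ for heights $s > a_m$) by Lemma \ref{LevelEquality0b}, the equality forces term-by-term equality $r_{a_i} = C_i$ for every branch height $a_i \leq a_m$ and $r_s = 0$ for every $s > a_m$. First I would make precise why this term-by-term collapse is legitimate: the $b_i$ are strictly positive (for $i \geq 1$) and $b_0 = a_0$; one has to be a little careful about the case $a_0 = 0$, but the contributions of all layers are visibly non-negative given the inequalities just cited, so $M' = M$ leaves no slack anywhere. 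I would phrase this as: for each branch height $s$ of $\phi'$, either $s \leq a_m$ and then $r_s = C_i$ where $s = a_i$, or $s > a_m$ and then $r_s = 0$, meaning $T_s = \emptyset$.

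Next I would feed these equalities back into Lemmas \ref{LevelEquality0b} and \ref{LevelEqualityb}. The contrapositive of the first statement of Lemma \ref{LevelEquality0b} says: if $r_{a_i} = C_i$ then there is no $e' \in T_{a_i}$ with $\sigma^{-1}(e') \notin S_{a_i}$, i.e. every $e' \in T_{a_i}$ satisfies $\sigma^{-1}(e') \in S_{a_i}$. That is exactly the hypothesis of Lemma \ref{LevelEqualityb}, so for every $i \leq m$ we get that $\phi'_{a_i}(v') = \phi'_{a_i}(w')$ iff $\phi_{a_i}(\sigma^{-1}(v')) = \phi_{a_i}(\sigma^{-1}(w'))$; in other words $\sigma^{-1}$ identifies the $a_i$-branch structure of $\phi'$ (restricted to $V(G')$) with that of $\phi$ (restricted to $V(G)$) at every branch height up to $a_m$. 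Likewise, $r_s = 0$ for $s > a_m$ together with the second parts of the two lemmas gives $T_s = \emptyset$ for all $s > a_m$, which says that $\phi'$ restricted to $V(G')$ has no branching above $a_m$ — i.e. $V(G')$ is still a maximal $c$-trivial set for $\phi'$ for the chosen $c$ slightly above $a_m$.

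To conclude, I would assemble these facts into an isomorphism of marked tree filtrations. Since $\sigma : G \to G'$ is a bijection of leaf sets that respects the branch-partition at every branch height $a_0 < a_1 < \dots < a_m$, and since there is no branching strictly between consecutive $a_i$'s nor above $a_m$ (for $\phi$ by construction, for $\phi'$ by the $T_s = \emptyset$ conclusion), the two marked tree filtrations $\phi_{V(G)}$ and $\phi'_{V(G')}$ have the same filtration structure in the sense of Definition \ref{CombinatorialStructure}, with $\sigma$ realizing the combinatorial isomorphism. Finally I would pass from the restricted filtrations back to the full ones: $V(G)$ is a maximal $c$-trivial subtree of $\phi$ and, as just shown, $V(G')$ is a maximal $c$-trivial subtree of $\phi'$, and both $\phi$ and $\phi'$ have $a_m$ as their top branch height below $c$; so the isomorphism $\sigma$ of $c$-truncated structures, being an isomorphism that already matches all branch heights, extends (uniquely up to the irrelevant permutation within a branch) to an isomorphism of $\phi$ and $\phi'$ themselves, invoking Remark \ref{ExtendingIsomorphisms} to extend past $c$ if there is further branching. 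I expect the main obstacle to be bookkeeping rather than conceptual: one must keep straight which inequalities are strict, handle the degenerate layer $b_0 = a_0$ (possibly zero) without letting it absorb slack, and check that "isomorphic branch partitions at every branch height plus no branching in between" genuinely gives an isomorphism of marked tree filtrations and not merely of filtration structures — i.e. that the actual edge lengths $b_i$ match, which they do precisely because $M' = M$ forces $r_{a_i} = C_i$ at matching heights and hence the $b_i$ coincide layer by layer.
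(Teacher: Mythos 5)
Your core argument is correct and takes essentially the same route as the paper: use the strict positivity of the branch differences $b_i$ for $i\geq 1$ (together with $r_{a_0}=C_0$ always, which deals with the $b_0$ term — the real worry there is $a_0<0$, not $a_0=0$, but it evaporates since $T_{a_0}=E(G')$ and $S_{a_0}=E(G)$) to force $r_{a_i}=C_i$ for all $i\leq m$ and $r_s=0$ for $s>a_m$; then feed the contrapositive of Lemma \ref{LevelEquality0b} into Lemma \ref{LevelEqualityb} at every branch height, and into the second clauses of those lemmas for $s>a_m$, to match the branch partitions level by level and to rule out extra branching above $a_m$ among $V(G')$. That is precisely how the paper proceeds, and it works.

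Where you go astray is the final paragraph. The lemma asserts that $\sigma$ induces an isomorphism between the \emph{restricted} marked tree filtrations $\phi|_{V(G)}$ and $\phi'|_{V(G')}$ — i.e.\ that $V(G')$ is a $c$-trivial subtree of $\phi'$ isomorphic to the maximal $c$-trivial subtree $V(G)=L_c$ of $\phi$. It does \emph{not} claim, and it is not true, that $\sigma$ extends to an isomorphism of $\phi$ and $\phi'$ themselves. An isomorphism of $c$-truncated structures says nothing about what $\phi'$ does to leaves outside $V(G')$ or at heights above $c$; indeed $\phi'$ could have completely different branching there. What Remark \ref{ExtendingIsomorphisms} gives you, once you combine it with Proposition \ref{MainProposition4}, is only that $\phi$ and $\phi'$ are isomorphic \emph{up to a slightly larger height} — exactly the inductive step used in the proof of Theorem \ref{MainThm1}. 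Similarly, your claim that $V(G')$ is a \emph{maximal} $c$-trivial subtree of $\phi'$ is an overreach: you only get that it is a $c$-trivial subtree of the requisite size; maximality would require knowing $\phi'$ has no more than $n_1$ branches at $c$, which is addressed separately in the proof of the main theorem. So delete the final "pass to the full filtrations" paragraph; the argument you give before it already proves exactly what the lemma states.
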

\begin{proof}
By Lemmas \ref{LevelEquality0b} and \ref{LevelEquality1b}, we see that $M$ is the minimal value that can be attained. It occurs exactly when the conditions in Lemma \ref{LevelEqualityb} are satisfied for every height $s\in\{a_{i}\}$ and for $s=c$. 
We claim that this gives an isomorphism of marked tree filtrations. We have to check that $\phi_{s}(v)=\phi_{s}(w)$ if and only if $\phi'_{s}(\sigma(v))=\phi_{s}(\sigma(w))$ for every pair of vertices $v,w\in{V(G)}$ and every height $s$. These equalities do not occur for heights $s$ strictly greater than $a_{m}$ (since otherwise we would overshoot the minimum by Lemma \ref{LevelEquality0b}), so we only have to check them for heights $s\leq{a_{m}}$. For these the desired equalities follow from Lemma \ref{LevelEqualityb}, as desired. 



\end{proof}

\begin{prop}\label{MainProposition4}
Let $\psi,\psi':D[\Delta^{-1}]\to{K}$ be two homomorphisms with marked tree filtrations $\phi$ and $\phi'$ and branching heights $(a_{i})$ and $(a'_{i})$. Suppose that $\phi$ and $\phi'$
 are isomorphic up to height $a_{m}$ and let $c=a_{m}+\epsilon$ for $\epsilon>0$ small enough so that there is no branching between $a_{m}$ and $c$ for $\phi$ and $\phi'$. 
Let $L_{c}$ be a maximal $c$-trivial subtree of $\phi$ with edge-weighted graph $(G_{c},k_{c})$ and minimal value $M$. 
Then the slopes of the Newton polygon of $\psi'(F_{G_{c},k_{c}})$ are $\geq{M}$. The marked tree filtration $\phi'$ contains a $c$-trivial subtree isomorphic to the maximal $c$-trivial subtree of $\phi$ if and only if $M$ is attained.  

\end{prop}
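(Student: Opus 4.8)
The plan is to translate the statement into valuations of $S_d$-conjugates of the pre-invariant $I_{G_c,k_c}$ via the Newton polygon theorem, and then to assemble the four lemmas of this subsection. Recall that $F_{G_c,k_c}=\prod_{i=1}^{r}(x-\sigma_i(I_{G_c,k_c}))$, where $\sigma_1,\dots,\sigma_r$ are coset representatives of $H_{G_c,k_c}=\mathrm{Stab}(I_{G_c,k_c})$ in $S_d$, so that the roots of $\psi'(F_{G_c,k_c})$ run precisely over the $S_d$-orbit of $\psi'(I_{G_c,k_c})$. By the Newton polygon theorem the slopes of the Newton polygon of $\psi'(F_{G_c,k_c})$ are exactly the valuations $\mathrm{val}(\psi'(\sigma(I_{G_c,k_c})))$ for $\sigma\in S_d$, counted with multiplicity. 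Hence the first assertion is equivalent to $\mathrm{val}(\psi'(\sigma(I_{G_c,k_c})))\geq M$ for all $\sigma\in S_d$, and ``$M$ is attained'' means $M':=\min_{\sigma}\mathrm{val}(\psi'(\sigma(I_{G_c,k_c})))=M$.

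For the lower bound I would fix $\sigma$ and put $(G',k')=(\sigma(G_c),\sigma(k_c))$, an embedded edge-weighted graph isomorphic to $(G_c,k_c)$ via $\sigma$ (Remark \ref{RemarkStabilizer}). Expanding the valuation over the edges of $G'$ and reorganising it into layers indexed by the branch heights $a_0<\dots<a_m$ of $L_c$ (using that all root valuations are non-negative, so $d_{e',\psi'}\geq\sum_{i\,:\,a_i\leq d_{e',\psi'}}b_i$ for every edge $e'$) gives
\begin{equation}
\mathrm{val}(\psi'(\sigma(I_{G_c,k_c})))=\sum_{e'\in E(G')}2k'(e')\,d_{e',\psi'}\ \geq\ \sum_{i=0}^{m}2b_i\sum_{e'\in T_{a_i}}k'(e')=\sum_{i=0}^{m}2b_i\,r_{a_i}.
\end{equation}
For each $i$ there are two mutually exclusive possibilities: either every $e'\in T_{a_i}$ satisfies $\sigma^{-1}(e')\in S_{a_i}$, in which case $r_{a_i}=C_i$ by Corollary \ref{LevelEquality1b}; or some $e'\in T_{a_i}$ has $\sigma^{-1}(e')\notin S_{a_i}$, in which case $r_{a_i}>C_i$ by Lemma \ref{LevelEquality0b}. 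In either case $r_{a_i}\geq C_i$, and since the $b_i$ are non-negative we get $\mathrm{val}(\psi'(\sigma(I_{G_c,k_c})))\geq\sum_i 2b_i C_i=M$, proving the first assertion and the inequality $M'\geq M$.

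For the equivalence I would argue both directions. If $\phi'$ contains a $c$-trivial subtree on a vertex set $S'$ isomorphic as a marked tree filtration to the maximal $c$-trivial subtree $L_c$, pick $\sigma\in S_d$ restricting to such an isomorphism $V(G_c)\to S'$; since an isomorphism of marked tree filtrations preserves join heights (immediate from Definition \ref{TreeFiltrations}), one gets $d_{\sigma(e),\psi'}=d_{e,\psi}$ for every $e\in E(G_c)$, hence $\mathrm{val}(\psi'(\sigma(I_{G_c,k_c})))=\mathrm{val}(\psi(I_{G_c,k_c}))=M$, and with $M'\geq M$ this forces $M'=M$. Conversely, if $M'=M$, pick any $\sigma$ realising the minimum; by Lemma \ref{LevelEquality2a} this $\sigma$ induces an isomorphism of the marked tree filtrations carried by $V(G_c)$ and $V(G')=\sigma(V(G_c))$, and since $L_c$ is $c$-trivial its isomorphic copy $\phi'|_{V(G')}$ is $c$-trivial as well (all its join heights are at most $a_m<c$), giving the required subtree of $\phi'$.

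The main obstacle is the layer decomposition in the displayed inequality: the branch heights of $\phi'$ need not lie among the $a_i$, so the inequality genuinely discards the parts of the $d_{e',\psi'}$ lying above $a_m$ or strictly between consecutive $a_i$, and one must check that this discarding only ever weakens the bound — which is exactly why $b_i\geq 0$ enters — while being absent in the equality case. That last subtlety is already absorbed into Lemma \ref{LevelEquality2a}, whose proof incorporates the ``$s>a_m$'' clauses of Lemmas \ref{LevelEquality0b} and \ref{LevelEqualityb} (equivalently, the fact that $S_c=\emptyset$ for a maximal $c$-trivial subtree); in the forward direction of the equivalence it reappears in the explicit $\sigma$, for which $d_{\sigma(e),\psi'}=d_{e,\psi}\leq a_m$ leaves nothing to discard. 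Beyond that, one just has to keep the bookkeeping between $S_d$-conjugates, cosets of $H_{G_c,k_c}$, and embedded isomorphism classes of $(G_c,k_c)$ straight, so that ``the minimum is attained'' really is the same as ``$M$ occurs as a slope of the Newton polygon''.
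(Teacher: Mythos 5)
Your proof is correct and rests on the same four lemmas as the paper's. The one genuine deviation is in how you get the lower bound. The paper (implicitly) takes the \emph{exact} decomposition $\mathrm{val}(\psi'(I_{G',k'}))=\sum_s 2r_s b_s$ over the branch heights $s$ of $\phi'$ and bounds termwise; this is why Lemmas~\ref{LevelEquality0b} and~\ref{LevelEqualityb} carry separate clauses for $s>a_m$, and why one needs the branch heights of $\phi'$ below $a_m$ to match the $a_i$ before the two expressions are even comparable. You replace the exact decomposition by the per-edge inequality $d_{e',\psi'}\geq\sum_{i\,:\,a_i\leq d_{e',\psi'}}b_i$, indexed by the branch heights of $L_c$ alone, which discards the excess of each $d_{e',\psi'}$ above $a_m$ or between consecutive $a_i$ at the outset and gives the one-shot estimate $\sum_{e'}2k'(e')d_{e',\psi'}\geq\sum_{i=0}^m 2b_ir_{a_i}$. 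That is cleaner bookkeeping, at the price of a slope-by-slope sharpness you never use. One small inaccuracy: the parenthetical ``all root valuations are non-negative'' is not quite the reason your per-edge inequality holds at the bottom of the scale. What you actually need is $d_{e',\psi'}\geq a_0'=a_0$ (the first branch heights of $\phi$ and $\phi'$ agree, by the isomorphic-up-to-$a_m$ hypothesis) together with $a_0\geq 0$; that last positivity is also what lets you pass from $r_{a_i}\geq C_i$ to $2b_ir_{a_i}\geq 2b_iC_i$, and it is an implicit running assumption (as in Example~\ref{TreeBerkovich2}) that the paper's own lemmas share. The equality characterization you give matches the paper's: Lemma~\ref{LevelEquality2a} in one direction, and an explicit $\sigma$ realising the isomorphic copy (with $d_{\sigma(e),\psi'}=d_{e,\psi}$) in the other, where the paper just says ``similarly''.
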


\begin{proof}
By Lemmas \ref{LevelEquality0b} and \ref{LevelEquality1b} we obtain that $M$ is a lower bound for the slopes. If $M$ is attained, then we obtain the desired isomorphic subtree from Lemma \ref{LevelEquality2a}. If $\phi'$ contains an isomorphic copy of the maximal $c$-trivial subtree of $\phi$, then the calculations in the lemmas above similarly show that $M$ is attained. 
\end{proof}

\begin{rem}
We can in particular apply Proposition \ref{MainProposition4} to case where $\phi=\phi'$. It then says that $M$ is the minimal slope of the Newton polygon of $\psi(F_{G_{c},k_{c}})$, thus justifying the nomenclature in Definition \ref{InvariantsCTrivial}.   
\end{rem}

We can now prove our main theorem: 

\vspace{0.1cm}

\begin{proofMainThm1}
Let $a_{i}$ and $a'_{i}$ be the branching heights of $\phi$ and $\phi'$. We write $\psi$ and $\psi'$ for the corresponding homomorphisms from $D[\Delta^{-1}]$ to $\overline{K}$. Suppose without loss of generality that $a_{0}<a'_{0}$. Then the smallest slope in the Newton polygon of $\psi(F_{G_{2}})$ is $a_{0}$, which is different from $a'_{0}$, a contradiction. We conclude that $a_{0}=a'_{0}$. We now let $a_{m}$ be the largest height such that $a_{m}$-structures of $\phi$ and $\phi'$ are isomorphic. This exists since the $a_{0}$-structures are isomorphic.   

Let $n_{1}$ be the number of branches of $\phi$ at a height $c$ slightly larger than $a_{m}$ (so that there is no branching between $c$ and $a_{m}$). We   
 similarly define $n_{2}$ for $\phi'$. We suppose without loss of generality that $n_{1}\geq{n_{2}}$. If this inequality is strict, then we consider a maximal $c$-trivial tree of $\phi$ for $c>a_{m}$ with edge-weighted graph $(G_{c},k_{c})$. The slopes of $\psi'(F_{G_{c},k_{c}})$ are then larger than $M$, because otherwise $\phi'$ would contain a $c$-trivial subtree of order $n_{1}$ by Proposition \ref{MainProposition4}. Suppose now that $n_{1}=n_{2}$ and consider the same $(G_{c},k_{c})$. If $\psi'(F_{G_{c},k_{c}})$ would contain $M$ as a minimal slope, then $\phi'$ would be isomorphic to $\phi$ up to a larger height by Proposition \ref{MainProposition4} and Remark \ref{ExtendingIsomorphisms}, a contradiction. We conclude that $\mathrm{trop}(f)$ is not equal to $\mathrm{trop}(g)$. By construction, only finitely many invariants are needed here since the invariants are independent of the branching heights $a_{i}$. This finishes the proof.


\end{proofMainThm1}

\subsection{Newton half-spaces}
\label{SectionExamplesTrees}

In this section we study moduli of Newton polygons to write down explicit equations for the filtration types of trees with a fixed number of leaves. These moduli are described by intersections of polyhedral half-spaces. 
We explicitly work these half-spaces out for all polynomials of degree $d\leq{5}$. 

\begin{rem}
The variables $b_{i}$ were used in Section \ref{SectionTruncatedStructures} to denote the branch differences. In this section, they will be used to denote the tropical invariants introduced in Section \ref{SectionQuasiInvariants}. 
\end{rem}


We start by introducing notation for lines between two points.
Let $P=(z_{0},w_{0})$ and $Q=(z_{1},w_{1})$ be two points with coefficients in a field 
and assume that $z_{0}\neq{z_{1}}$. Define 
\begin{eqnarray*}
h_{0}(x)=\dfrac{x-z_{0}}{z_{1}-z_{0}},\\
h_{1}(x)=\dfrac{x-z_{1}}{z_{0}-z_{1}}.
\end{eqnarray*} 
Then the polynomial describing the line between $P$ and $Q$ is given by
\begin{equation}
h_{P,Q}(x):=w_{1}h_{0}(x)+w_{0}h_{1}(x).
\end{equation}
That is, the graph of $h_{P,Q}$ passes through $P$ and $Q$. We will use these polynomials to define half-space conditions for families of Newton polygons. 

For a fixed $r\in\mathbf{N}$, let $B=\mathbf{R}[b_{i}]$ be the polynomial ring in $r+1$ variables and let $S$ be the set of pairs $\{P_{i}\}=\{(i,b_{r-i})\}$. The reverse labeling will be explained in Example \ref{NewtonPolygonExample}. We view the $b_{r-i}$ as the valuations of the coefficients of some polynomial over a valued field. We now fix a pair ($P_{i},P_{j}$) in $S$ with $i\neq{j}$ and consider the polynomial $h_{P_{i},P_{j}}(x)\in{B[x]}$. Given any $k\in\{0,...,r\}$, we then set
\begin{equation}
\Delta_{k,P_{i},P_{j}}=b_{r-k}-h_{P_{i},P_{j}}(k).
\end{equation} 
We will write $\Delta_{k}$ for this linear polynomial if $P_{i}$ and $P_{j}$ are clear from context. It consists of at most $3$ monomials: $b_{r-i}$, $b_{r-j}$ and $b_{r-k}$. Suppose now that we are given $\psi({b})=(\psi(b_{i}))\in\mathbf{T}^{r+1}$, where $\mathbf{T}=\mathbf{R}\cup\{\infty\}$ is the tropical affine line. If $\psi(b_{r-i})$, $\psi(b_{r-j})$ and $\psi(b_{r-k})$ are in $\mathbf{R}$, then we can safely evaluate $\Delta_{k}$ at these values and obtain a value $\psi(\Delta_{k})\in\mathbf{R}$.  If one of these values is infinite, then we define $\psi(\Delta_{k})$ as follows: 
\begin{enumerate}
\item If $\psi(b_{r-i}),\psi(b_{r-j})\in\mathbf{R}$ and $\psi(b_{r-k})=\infty$, then we set $\psi(\Delta_{k})=\infty$.
\item If either $\psi(b_{r-i})$ or $\psi(b_{r-j})$ is $\infty$, then we set $\psi(\Delta_{k})=-1$.
\end{enumerate}
Our motivation for these will be given after the following definition.

\newpage
\begin{mydef}{\bf{[Newton half-spaces]}} \label{NPHalfSpace}
 Let $P_{i}$, $P_{j}$ and $\Delta_{k}$ be as above. 
Let $\mathbf{T}^{r+1}$ be the $r+1$-dimensional tropical affine line. 
We define $I(P_{i},P_{j})\subset\mathbf{T}^{r+1}$ to be the set of all $\psi({b})=(\psi(b_{i}))\in\mathbf{T}^{r+1}$ such that the following hold: 
\begin{enumerate}
\item For every integer $k$ with $i\leq{k}\leq{j}$, we have $\psi(\Delta_{k})\geq{0}$.
\item For every integer $k$ with $k<i$, we have $\psi(\Delta_{k})>0$. 
\end{enumerate}  
We call $I(P_{i},P_{j})$ the Newton half-space associated to $P_{i}$ and $P_{j}$. 

\end{mydef}


\begin{rem}\label{NotationRemark}
We will often write $b_{i}$ from now on instead of $\psi(b_{i})$ to ease notation. If a certain specific choice of $\psi(b_{i})$ is important, then we will use $\psi(b_{i})$. 

\end{rem}



\begin{rem}
The half-spaces conditions in Definition \ref{NPHalfSpace} are the local convexity conditions for the Newton polygons (lower convex hulls) associated to the points $P_{i}$. 
That is, the first condition says that  
if the line between $P_{i}$ and $P_{j}$ is a line segment of the Newton polygon, then every value in between lies above or on this line. The second condition says that the line segment stops at $P_{i}$. Note that these conditions by themselves are not enough to deduce the existence of line segments in Newton polygons. If we have a disjoint sequence of these conditions that completely cover (in the obvious sense) the options in $S$, then this does determine the Newton polygon. See Example \ref{NewtonPolygonExample} for instance. 
\end{rem}

\begin{rem}
Our definition of $\psi(\Delta_{k})$ in the infinite cases comes from the following observation.  
Suppose that we are given a polynomial $F=\sum_{i=0}^{r}c_{i}x^{i}\in{K[x]}$ over a non-archimedean field $K$ with Newton polygon $\mathcal{N}(F)$. 
We use the notation $\psi(b_{i})=\mathrm{val}(c_{i})$, $\psi({b})=(\psi(b_{i}))$, $\psi(P_{i})=(i,\psi(b_{r-i}))$. If $\mathcal{N}(F)$ has a segment $\ell=\psi(P_{i})\psi(P_{j})$ of finite slope, then we have $\psi({b})\in{I(P_{i},P_{j})}$.   
Let $F'=\sum_{i=0}^{r}c'_{i}x^{i}$ be another polynomial with $\psi({b}')=(\psi(b'_{i}))=(\mathrm{val}(c'_{i}))$. We now want to check using our definition of $I(P_{i},P_{j})$ if the line segment $\psi(P'_{i})\psi(P'_{j})$ is also present in the Newton polygon of $F'$. 
If one of the coefficients $\psi(b'_{r-i})$ or $\psi(b'_{r-j})$ is infinite, then these do not give rise to a line segment, so we should impose the condition $\psi({b}')\notin{I(P_{i},P_{j})}$. If $\psi(b'_{r-i})$ and $\psi(b'_{r-j})$ are finite and $\psi(b'_{r-k})$ is infinite, then $b'_{r-k}$ does not give an obstruction to $\psi(P'_{i})\psi(P'_{j})$ giving a line segment, so we should impose $\psi({b}')\in{I(P_{i},P_{j})}$. This is our motivation for defining $\psi(\Delta_{k})$ as above. 

\end{rem}

\begin{exa}\label{NewtonPolygonExample}
Suppose that we want to describe tuples 
$${b}=(b_{0},b_{1},b_{2},b_{3},b_{4})\in\mathbf{T}^{5}$$ that give a Newton polygon as in Figure \ref{NewtonPolygonExample2} (see Remark \ref{NotationRemark} for the notation).
\begin{figure}
\centering
\includegraphics[height=8cm]{{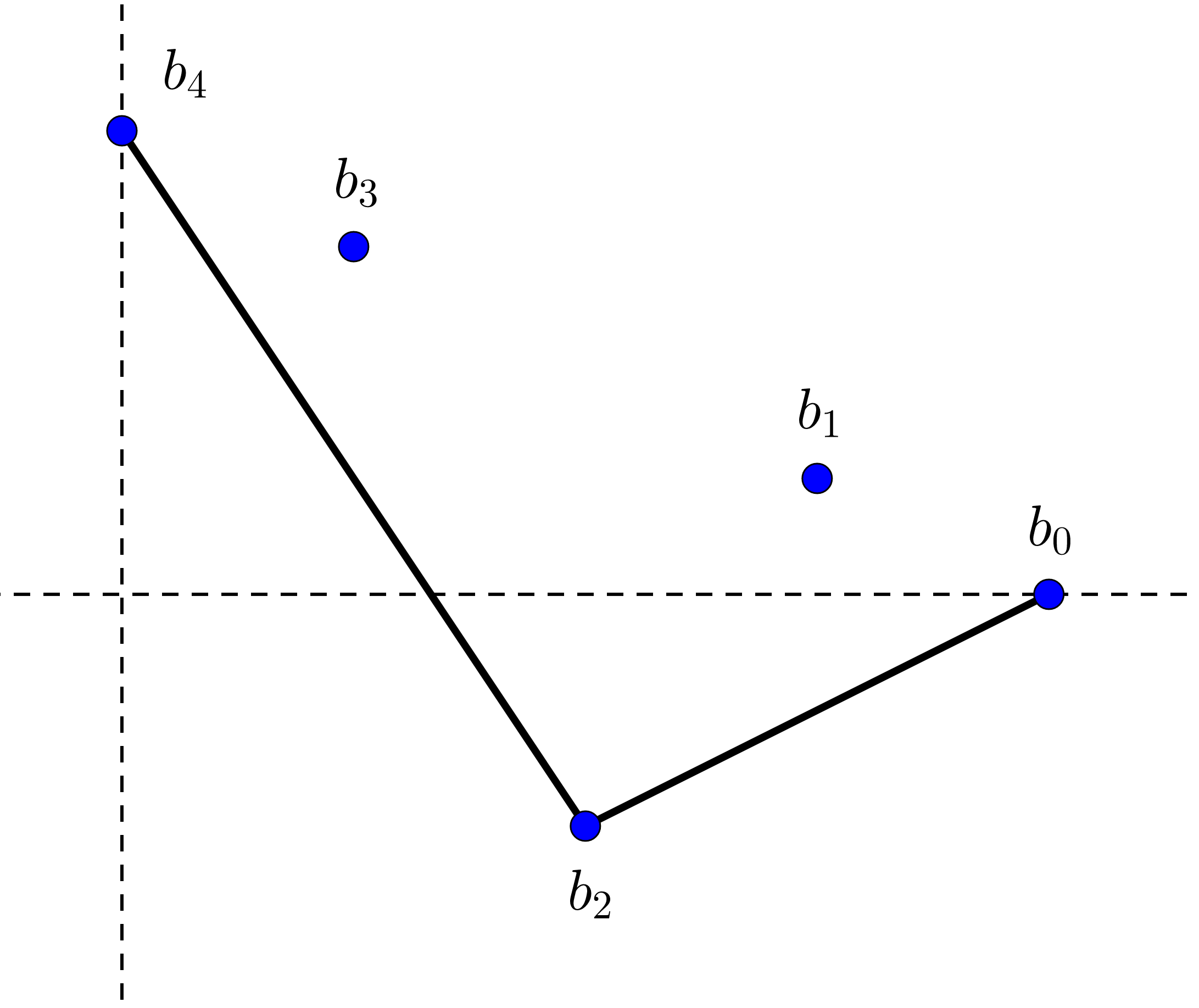}}
\caption{\label{NewtonPolygonExample2}
One of the Newton polygons in Example \ref{NewtonPolygonExample}. The dotted lines are the $x$ and $y$-axes. 
} 
\end{figure}
Then $b_{0},b_{2}$ and $b_{4}$ are finite, so 
$b_{0},b_{2},b_{4}\in\mathbf{R}$ (this would also come out of the equations we defined in Definition \ref{NPHalfSpace}). 
We first write down the polynomials $\Delta_{k,P_{2},P_{4}}$ for the pair $(P_{2},P_{4})$. The non-trivial ones are given by 
\begin{align*}
\Delta_{3,P_{2},P_{4}}=&b_{1}-1/2(b_{0}+b_{2}),\\
\Delta_{1,P_{2},P_{4}}=&b_{3}+1/2b_{0}-3/2b_{2},\\
\Delta_{0,P_{2},P_{4}}=&b_{4}-2b_{2}+b_{0}.
\end{align*} 
The nontrivial polynomial $\Delta_{1,P_{0},P_{2}}$ for the pair $(P_{0},P_{2})$ is then 
\begin{align*}
\Delta_{1,P_{0},P_{2}}=&b_{3}-1/2(b_{2}+b_{4}).
\end{align*}
We directly find that the set of tuples that describe a Newton polygon as in Figure \ref{NewtonPolygonExample2} is given by $I(P_{2},P_{4})\cap{I(P_{0},P_{2})}$. Explicitly, it is given by the equations 
\begin{align*}
b_{1}&\geq{1/2(b_{0}+b_{2})},\\
 b_{3}&>3/2b_{2}-1/2b_{0},\\
 b_{4}&>2b_{2}-b_{0},\\
 b_{3}&\geq{1/2(b_{2}+b_{4})}.
\end{align*} 
Here $(b_{0},b_{1},b_{2},b_{3},b_{4})\in\mathbf{R}\times\mathbf{T}\times\mathbf{R}\times\mathbf{T}\times\mathbf{R}$. 
Notice that if we assign $\mathrm{wt}(b_{i})=i$ and $\mathrm{wt}(m/n)=m/n$ for a fraction $m/n$, then the above hypersurfaces are homogeneous. This is the reason we defined $S$ as the set of $(i,b_{r-i})$ instead of the set of $(i,b_{i})$.
\end{exa}

\begin{mydef}\label{NewtonPolygonModuli}{\bf{[Moduli of Newton polygons]}}
Consider a vector $(\psi({b}_{i}))\in\mathbf{T}^{r+1}$, where at least $\psi(b_{0})$ and $\psi(b_{r})$ are finite. Write $\mathcal{N}(\psi(P_{i}))$ for the Newton polygon of the points $\psi(P_{i})$ and consider its essential vertices $\psi(P_{i_{1}})$, $\psi(P_{i_{2}})$,..., $\psi(P_{i_{t}})$. 
The moduli space of Newton polygons of type $\mathcal{N}(\psi(P_{i}))$ is then the intersection of the half-spaces 
$I(P_{i_{j}},P_{i_{j+1}})$. 
\end{mydef}

\begin{rem}
If we take another point $Q\in{I(P_{i_{j}},P_{i_{j+1}})}$ and repeat the above procedure, then one easily sees that we obtain the same intersection of half-spaces. The definition thus does not depend on the particular vector $(\psi(b_{i}))$ used in the beginning. 
\end{rem}

\begin{rem}
In this paper, the polygons we are interested are the Newton polygons of the generating polynomials $F_{G,k}$ defined in Section \ref{SectionQuasiInvariants}. These satisfy two additional conditions:
\begin{enumerate}
\item $b_{r}=0$,
\item $b_{0}\in\mathbf{R}$.
\end{enumerate} 
The first holds because the leading coefficient of $F_{G,k}$ is $1$. The second holds since the roots of $F_{G,k}$ are all nonzero. Indeed, the roots $\alpha_{i}$ of $f(x)$ are distinct, so $(\alpha_{i}-\alpha_{j})^{2}=[ij]\neq{0}$. This means that any pre-invariant, being a product of these $[ij]$, is also nonzero and thus $\psi(b_{0})\neq\infty$. 
We thus see that our tropicalization map from $D[\Delta](K)$ lands in the subspace $\mathbf{R}\times\mathbf{T}^{r-1}\times\{0\}\subset{}\mathbf{T}^{r+1}$.
\end{rem}

\subsubsection{Equations for filtration types}

In this section we give a set of equations that completely determine the filtration type of a separable polynomial.  To do this, we iteratively apply 
Proposition \ref{MainProposition4}, which tells us that we can determine tree types using line segments of minimal slope in the generating polynomials of the tropical invariants. 
We use a modification of the Newton half-spaces from the previous section to determine whether these minimal slopes are attained. 

We start with a preliminary observation on the number of filtration types (see Definition \ref{CombinatorialStructure}). 
\begin{lemma}\label{FiniteFiltration}
For any fixed number of leaves, the number of filtration types is finite.
\end{lemma}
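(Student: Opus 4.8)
The plan is to show that a filtration type is a discrete combinatorial datum drawn from a finite set, by encoding it as a finite sequence of set partitions satisfying a refinement condition. Fix a finite leaf set $A$ with $|A|=d$. Recall from Definition \ref{CombinatorialStructure} that a filtration type is an equivalence class of marked tree filtrations under the relation of "changing the lengths of the finite edges"; equivalently, two filtrations $\phi,\phi'$ with branching heights $0=a_0<a_1<\dots<a_m$ and $0=a_0'<\dots<a_{m'}'$ have the same type when $m=m'$ and, for each $i$, the partition of $A$ induced by the $c$-branch relation for $c\in(a_i,a_{i+1})$ agrees with the one induced for $d\in(a_i',a_{i+1}')$ (together with the identification under $\phi\sim\phi_0$). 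So the first step is simply to make this reformulation precise: a filtration type is determined by the finite ordered list of partitions $\Pi_0,\Pi_1,\dots,\Pi_m$ of $A$, where $\Pi_i$ is the partition into $c$-branches for $c$ in the open interval between consecutive branching heights $(a_i,a_{i+1})$ (with $a_{m+1}=\infty$).

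Next I would record the structural constraints these partitions satisfy, all of which are immediate from Definition \ref{TreeFiltrations}: by conditions (1) and (2), $\Pi_0$ is the one-block partition $\{A\}$; by condition (2) (monotonicity of the branch relation in $c$), each $\Pi_{i+1}$ is a strict refinement of $\Pi_i$; and by condition (3), $\Pi_m$ is the discrete partition into singletons. Strictness of the refinement at each step is exactly what it means for $a_1,\dots,a_m$ to be genuine branching heights (Definition \ref{QuasiInvariantsDefinition}... rather, the Branching definition): if $\Pi_{i+1}=\Pi_i$ there would be no branching at $a_{i+1}$, contrary to assumption. Hence the length $m$ of the chain is bounded: a strictly refining chain of partitions of a $d$-element set from the one-block partition to the discrete partition has length at most $d-1$, since the number of blocks strictly increases at each step from $1$ to $d$.

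Finally I would count. For each fixed $m\le d-1$, a filtration type of "depth" $m$ is a chain $\{A\}=\Pi_0\prec\Pi_1\prec\dots\prec\Pi_m=\{\text{singletons}\}$ of partitions of $A$, and there are only finitely many partitions of the $d$-element set $A$ (the Bell number $B_d$), hence only finitely many such chains of each length, hence only finitely many in total over all $m\le d-1$. Passing to isomorphism classes under bijections of $A$ (as in Definition \ref{CombinatorialStructure}, where one also allows $\phi\sim\phi_0$, $\phi'\sim\phi_0'$) only collapses this finite set further, so the number of filtration types on $d$ leaves is finite. The only point requiring a word of care — and the closest thing to an obstacle — is checking that the "same filtration structure" relation of Definition \ref{CombinatorialStructure} really is captured by the ordered list $(\Pi_i)$ up to relabeling, i.e.\ that no metric information beyond the combinatorics of these partitions survives; this is exactly the content of the sentence "two marked tree filtrations $\phi$ and $\phi'$ have the same filtration type if we can transform $\phi$ into $\phi'$ by changing the lengths of the finite edges," so it is a matter of unwinding the definitions rather than a genuine difficulty.
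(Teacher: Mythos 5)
Your proof is correct and follows the same underlying idea as the paper's two-sentence argument (finitely many branching heights, finitely many splitting options at each), but you make it rigorous by encoding a filtration type as a strictly refining chain of set partitions from $\{A\}$ to the singleton partition and observing that such a chain has length at most $d-1$, whence the count is bounded by a sum over chains in the (finite) partition lattice. This is more explicit than the paper's proof and in particular justifies the bound on the number of branching heights rather than merely citing the Branching definition; no gap.
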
  
\begin{proof}
Indeed, at every branching height there are only finitely many options for every branch to split. Since there are only finitely many branching heights, we see that the number of filtration types is finite.
\end{proof}

Suppose now that we have a filtration type, represented by a marked tree filtration $\phi$. If its branch heights are $\Lambda$-rational (where $\Lambda$ is the value group of $\overline{K}$), then we can find a separable polynomial $f(x)\in{\overline{K}[x]}$ with tree equal to $\phi$. Since we can freely change the branch heights in a filtration type, we see that we can represent any filtration type by a $\overline{K}$-rational polynomial. By Lemma \ref{FiniteFiltration}, there are only finitely many of these, so we can represent the filtration types by a finite set of polynomials $\mathcal{G}=\{f_{i}\}$. We write $\psi_{i}$ for the homomorphisms $D[\Delta^{-1}]\to\overline{K}$ corresponding to these. 

\begin{notation}{\bf{[Invariant generators of filtration types]}}
Let $f_{i}\in\mathcal{G}$ be a filtration type. For every height $c\in\mathbf{R}$, we obtain 
an edge-weighted graph and thus a generating polynomial  
using the construction in Section \ref{SectionTruncatedStructures}. 
By varying $c$, this gives finitely many edge-weighted graphs (since they only change at branch heights) and thus finitely many generating polynomials. 
We write $c_{i,j}$ for the $j$-th height corresponding to the polynomial $f_{i}$. 
We denote the corresponding edge-weighted graphs by $G_{i,j}$ and the generating polynomials by $F_{i,j}$. The degree of $F_{i,j}$ is written as $d(i,j)$. We furthermore write $G_{2}$ for the trivially weighted graph of order two with generating polynomial $F_{G_{2}}$ of degree $d({2})$.
\end{notation}

By mapping a polynomial $f(x)$ with corresponding homomorphism $\psi_{f}:D[\Delta^{-1}]\to{\overline{K}}$ to the valuations of the generating polynomials $\psi_{f}(F_{G_{2}})$ and $\psi_{f}(F_{i,j})$ for every $i,j$, 
we obtain a map 
\begin{equation}
\mathrm{trop}:D[\Delta^{-1}](\overline{K})\to{\mathbf{T}^{d({2})}}\times\prod_{i,j}\mathbf{T}^{d(i,j)}. 
\end{equation}  
A quick inspection of the proof of Theorem \ref{MainThm1} shows that the invariants used there are a subset of the ones we use here, so we find that this map is injective if we consider polynomials up to isomorphisms of marked tree filtrations. We would now like to distinguish between the different filtration types of the polynomials on the left-hand side using half-spaces on the right-hand side. To do this, it will be convenient to have some notation for the various projection maps involved. We write  
\begin{equation}
\pi_{i,j}:{\mathbf{T}^{d({2})}}\times{}\prod_{i,j}\mathbf{T}^{d(i,j)} 
\to{\mathbf{T}^{d(i,j)}}
\end{equation}
and 
\begin{equation}
\pi_{2}:{\mathbf{T}^{d({2})}}\times{}\prod_{i,j}\mathbf{T}^{d(i,j)}
\to\mathbf{T}^{d({2})}.
\end{equation}

We first a set of preliminary Newton half-spaces using $F_{G_{2}}$.
\begin{mydef}\label{HalfSpaceOrderTwo}{\bf{[Half-spaces of order two]}}
Let $f_{i}\in\mathcal{G}$ be a filtration type and let $G_{2}$ be the trivially weighted graph of order two. The Newton half-space $NH(i,2)$ for $f_{i}$ is the set of $Q\in{\mathbf{T}^{d({2})}}\times{}\prod_{i,j}\mathbf{T}^{d(i,j)}$ such that $\pi_{2}(Q)$ is in the Newton polygon moduli space 
associated to the valuations of $\psi_{i}(F_{G_{2}})$ in Definition \ref{NewtonPolygonModuli}. We also refer to these as the half-spaces of order two.  
\end{mydef}


We now consider a fixed filtration type represented by a polynomial $f_{k}\in\mathcal{G}$ with homomorphism $\psi_{k}$ and half-space $NH(k,2)$.  
 Let $\mathcal{G}_{k,0}\subset{\mathcal{G}}$ be the set of all polynomials $f_{i}$ such that $NH(i,2)=NH(k,2)$. 
 For each of these polynomials $f_{i}\in\mathcal{G}_{k,0}$, we find that they share the same number of branch heights. 
 Indeed, the number of branch heights is equal to the number of distinct line segments in the Newton polygon of $\psi_{i}(F_{G_{2}})$ and by assumption these are the same for $f_{i}\in\mathcal{G}_{k,0}$. Since filtration types are defined up to a changes in lengths, we can and do assume that the branch heights for all the $f_{i}\in\mathcal{G}_{k,0}$ are equal. This means that the Newton polygons of the polynomials $\psi_{i}(F_{G_{2}})$ are in fact equal. 

We now consider the height $c_{k,0}$ satisfying $a_{0}<c_{k,0}<a_{1}$. Here the $a_{i}$ are the branch heights of $f_{k}$. For $f_{i}\in{\mathcal{G}_{k,0}}$, we then consider the Newton polygon of $\psi_{i}(F_{i,0})$ together with the line segment of smallest slope. 
We write $m(i,0)$ for the horizontal length (or multiplicity) of this segment. 
The slope of the minimal line segment is the minimizing value by Proposition \ref{MainProposition4}. 
Note that we can express this minimizing value as a linear polynomial in the valuations of the coefficients of $\psi_{i}(F_{2})$, see Example \ref{PreInvariantsExample} for instance. We write $M_{i,0}$ for this linear function. 
For every $f_{i}$, we now have a half-space $I(P_{d(i,0)-m(i,0)}P_{d(i,0)})$ 
and a linear function $M_{i,0}$. We want to compare the value of this function to the coefficient $b_{m(i,0)}$ at $P_{d(i,0)-m(i,0)}$. We thus consider the modified function
\begin{equation}
\Delta(M_{i,0})=b_{m(i,0)}-M_{i,0}.
\end{equation} 
This function will only need to be evaluated at elements in $\mathbf{R}$. 

\begin{mydef}\label{PolyhedraHeightZero}
{\bf{[Generalized polyhedra up to finite heights I]}}
Write $\mathcal{G}_{k,0,>}\subset{\mathcal{G}_{k,0}}$ for the filtration types in $\mathcal{G}_{k,0}$ 
that have strictly more branches than $f_{k}$ at $c_{k,0}$. 
The generalized polyhedron for $f_{k}$ up to height $c_{k,0}$ consists of all points $Q$ in ${\mathbf{T}^{d({2})}\times\prod_{i,j}\mathbf{T}^{d(i,j)}}$ 
such that:
\begin{enumerate}
\item $Q\in{NH(k,2)}$ (see Definition \ref{HalfSpaceOrderTwo}). 
\item $\pi_{k,0}(Q)\in
I(P_{d(k,0)-m(0,0)}P_{d(k,0)})$ 
and $\Delta(M_{k,0})(\pi_{k,0}(Q))=0$. 
\item For $f_{i}\in\mathcal{G}_{k,0,>}$, we have $\pi_{i,0}(Q)\notin{}I(P_{d(i,0)-m(i,0)}P_{d(i,0)})$ 
or $\Delta(M_{i,0})(\pi_{i,0}(Q))>0$. 
\end{enumerate}
We denote this space by $I(k,0)$. 
\end{mydef}

\begin{rem}
The first condition says that if $Q=\mathrm{trop}(f)$ for some polynomial $f$ with homomorphism $\psi_{f}$, 
then the corresponding Newton polygon of $\psi_{f}(F_{G_{2}})$ is of the same type as $\psi_{f_{k}}(F_{G_{2}})$. The second condition then says the line segment of smallest slope in $\psi_{f}(F_{k,0})$ is of the same type as that of $f_{k}$. That is, it has the same multiplicity and it is related to the slopes of $\psi_{f}(F_{G_{2}})$ in the same way.  
Finally, the third condition says that the line segments induced by filtration types with more branches are not attained by $\mathrm{trop}(f)$: either the line segment of smallest slope has the wrong multiplicity or the slope is wrong.  
\end{rem}

\begin{lemma}\label{PolyhedraIsomorphic}
We have $\mathrm{trop}(f_{k})\in{I(k,0)}$. If $\mathrm{trop}(f_{k})\in{I(k',0)}$ for another $f_{k'}\in\mathcal{G}_{k,0}$, then we have that the marked tree filtrations of $f_{k}$ and $f_{k'}$ are isomorphic up to height $a_{1}$. 
\end{lemma}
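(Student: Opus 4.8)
The plan is to show both assertions by tracking what membership in $I(k,0)$ forces about the relevant Newton polygons, and then invoking Proposition \ref{MainProposition4} together with Remark \ref{ExtendingIsomorphisms}. For the first assertion, that $\mathrm{trop}(f_{k})\in I(k,0)$: condition (1) holds by Definition \ref{HalfSpaceOrderTwo}, since $\psi_{k}(F_{G_{2}})$ is tautologically of the same Newton-polygon type as itself. Condition (2) holds because, by Proposition \ref{MainProposition4} applied to $\psi=\psi'=\psi_{k}$, the slope of minimal length in the Newton polygon of $\psi_{k}(F_{k,0})$ is exactly the minimizing value $M$, so the modified linear function $\Delta(M_{k,0})$ vanishes at $\pi_{k,0}(\mathrm{trop}(f_{k}))$, and the length of that segment is $m(k,0)$ by construction, placing the point in the half-space $I(P_{d(k,0)-m(k,0)}P_{d(k,0)})$. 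Condition (3) is the content of Proposition \ref{MainProposition4} in its negative direction: if $f_{i}\in\mathcal{G}_{k,0,>}$ had $\pi_{i,0}(\mathrm{trop}(f_{k}))\in I(P_{d(i,0)-m(i,0)}P_{d(i,0)})$ with $\Delta(M_{i,0})$ vanishing, then the Newton polygon of $\psi_{k}(F_{i,0})$ would attain the minimizing value $M_{i,0}$ with the correct multiplicity, so by Proposition \ref{MainProposition4} the marked tree filtration of $f_{k}$ would contain a $c_{k,0}$-trivial subtree isomorphic to the maximal $c_{k,0}$-trivial subtree of $f_{i}$, which has strictly more branches — contradicting that $f_{k}$ has fewer branches at $c_{k,0}$.

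For the second assertion, suppose $\mathrm{trop}(f_{k})\in I(k',0)$ for some $f_{k'}\in\mathcal{G}_{k,0}$. Since $f_{k}$ and $f_{k'}$ lie in $\mathcal{G}_{k,0}$, they have the same order-two half-space, hence (after the harmless normalization of branch heights) the same Newton polygon for $F_{G_{2}}$; in particular their marked tree filtrations have the same number of branches at every height $\le a_{1}$, and the same root branching $a_{0}$. Now condition (2) of membership in $I(k',0)$ says $\pi_{k',0}(\mathrm{trop}(f_{k}))\in I(P_{d(k',0)-m(k',0)}P_{d(k',0)})$ with $\Delta(M_{k',0})$ vanishing; unwinding the definitions, this says the Newton polygon of $\psi_{k}(F_{k',0})$ has a segment of smallest slope whose slope equals the minimizing value $M$ of $f_{k'}$ and whose horizontal length equals $m(k',0)$. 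By Proposition \ref{MainProposition4} (with $\phi'$ the filtration of $f_{k}$ and $\phi$ that of $f_{k'}$, which are isomorphic up to height $a_{0}$ and have no branching between $a_{0}$ and $c_{k',0}$), this is exactly the condition that $\phi'$ contains a $c_{k',0}$-trivial subtree isomorphic to the maximal $c_{k',0}$-trivial subtree of $f_{k'}$. Since both trees have the same number of branches at $c_{k',0}$, these maximal $c_{k',0}$-trivial subtrees have the same cardinality, and the containment is then an isomorphism of the full $c_{k',0}$-truncated structures. Thus $f_{k}$ and $f_{k'}$ are isomorphic up to height $c_{k',0}$, and since neither has branching strictly between $a_{0}$ and $a_{1}$, Remark \ref{ExtendingIsomorphisms} upgrades this to an isomorphism up to height $a_{1}$.

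The main obstacle I expect is bookkeeping rather than a deep idea: one must carefully match the combinatorial data extracted from the Newton half-space conditions (multiplicity $m(i,0)$ as horizontal length of the minimal segment, the linear function $M_{i,0}$ as the minimizing value expressed in the $b$-coordinates) with the hypotheses of Proposition \ref{MainProposition4}, and handle the normalization that lets us assume the branch heights of all $f_{i}\in\mathcal{G}_{k,0}$ coincide — this is legitimate precisely because filtration types are defined modulo edge-length changes and the tropical invariants' combinatorial weights are length-independent (Remark \ref{CombEquivInvariants}). A secondary subtlety is verifying that "isomorphic up to height $c_{k',0}$" really does promote to "up to height $a_{1}$": this needs that there is no branching of either filtration in the open interval $(a_{0},a_{1})$, which holds since $a_{1}$ is by definition the next branch height of $f_{k}$ and $f_{k'}$ shares the same branch heights in this range.
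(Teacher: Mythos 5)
Your overall strategy matches the paper's intended argument (the paper's own proof is a single sentence citing Proposition~\ref{MainProposition4}), and your treatment of the first assertion is correct: conditions (1), (2), (3) of membership in $I(k,0)$ are verified exactly as you describe, with (3) following from the negative direction of Proposition~\ref{MainProposition4}.

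However, there is a genuine gap in the second assertion. You claim that since $f_k$ and $f_{k'}$ lie in $\mathcal{G}_{k,0}$ and hence share the same Newton polygon for $F_{G_2}$, they have ``the same number of branches at every height $\le a_1$.'' This is false in general. The Newton polygon of $F_{G_2}$ records only the multiset $\{\mathrm{val}([ij])\}$, i.e.\ the multiplicities of each pairwise valuation, and this does not determine the number of branches. For instance, on six leaves the partitions $\{4,1,1\}$ and $\{3,3\}$ at the lowest branching level both give $\sum\binom{k_i}{2}=6$ pairs inside branches and $9$ pairs across, so if all further splitting happens at a single higher height, the two filtrations have identical $F_{G_2}$ Newton polygons but $3$ versus $2$ branches. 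The paper is careful here: it only asserts that members of $\mathcal{G}_{k,0}$ share the same number of \emph{branch heights} (number of distinct segments), and the very existence of $\mathcal{G}_{k,0,>}$ presupposes that branch counts can differ within $\mathcal{G}_{k,0}$.

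The correct way to obtain equality of branch counts is exactly where condition (3) earns its keep and where the first assertion of the lemma is needed. Write $n_1$, $n_2$ for the number of branches of $\phi_{f_k}$, $\phi_{f_{k'}}$ at $c_{k',0}$. Condition (2) of $I(k',0)$ and Proposition~\ref{MainProposition4} give $n_1\ge n_2$, as you note. For the reverse inequality, suppose $n_1>n_2$; then $f_k\in\mathcal{G}_{k',0,>}$ (noting $\mathcal{G}_{k',0}=\mathcal{G}_{k,0}$), so condition (3) of $I(k',0)$ applied to $f_i=f_k$ forces either $\pi_{k,0}(\mathrm{trop}(f_k))\notin I(P_{d(k,0)-m(k,0)}P_{d(k,0)})$ or $\Delta(M_{k,0})(\pi_{k,0}(\mathrm{trop}(f_k)))>0$. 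But the first assertion, namely $\mathrm{trop}(f_k)\in I(k,0)$, says condition (2) of $I(k,0)$ holds, which is the negation of both — a contradiction. Hence $n_1=n_2$, the containment of $c_{k',0}$-trivial subtrees is an isomorphism of maximal ones, and the rest of your argument (Remark~\ref{ExtendingIsomorphisms}) goes through.
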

\begin{proof}
The first directly follows from the definitions above and the Newton polygon theorem. The second follows from Proposition \ref{MainProposition4}.   
\end{proof}

We now consider the set of all $f_{i}\in{\mathcal{G}_{k,0}}$ such that $\mathrm{trop}(f_{i})\in{I(k,0)}$. We denote this set by $\mathcal{G}_{k,1}$. By the Lemma above, the polynomials in this set have marked tree filtrations that are isomorphic up to height $a_{1}$. 
We now repeat the procedures that led us to Definition \ref{PolyhedraHeightZero} and similarly obtain a subspace $I(k,1)$ using the $f_{i}\in\mathcal{G}_{0,1}$. By considering the $f_{i}\in\mathcal{G}_{k,1}$ whose tropicalizations land in $I(k,1)$, we then obtain $\mathcal{G}_{k,2}$, which gives a subspace $I(k,2)$ and so on. This gives us generalized polyhedra up to any height.  
\begin{mydef}
{\bf{[Generalized polyhedra up to finite heights II]}}
The generalized polyhedron for $f_{k}$ up to height $c_{k,r}$ is the polyhedron $I(k,r)$. 

\end{mydef}


\begin{prop}\label{PropositionStratification}
Let $f_{k}\in\mathcal{G}$ be any polynomial and write $a_{r_{k}}$ for its largest branch height. We then have $\mathrm{trop}(f_{k})\in{I(k,r_{k})}$ and $\mathrm{trop}(f_{k})\notin{I(k',r_{k'})}$ for any other $f_{k'}\in\mathcal{G}$.
\end{prop}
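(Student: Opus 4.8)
The plan is to establish the two assertions of Proposition \ref{PropositionStratification} by separate arguments, using the inductive construction of the spaces $I(k,r)$ together with Lemma \ref{PolyhedraIsomorphic} (and its underlying tool, Proposition \ref{MainProposition4}). For the first assertion, $\mathrm{trop}(f_k)\in I(k,r_k)$, I would argue by induction on the height index $r$. The base case is that $\mathrm{trop}(f_k)\in NH(k,2)$, which is immediate from Definition \ref{HalfSpaceOrderTwo} since $\pi_2(\mathrm{trop}(f_k))$ is by construction the vector of valuations of $\psi_k(F_{G_2})$, hence lies in its own Newton polygon moduli space. For the inductive step, Lemma \ref{PolyhedraIsomorphic} already gives $\mathrm{trop}(f_k)\in I(k,0)$; one checks that the same three conditions of Definition \ref{PolyhedraHeightZero} (suitably re-indexed for height $c_{k,r}$) hold for $\mathrm{trop}(f_k)$: condition (1) is the inductive hypothesis $\mathrm{trop}(f_k)\in I(k,r-1)$, condition (2) says the minimal line segment of $\psi_k(F_{k,r})$ has the predicted multiplicity $m(k,r)$ and slope $M_{k,r}$, which is exactly Proposition \ref{MainProposition4} applied with $\phi=\phi'$, and condition (3) says the minimal segments forced by filtration types with strictly more branches are not attained by $\mathrm{trop}(f_k)$ — this is again Proposition \ref{MainProposition4}, since if one of those segments were attained then $\phi_k$ would contain a $c$-trivial subtree of larger order than it actually has, a contradiction. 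Iterating up to $r=r_k$ gives $\mathrm{trop}(f_k)\in I(k,r_k)$.

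For the second assertion, suppose for contradiction that $\mathrm{trop}(f_k)\in I(k',r_{k'})$ for some $f_{k'}\neq f_k$ in $\mathcal{G}$. Since $I(k',r_{k'})\subset NH(k',2)$ and also $\mathrm{trop}(f_k)\in NH(k,2)$, the half-spaces $NH(k,2)$ and $NH(k',2)$ share the point $\mathrm{trop}(f_k)$; because the Newton polygon moduli spaces of distinct polygon types are disjoint (their essential vertices differ), this forces $NH(k,2)=NH(k',2)$, i.e. $f_{k'}\in\mathcal{G}_{k,0}$, and in particular $f_k$ and $f_{k'}$ have the same number of branch heights with the same underlying Newton polygon of the order-two invariant. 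Now I would run an induction up the heights using Lemma \ref{PolyhedraIsomorphic}: from $\mathrm{trop}(f_k)\in I(k',0)$ and $f_{k'}\in\mathcal{G}_{k,0}$ Lemma \ref{PolyhedraIsomorphic} gives that $\phi_k$ and $\phi_{k'}$ are isomorphic up to height $a_1$; hence $f_k\in\mathcal{G}_{k',1}$ and the construction of $I(k',1)$ applies, and membership $\mathrm{trop}(f_k)\in I(k',1)$ (which follows from $\mathrm{trop}(f_k)\in I(k',r_{k'})\subset I(k',1)$, since the spaces are nested by construction) upgrades the isomorphism to height $a_2$, and so on. After $r_{k'}$ steps we conclude that $\phi_k$ and $\phi_{k'}$ are isomorphic up to the largest branch height of $f_{k'}$; by symmetry of the roles (or since the number of branch heights agree) they are isomorphic as marked tree filtrations, i.e. as filtration types. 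But $\mathcal{G}$ contains exactly one representative of each filtration type, so $f_k=f_{k'}$, contradicting the assumption.

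The main obstacle I anticipate is the bookkeeping in the induction for the second part: one must be careful that the nesting $I(k',r_{k'})\subset I(k',r)$ for $r\le r_{k'}$ really holds (this should follow from the fact that each $I(k',r+1)$ is cut out inside $\mathcal{G}_{k',r+1}$, whose members tropicalize into $I(k',r)$), and that at each stage $f_k$ genuinely belongs to the relevant set $\mathcal{G}_{k',r}$ so that the height $c_{k',r}$ and the associated generating polynomial $F_{k',r}$ are defined for $f_k$ — this is where the "isomorphic up to height $a_r$, hence same branch structure below $a_{r+1}$" consequence of Lemma \ref{PolyhedraIsomorphic} is used essentially, together with Remark \ref{ExtendingIsomorphisms} to propagate isomorphisms across the non-branching interval $(a_r, c_{k',r})$. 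The comparison of minimal slopes and multiplicities is entirely controlled by Proposition \ref{MainProposition4}, so no genuinely new estimate is needed; the content is purely organizational.
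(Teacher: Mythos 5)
Your proof is correct and follows the same strategy as the paper: iterate the argument of Lemma \ref{PolyhedraIsomorphic} and Proposition \ref{MainProposition4} up the branch heights, using the nesting $I(k',r_{k'})\subset I(k',r_{k'}-1)$, to force an isomorphism of the marked tree filtrations of $f_k$ and $f_{k'}$ and hence a contradiction with $f_k\neq f_{k'}$. You have merely made explicit the induction that the paper compresses into the phrase ``as in Lemma \ref{PolyhedraIsomorphic}'', and separately verified the first assertion $\mathrm{trop}(f_k)\in I(k,r_k)$, which the paper leaves implicit.
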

\begin{proof}
Indeed, as in Lemma \ref{PolyhedraIsomorphic} we can show that $\mathrm{trop}(f_{k})\notin{I(k',r_{k'}-1)}$, since otherwise the tree associated to $f_{k}$ would be isomorphic to that of $f_{k'}$ up to height $r_{k'}$ by Proposition \ref{MainProposition4}, a contradiction. Since $I(k',r_{k'})\subset{I(k',r_{k'}-1)}$, we obtain the statement of the proposition.  
\end{proof}

By Proposition \ref{PropositionStratification}, we find that the associated polyhedron for every filtration type is unique. This gives a theoretical decomposition of the space of trees of any given degree $d$ in terms of half-spaces. In practice one can stop the process above much sooner, see for instance the examples later on in this section. 

\begin{rem}
To distinguish between two polynomials that have the same filtration type, we only need to look at the corresponding Newton polygon of $F_{G_{2}}$. The slopes of these polygons give us the branch heights, which in turn gives us the desired marked tree filtration. In other words, to navigate the moduli space of trees with the same filtration type, we accordingly change the branch heights. To go from one filtration type to the other, we can let the difference between two of these branch heights go to zero. It would be interesting to investigate these limits from a moduli space point of view as in \cite{ACP2015}. 



\end{rem}

\subsubsection{Trivial trees}

We first show that the half-spaces introduced in Definition \ref{NPHalfSpace} can detect {\it{trees of good reduction}}. The nomenclature will be explained in Section \ref{SectionPotentialReduction}. 

\begin{prop}
Let $f(x)$ 
be a separable polynomial over $K$. Let $G_{2}$ be a graph of order two with generating polynomial $F_{G_{2}}$ of degree $d(2)$. Then the phylogenetic type of the tree associated to $f(x)$ is trivial if and only if $\pi_{2}(\mathrm{trop}(f))\in{I(P_{0},P_{d(2)})}$. 
\end{prop}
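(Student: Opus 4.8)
The generating polynomial $F_{G_2}$ is obtained by symmetrizing $x-(\alpha_1-\alpha_2)^2$, so its roots are exactly the $(\alpha_i-\alpha_j)^2$ for $i<j$, and $d(2)=\binom{d}{2}$. Writing $\psi$ for the homomorphism attached to $f(x)$, the valuations of the roots of $\psi(F_{G_2})$ are precisely the numbers $2d_{i,j}=2\,\mathrm{val}(\alpha_i-\alpha_j)$. The leading coefficient is $1$ (so $b_{d(2)}=0$) and the constant coefficient is the discriminant, hence $b_0\in\mathbf{R}$. The Newton half-space $I(P_0,P_{d(2)})$ consists of those tropical vectors whose Newton polygon is a \emph{single} line segment from $P_0$ to $P_{d(2)}$, i.e. all slopes of the Newton polygon of $\psi(F_{G_2})$ are equal. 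By the Newton polygon theorem, this happens if and only if all roots $(\alpha_i-\alpha_j)^2$ have the same valuation, i.e. $d_{i,j}=d_{i',j'}$ for all pairs. So the statement reduces to: the phylogenetic type of the tree of $f(x)$ is trivial $\iff$ all $d_{i,j}$ are equal.

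**The combinatorial heart.** Recall from Example \ref{TreeBerkovich} that the marked tree filtration $\phi$ of $f(x)$ is governed by the matrix $(d_{i,j})$, and from Example \ref{TreeBerkovich2} that the associated metric tree is the convex hull in $\mathbf{P}^{1,\mathrm{an}}$ of the points $\alpha_i$ together with the ray to $\infty$. The phylogenetic type is trivial (a star with center $v_\infty$, all leaves attached directly) precisely when there is no branching away from the root $v_\infty$, equivalently when $\mathrm{join}(i,j)=v_\infty$ for all $i,j$, equivalently when every $d_{i,j}$ equals $a_0$, the single branch height at the root. First I would prove the easy direction: if all $d_{i,j}=a_0$ then there is exactly one branch height, namely $a_0$, at which all leaves split off simultaneously, so $G_T$ is a star and the phylogenetic type is trivial; and conversely the argument runs in reverse. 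For the converse direction I would use the ultrametric-type inequality on the $d_{i,j}$ (the three-point condition for tree metrics: among $d_{i,j},d_{i,k},d_{j,k}$ the minimum is attained at least twice): if the tree is a star, then for every triple the join points all coincide, forcing $d_{i,j}=d_{i,k}=d_{j,k}$; propagating this across all triples gives that a single value is attained everywhere.

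**Assembling the proof.** The plan is: (1) identify the roots of $\psi(F_{G_2})$ with the $(\alpha_i-\alpha_j)^2$ and invoke the Newton polygon theorem to translate ``$\pi_2(\mathrm{trop}(f))\in I(P_0,P_{d(2)})$'' into ``all $d_{i,j}$ coincide''; here one must check the half-space conditions of Definition \ref{NPHalfSpace} unwind correctly — conditions for $k<0$ are vacuous since $0$ is the smallest index, and the conditions $\psi(\Delta_k)\ge 0$ for $0\le k\le d(2)$ say exactly that every intermediate coefficient valuation lies on or above the segment from $P_0$ to $P_{d(2)}$, which together with $b_0,b_{d(2)}$ finite is the statement that the Newton polygon is that single segment. (2) Translate ``all $d_{i,j}$ coincide'' into ``$\phi$ has a single branch height $a_0$ and every leaf branches at $a_0$'' using the reconstruction of the metric tree from the marked tree filtration. (3) Observe that this is exactly the condition that $G_T$ is a star with leaf set $A\cup\{\infty\}$, i.e. the phylogenetic type is trivial.

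**Main obstacle.** The only real subtlety is the bookkeeping at the boundary of the half-space definition — making sure that the index conventions in Definition \ref{NPHalfSpace} (the reversed labeling $P_i=(i,b_{r-i})$, the strict versus non-strict inequalities, and the role of infinite coefficients) genuinely correspond to ``the Newton polygon is the segment $P_0P_{d(2)}$'' and nothing more. Since here $b_0$ and $b_{d(2)}=0$ are automatically finite (as noted in the remark following Definition \ref{NewtonPolygonModuli}), the infinite-coefficient clauses only enter through intermediate $b_k$, and condition (1) of Definition \ref{NPHalfSpace} with $i=0$ forces those to be ``above the line'', which is harmless: a coefficient with infinite valuation lies above any finite line. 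So the argument goes through, and the genuinely mathematical content is the elementary fact that a tree metric on $A$ is a star metric (rooted at $\infty$) iff all pairwise distances from the root-join points are equal.
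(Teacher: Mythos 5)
Your proposal is correct and follows essentially the same approach as the paper's proof, which simply observes that the phylogenetic type is trivial if and only if all $\mathrm{val}(\alpha_i-\alpha_j)$ coincide if and only if the Newton polygon of $\psi(F_{G_2})$ is a single line segment. You fill in more detail than the paper does (in particular unwinding the half-space conditions of Definition \ref{NPHalfSpace} and spelling out the tree combinatorics), but the chain of equivalences is identical.
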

\begin{proof}
If the phylogenetic type of the tree is trivial, then $v(\alpha_{i}-\alpha_{j})$ is the same for all $i,j$. This means that the Newton polygon of $F_{G_{2}}$ is a single line segment. Conversely, suppose that the Newton polygon of $F_{G_{2}}$ contains a single line segment. Then all of the $v(\alpha_{i}-\alpha_{j})$ are the same, which means that the tree associated to $f(x)$ is trivial. 
\end{proof}







\subsubsection{Polynomials of degree three }\label{SectionDegreeThree} 

Consider a separable polynomial of degree $d=3$: 
\begin{equation}
f(x)=x^{3}-a_{1}x^2+a_{2}x-a_{3}.
\end{equation}
%
 The marked tree filtration corresponding to the roots of $f$ in the sense of Definition \ref{DefinitionTreePolynomial} has $3$ unmarked points and one marked point. There are exactly two types, see Figure \ref{TreeN3}. 
 \begin{figure}
\centering
\includegraphics[width=9cm]{{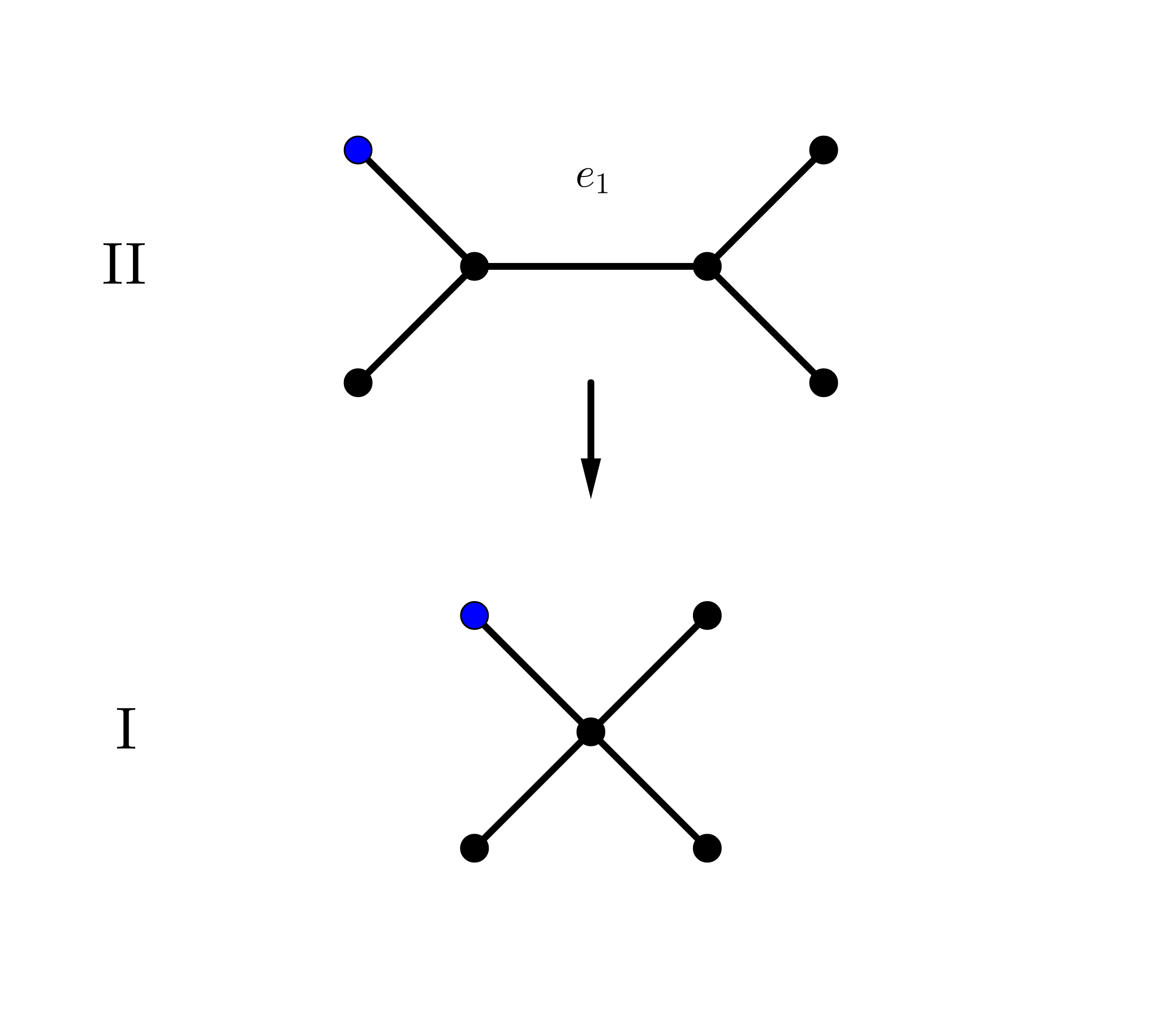}}
\caption{\label{TreeN3}
The two phylogenetic types for a polynomial of degree $3$ considered in Section \ref{SectionDegreeThree}. The blue point corresponds to $\infty$ and the arrow corresponds to the contraction of one of the edges. } 
\end{figure}
 In this case, we can detect the two types using the trivially weighted graph $G_{2}$ 
 of order two. 
 The generating polynomial $F_{G_{2}}$ has degree $3$ and there are exactly two types of Newton polygons: either there is a point where the slope changes (corresponding to a tree of type $\mathrm{II}$), or there is no such point (corresponding to a tree of type $\mathrm{I}$). 
 
 We write $F_{G_{2}}=\sum_{i=0}^{3}{c_{3-i}x^{i}}$ for the generating polynomial and $b_{i}$ for the valuations of its coefficients.  
 Explicitly, we have 
 \begin{eqnarray*}
 c_{3}&=&-a_{2}^2a_{1}^2+4a_{2}^3+4a_{3}a_{1}^3-18a_{1}a_{2}a_{3}+27a_{3}^2,\\
 c_{2}&=&a_{1}^4-6a_{2}a_{1}^2+9a_{2}^2.
 \end{eqnarray*}
 Note that $c_{3}$ is the discriminant of $f(x)$. We write $P_{i}=(i,b_{3-i})$ for the points in the Newton polygon of $F_{G_{2}}$.  
If the Newton polygon of $F_{G_{2}}$ has a breaking point, then it occurs at $P_{1}=(1,b_{2})$. In terms of the notation introduced in the previous section, we thus have the following description of the tree-types: 
 \begin{center}
 \begin{tabular}{|c| c|c| }
 \hline
 Tree type & Polyhedron & $2\ell(e_{1})$ \\
 \hline
 I & $I(P_{0},P_{3})$ & -\\
 \hline
 II & $I(P_{1},P_{3})$ & $\dfrac{2b_{3}-3b_{2}}{2}$\\
 \hline
 \end{tabular}
 \end{center}
More explicitly, we can describe these Newton polygon half-spaces as follows. The line through $(1,b_{2})$ and $(3,0)$ attains the value $3/2\cdot{}b_{3}$ at $0$, so we have 
  \begin{equation}
  3b_{2}<2b_{3}
  \end{equation} 
  if and only if the tree is of type $\mathrm{II}$. We have $3b_{2}\geq{}2b_{3}$ if and only if the tree is of type $\mathrm{I}$. 

 We now transform this criterion to one that might be more familiar to the reader.  
  Let 
  \begin{equation}
  j_{\mathrm{trop}}=\dfrac{c_{2}^3}{c_{3}^2}.
  \end{equation} 
  Then the tree-type of $f(x)$ is $\mathrm{II}$ if and only if $v(j_{\mathrm{trop}})<0$. 
We can re-interpret this using the theory of elliptic curves as follows. 
Let $E$ be the elliptic curve defined by
  \begin{equation}
  y^2=f(x)
  \end{equation} 
  and let $j(E)$ be its $j$-invariant.  We then have
  \begin{equation}\label{JInvariantModified}
  2^{16}j_{\mathrm{trop}}=j(E)^2.
  \end{equation}
If the residue characteristic of $K$ is not $2$, 
then the tree associated to $f(x)$ has type II if and only if $E$ has multiplicative reduction. This will follow from Proposition \ref{CriterionPotGoodRed}, but see also \cite[Proposition 3.10]{FaithTropEll} for a proof that works when the residue characteristic is furthermore not equal to $3$.    
We will see in Section \ref{Section3} that we can in general characterize the reduction types of superelliptic curves using the tree type of $f(x)$. 

 \subsubsection{Polynomials of degree four}\label{SectionDegreeFour}

 We write 
 \begin{equation}
 f(x)=x^4-a_{1}x^3+a_{2}x^2-a_{3}x+a_{4}.
 \end{equation}
If we consider unmarked phylogenetic trees with $5$ points, then there are three types. If we add a marked point, then there are five types, see 
Figure \ref{TreeN4}. Here we used an Arabic numeral to denote the phylogenetic type and a Roman numeral to denote the non-leaf vertex that the marked point is attached to.  
 \begin{figure}
\centering
\includegraphics[width=11cm]{{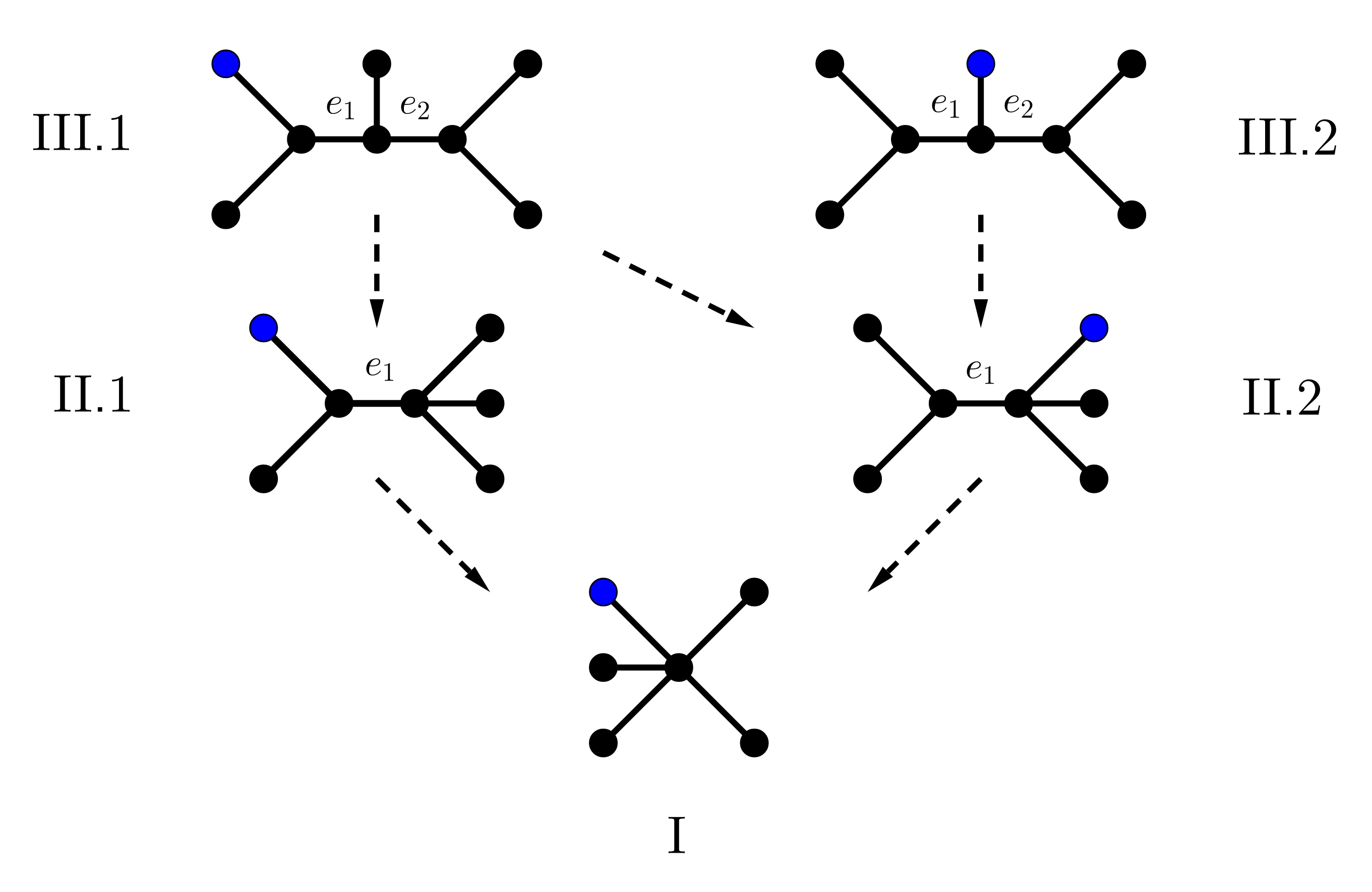}}
\caption{\label{TreeN4}
The five marked phylogenetic types for a polynomial of degree $4$ considered in Section \ref{SectionDegreeFour}. The blue point corresponds to $\infty$ and an arrow corresponds to a contraction of one of the edges. } 
\end{figure}
 We consider the invariants corresponding to a graph $G$ of order two 
 and trivial weight function. The generating polynomial $F_{G,1}=\sum_{i=0}^{6}c_{n-i}x^{i}$ has degree $|H_{G,1}|={{4}\choose{2}}=6$, we will only need $c_{3}$, $c_{4}$, $c_{5}$ and $c_{6}$. Explicitly, we have the following:
 \begin{align*}
c_{3}:= &-a_{1}^6 + 8a_{1}^4a_{2} + 8a_{1}^3a_{3} - 24a_{1}^2a_{2}^2 - \\
 {}&6a_{1}^2a_{4} -30a_{1}a_{2}a_{3} + 28a_{2}^3 + 16a_{2}a_{4} + 26a_{3}^2,\\
c_{4}:=&-6a_{1}^5a_{3} + 2a_{1}^4a_{2}^2 + 6a_{1}^4a_{4} + \\
&38a_{1}^3a_{2}a_{3} -
    12a_{1}^2a_{2}^3 - 32a_{1}^2a_{2}a_{4} - 25a_{1}^2a_{3}^2 - \\
    &54a_{1}a_{2}^2a_{3} +
    56a_{1}a_{3}a_{4} + 17a_{2}^4 + 24a_{2}^2a_{4} + 48a_{2}a_{3}^2 - 112a_{4}^2,\\
    c_{5}:=&-9a_{1}^4a_{3}^2 + 6a_{1}^3a_{2}^2a_{3} + 18a_{1}^3a_{3}a_{4} - a_{1}^2a_{2}^4 -
    6a_{1}^2a_{2}^2a_{4} +\\
    & 42a_{1}^2a_{2}a_{3}^2 + 72a_{1}^2a_{4}^2 - 26a_{1}a_{2}^3a_{3} -
    120a_{1}a_{2}a_{3}a_{4} - 54a_{1}a_{3}^3 +\\
    & 4a_{2}^5 + 32a_{2}^3a_{4} + 18a_{2}^2a_{3}^2
    - 192a_{2}a_{4}^2 + 216a_{3}^2a_{4},\\
    c_{6}:=&-27a_{1}^4a_{4}^2 + 18a_{1}^3a_{2}a_{3}a_{4} - 4a_{1}^3a_{3}^3 -\\
    & 4a_{1}^2a_{2}^3a_{4}
    + a_{1}^2a_{2}^2a_{3}^2 + 144a_{1}^2a_{2}a_{4}^2 - 6a_{1}^2a_{3}^2a_{4} -
    80a_{1}a_{2}^2a_{3}a_{4} + \\
    &18a_{1}a_{2}a_{3}^3 - 192a_{1}a_{3}a_{4}^2 + 16a_{2}^4a_{4} -
    4a_{2}^3a_{3}^2 - 128a_{2}^2a_{4}^2 + \\
    &144a_{2}a_{3}^2a_{4} - 27a_{3}^4 + 256a_{4}^3.
\end{align*}
Here the polynomial $c_{6}$ is again the discriminant of $f(x)$. We write $b_{i}=v(c_{i})$ for the corresponding valuations.  
 The following half-spaces now describe the different tree types:  
 
   \begin{center}
 \begin{tabular}{|c| c|c|c| }
 \hline
 \textnormal{Tree type} & \textnormal{Polyhedra} & $2\ell(e_{1})$ & $2\ell(e_{2})$ \\
 \hline
 I & $I(P_{0},P_{6})$ & -- & -- \\
 \hline
 II.1 & $I(P_{3},P_{6})$, $I(P_{0},P_{3})$ & $\dfrac{b_{6}-2b_{3}}{3}$ & -- \\
 \hline
 II.2 & $I(P_{1},P_{6})$ & $\dfrac{5b_{6}-6b_{5}}{5}$ & --\\
 \hline
 III.1 & $I(P_{3},P_{6})$, $I(P_{1},P_{3})$ & $\dfrac{3b_{5}-5b_{3}}{6}$ & $\dfrac{2b_{6}-3b_{5}+b_{3}}{2}$\\
 \hline
 III.2 & $I(P_{2},P_{6})$ & $\dfrac{4b_{5}-5b_{4}}{4}$ & $\dfrac{4b_{6}-4b_{5}-b_{4}}{4}$\\
      \hline
 \end{tabular}
\end{center}

Here the edge lengths were calculated by first determining the slope of the trivial segment (which is $b_{5}/5$ for instance for trees of type II.2). One then subtracts this from the other slopes to obtain the edge lengths. We can write out the above Newton polygon half-spaces explicitly as follows. Let 
 \begin{equation}
h(x):=h_{P_{1},P_{3}}(x)=-\dfrac{(b_{5}-b_{3})}{2}x+\dfrac{3b_{5}-b_{3}}{2}
 \end{equation}
 By eliminating some of the redundant conditions in the above table, we arrive at the following equivalent description. 
 
 
  \begin{center}
 \begin{tabular}{|c| c| }
 \hline
 \textnormal{Tree type} & \textnormal{Polyhedra} \\
 \hline
 I & $\dfrac{i}{6}b_{6}\leq{b_{i}}$ for $i\in\{3,4,5\}$ \\
 \hline
   II.1 & $\dfrac{i}{3}b_{3}<{b_{i}}$ for $i\in\{4,5,6\}$ and $
    h(0)\geq{}b_{6}$ \\ 
    \hline
   II.2 & $\dfrac{i}{5}b_{5}\leq{b_{i}}$ for $i\in\{2,3\}$ and $\dfrac{6}{5}b_{5}<b_{6}$\\
    \hline
    III.1 & $\dfrac{i}{3}b_{3}<{b_{i}}$ for all $i\in\{4,5,6\}$ and $h(0)<b_{6}$. \\
     \hline
     III.2 &  $\dfrac{3}{4}b_{4}\leq{b_{3}}$ and $\dfrac{i}{4}b_{4}<b_{i}$ for $i\in\{5,6\}$ \\
     \hline
 \end{tabular}
\end{center}

 \subsubsection{Polynomials of degree five}\label{PolynomialsDegreeFive} \label{SectionDegreeFive}
 
 We write 
 \begin{equation}
 f(x)=x^5-a_{1}x^4+a_{2}x^3-a_{3}x^2+a_{4}x-a_{5}.
 \end{equation}
 
 There are $7$ types of unmarked phylogenetic trees with $6$ leaves, see Figure \ref{TreeN5}.
  \begin{figure}
\centering
\includegraphics[width=11cm]{{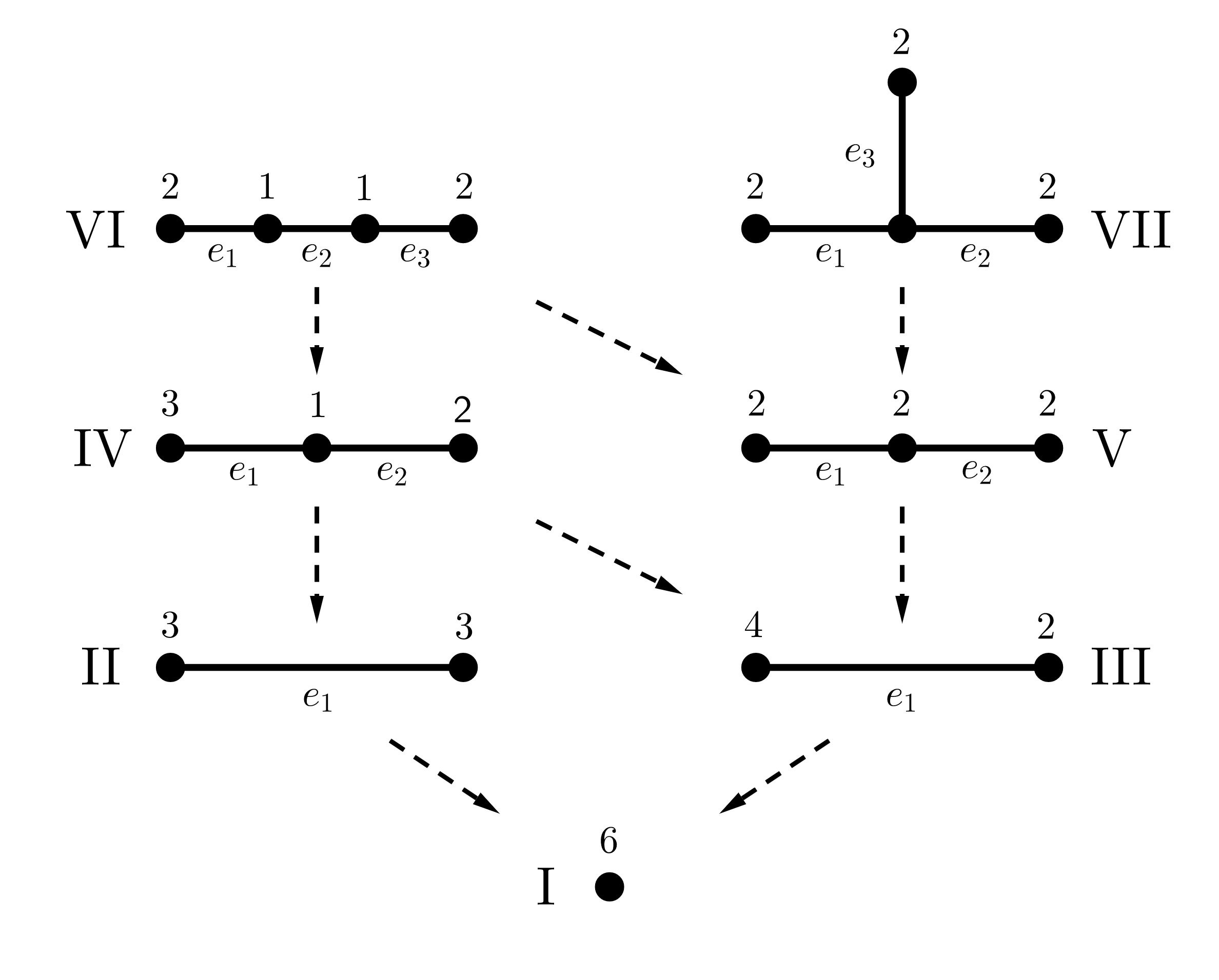}}
\caption{\label{TreeN5}
The seven unmarked phylogenetic types for a polynomial of degree $5$ considered in Section \ref{PolynomialsDegreeFive}. The arrows correspond to the contraction of one of the edges and the numbers next to the vertices correspond to the number of leaves attached to the vertex. } 
\end{figure}
By adding a marked point, we obtain $12$ types. We use a Roman numeral to denote the phylogenetic type, we add an Arabic numeral to denote the marked vertex. Here we read the  vertices from left to right. We now encounter a new phenomenon: the invariants associated to a graph of order two do not give enough information to distinguish between the tree types $\mathrm{IV}.2$ and $\mathrm{VI}.2$. For a concrete example, suppose that $\mathrm{char}(k)\neq{2,3}$ and consider the polynomials
\begin{eqnarray*}
f_{1}&=&(x-(1+\varpi))(x-(1+2\varpi))(x-\varpi)(x-\varpi^{2})(x-2\varpi^{2})\\
f_{2}&=&(x-\varpi)(x-2\varpi)(x-3\varpi)(x-(1+\varpi^2))(x-(1+2\varpi^2)).
\end{eqnarray*}
Here $\varpi$ is some element in $K^{*}$ with $\mathrm{val}(\varpi)>0$. The trees associated to $f_{1}$ and $f_{2}$ are the ones labeled I and II in Figure \ref{TreesFiveLeaves}. Their branch heights are  $0$, $a_{1}=\mathrm{val}(\varpi)$ and $a_{2}=2\cdot{\mathrm{val}(\varpi)}$. The Newton polygons of the polynomials $F_{G_{2},1}$ for these marked tree filtrations are identical: they both have a line segment of slope $0$ and multiplicity $6$, a line segment of slope $\mathrm{val}(\varpi)$ and multiplicity $3$, and a line segment of slope $2\cdot{}\mathrm{val}(\varpi)$ and multiplicity $1$. To distinguish between these trees, we will use the invariants associated to a graph of order three.  

 
Before we study these higher-order invariants, we first describe the trees that can be distinguished using $F_{G_{2},1}=\prod_{i<j}(x-[ij])=\sum_{i=0}^{10}c_{10-i,2}x^{i}$. The explicit forms of the $c_{i}$ can be found in Section \ref{Appendix}. We write $b_{10-i}=b_{10-i,2}$ for the valuations of these coefficients and $S=\{P_{i}\}$ with $P_{i}=(i,b_{10-i,2})$. We have the following preliminary subdivision of the tree-types:
   \begin{center}
 \begin{tabular}{|c| c| }
 \hline
 \textnormal{Polyhedra} & \textnormal{Tree types} \\
 \hline
$I(P_{6},P_{10})$ &  III.2, IV.3, V.1, VI.1, VII. \\
 \hline
 $I(P_{4},P_{10})$ & IV.2, VI.2 \\
 \hline
$I(P_{3},P_{10})$ & II, IV.1 \\
 \hline
$I(P_{2},P_{10})$ & V.2 \\
 \hline
$I(P_{1},P_{10})$ & III.1\\
 \hline
$I(P_{0},P_{10})$ & I \\
 \hline 
 \end{tabular}
\end{center} 

We now subdivide the tree types satisfying $I(P_{6},P_{10})$ by adding conditions for the extra line segments in the Newton polygon of $F_{G_{2}}$. The result is as follows: 

   \begin{center}
 \begin{tabular}{|c| c|c|c|c| }
 \hline
 \textnormal{Tree type} & \textnormal{Extra polyhedra} & $2\ell(e_{1})$ & $2\ell(e_{2})$ & $2\ell(e_{3})$ \\ 
 \hline
III.2 & $I(P_{0},P_{6})$ & $\dfrac{2b_{10}-5b_{4}}{12}$ & -- & -- \\ 
\hline
IV.3 & $I(P_{0},P_{3})$, $I(P_{3},P_{6})$ & $\dfrac{b_{10}-2b_{7}+b_{4}}{3}$  & $\dfrac{4b_{7}-7b_{4}}{12}$ & -- \\
\hline
V.1 & $I(P_{1},P_{6})$ & $\dfrac{4b_{9}-9b_{4}}{20}$ & $\dfrac{5b_{10}-6b_{9}+b_{4}}{5}$ & --  \\ 
\hline
VI.1 & $I(P_{3},P_{6})$, $I(P_{1},P_{3})$ & $\dfrac{4b_{7}-7b_{4}}{12}$ & $\dfrac{3b_{9}-5b_{7}+2b_{4}}{6}$ & $\dfrac{2b_{10}-3b_{9}+b_{7}}{2}$ \\ 
\hline
VII & $I(P_{2},P_{6})$ & $\dfrac{b_{8}-2b_{4}}{4}$ & $\dfrac{4b_{9}-4b_{8}-b_{4}}{4}$ & $\dfrac{4b_{10}-4b_{9}-b_{4}}{4}$ \\
\hline
 \end{tabular}
\end{center}

We can similarly treat trees of type II and IV.1:

   \begin{center}
 \begin{tabular}{|c| c|c|c| }
 \hline
 \textnormal{Tree type} & \textnormal{Extra polyhedra} & $2\ell(e_{1})$ & $2\ell(e_{2})$ \\ 
 \hline
 II & $I(P_{0},P_{3})$ & $\dfrac{7b_{10}-10b_{7}}{21}$ & -- \\
 \hline
 IV.1 & $I(P_{1},P_{3})$ & $\dfrac{7b_{9}-9b_{7}}{14}$ & $\dfrac{2b_{10}-3b_{9}+b_{7}}{2}$ \\
 \hline
 \end{tabular}
\end{center}

To distinguish between the types in $I(P_{4},P_{10})$, we use higher-order invariants. 
Consider the complete graph $G_{3}$ on three vertices. 
The corresponding polynomial is $F_{G_{3},1}=\prod_{}(x-\sigma_{i}([12][13][23]))$, where the product is over a set of representatives $\sigma_{i}$ of $S_{5}/H$ and $H$ is the stabilizer of $[12][13][23]$. 
This stabilizer has order ${5\choose{3}}=12$, so $F_{G_{3},1}$ has degree $5!/12=10$. 
We denote the coefficients by $c_{i,3}$, where the labeling is so that $F_{G_{3},1}=\sum_{i=0}^{10}c_{10-i,3}x^{i}$. Their explicit forms can be found in Section \ref{Appendix}. As before, the valuations of the $c_{i,3}$ are denoted by $b_{i,3}$. For the Newton polygon conditions as in Definition \ref{NPHalfSpace}, we will use the set $\{P_{i,3}\}$ where $P_{i,3}=(i,b_{10-i,3})$. 
We consider three cases: 
\begin{enumerate}
\item $\ell(e_{1})=\ell(e_{2})$,
\item $\ell(e_{1})>\ell(e_{2})$,
\item $\ell(e_{1})<\ell(e_{2})$.
\end{enumerate}
We write $\mathrm{IV.2.i}$ for these. The Newton polygons of $F_{G_{2}}$ for the first two are unique, so we don't need higher invariants here. The third however has overlap with $\mathrm{VI.2}$ (as we saw earlier), so we add a half-space for $F_{G_{3}}$. The result is:  


  \begin{center}
 \begin{tabular}{|c| c|c|c| }
 \hline
 \textnormal{Tree type} & \textnormal{Extra polyhedra} & $2\ell(e_{1})$ & $2\ell(e_{2})$\\ 
 \hline
 IV.2.1 & $I(P_{0},P_{4})$ & $\dfrac{3b_{10}-5b_{6}}{12}$ & $\dfrac{3b_{10}-5b_{6}}{12}$ \\ 
 \hline
IV.2.2 & $I(P_{0},P_{3})$, $I(P_{3},P_{4})$ & $\dfrac{b_{10}-4b_{7}+3b_{6}}{3}$ & $\dfrac{6b_{7}-7b_{6}}{6}$  \\ 
 \hline
IV.2.3 & $I(P_{1},P_{4})$, $I(P_{4,3},P_{10,3})$ & $\dfrac{2b_{9}-3b_{6}}{6}$ & $\dfrac{3b_{10}-4b_{9}+b_{6}}{3}$\\ 
 \hline
 \end{tabular}
\end{center}

For trees of type VI.2, we have a similar subdivision: 


  \begin{center}
 \begin{tabular}{|c| c|c|c|c| }
 \hline
 \textnormal{Tree type} & \textnormal{Extra polyhedra} & $2\ell(e_{1})$ & $2\ell(e_{2})$ & $2\ell(e_{3})$ \\ 
 \hline
 VI.2.1 & $I(P_{3},P_{4})$, $I(P_{1},P_{3})$ & $\dfrac{6b_{7}-7b_{6}}{6}$ & $\dfrac{b_{9}-3b_{7}+2b_{6}}{2}$  & $\dfrac{2b_{10}-3b_{9}+b_{7}}{2}$\\
 \hline
VI.2.2 & $I(P_{1},P_{4})$, $I(P_{3,3},P_{10,3})$ & $\dfrac{2b_{9}-3b_{6}}{6}$ & $\dfrac{2b_{9}-3b_{6}}{6}$ & $\dfrac{3b_{10}-4b_{9}+b_{6}}{3}$ \\
 \hline
VI.2.3 & $I(P_{2},P_{4})$, $I(P_{1},P_{2})$, & $\dfrac{2b_{9}-3b_{8}+b_{6}}{2}$ & $\dfrac{3b_{8}-4b_{6}}{6}$ & $\dfrac{b_{10}-2b_{9}+b_{8}}{1}$ \\
& $I(P_{6,3},P_{10,3})$, $I(P_{3,3},P_{6,3})$ & {} & {} & {}\\
 \hline
 VI.2.4 & $I(P_{2},P_{4})$, $I(P_{0},P_{2})$ & $\dfrac{b_{10}-2b_{8}+b_{6}}{2}$ & $\dfrac{3b_{8}-4b_{6}}{6}$ & $\dfrac{b_{10}-2b_{8}+b_{6}}{2}$ \\
 \hline
 VI.2.5 & $I(P_{2},P_{4})$, $I(P_{1},P_{2})$ & $\dfrac{b_{10}-2b_{9}+b_{8}}{1}$ & $\dfrac{3b_{8}-4b_{6}}{6}$ & $\dfrac{2b_{9}-3b_{8}+b_{6}}{2}$  \\
 & $I(P_{6,3},P_{10,3})$, $I(P_{4,3},P_{6,3})$ & {}& {} & {}\\ 
 \hline
 \end{tabular}
\end{center}

The five different filtration types correspond to the different configurations of $\ell(e_{1})$, $\ell(e_{2})$ and $\ell(e_{3})$. Here the labeling is as in Figure \ref{TreeN5}. As an exercise, the reader is invited to draw the five filtration types and to find the connection with the above table.  

\section{The semistable reduction type of superelliptic curves}\label{Section3}

In this section, we use the invariants introduced in the previous section to determine the semistable reduction type of a superelliptic curve $X_{n}$ defined by $y^{n}=f(x)$, where $f(x)$ is a separable polynomial and $n$ is coprime to the residue characteristic. By Theorem \ref{MainThm1}, the tropical invariants give us the tree type corresponding to the roots of $f(x)$ and we show here that this tree type is enough to determine the minimal skeleton of $X_{n}$. The latter was known in the discretely valued case by the results in \cite{supertrop}, we extend these results to the non-discrete case here using the machinery of Berkovich spaces. The main theoretical result we will use is the tame simultaneous semistable reduction theorem: 
\begin{theorem}\label{SimultaneousSemSta} {\bf{[Tame simultaneous semistable reduction theorem]}}
Let $\phi:X'\rightarrow{X}$ be a residually tame covering of smooth proper connected algebraic curves over $K$ and let $\mathcal{X}$ be a (strongly) semistable model for $(X,B)$, where $B$ is the branch locus of $\phi$. 
Let $\mathcal{X}'$ be the normalization of $\mathcal{X}$ in the function field $K(X')$. Then $\mathcal{X}'$ is (strongly) semistable and $\mathcal{X}'\rightarrow{\mathcal{X}}$ is a finite morphism of (strongly) semistable models over $R$.
\end{theorem}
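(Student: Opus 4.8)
The plan is to reduce the whole statement to the computation of the completed local rings of $\mathcal{X}'$ at closed points of its special fibre, and then to invoke Abhyankar's lemma together with the classification of tame covers of formal disks and annuli. Finiteness of $\mathcal{X}'\to\mathcal{X}$ is the easy part: $R$ is excellent, so $\mathcal{X}$ is Nagata and its normalization in the finite extension $K(X')/K(X)$ is finite over $\mathcal{X}$; being finite and flat over $R$ with generic fibre $X'$, the scheme $\mathcal{X}'$ is automatically a model of $X'$ and $\mathcal{X}'\to\mathcal{X}$ a morphism of models over $R$. Since normalization commutes with the flat base change $\mathcal{O}_{\mathcal{X},\xi}\to\widehat{\mathcal{O}}_{\mathcal{X},\xi}$ (again by excellence), and since (strong) semistability of a curve over $R$ can be tested on the completed local rings at closed points of the special fibre, it suffices to identify $\widehat{\mathcal{O}}_{\mathcal{X}',\xi'}$ for every point $\xi'$ lying over a closed point $\xi$ of $\mathcal{X}_{s}$.

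Next I would split into cases according to the structure of $(\mathcal{X},B)$. By definition of a (strongly) semistable model of the pair, $B$ meets $\mathcal{X}_{s}$ only in smooth points lying on a single component, and its closure $\overline{B}$ is étale over $R$; in particular $\overline{B}$ avoids the nodes. Hence, for a closed point $\xi\in\mathcal{X}_{s}$, exactly one of the following holds: (i) $\xi$ is a smooth point of $\mathcal{X}_{s}$ not on $\overline{B}$, so $\widehat{\mathcal{O}}_{\mathcal{X},\xi}\cong R[[x]]$ and $\phi$ is étale over the open disk $\mathrm{Spec}(R[[x]])\otimes_{R}K$; (ii) $\xi$ is a smooth point of $\mathcal{X}_{s}$ on $\overline{B}$, so $\widehat{\mathcal{O}}_{\mathcal{X},\xi}\cong R[[x]]$ with $\overline{B}=V(x)$ near $\xi$; (iii) $\xi$ is a node, so $\widehat{\mathcal{O}}_{\mathcal{X},\xi}\cong R[[x,y]]/(xy-\varpi)$ for some $\varpi\in\mathfrak{m}_{R}\setminus\{0\}$, with $\xi\notin\overline{B}$, so $\phi$ is étale over the open annulus $\mathrm{Spec}(R[[x,y]]/(xy-\varpi))\otimes_{R}K$.

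The local computations then run as follows. In case (i), a finite residually tame cover of the open unit disk that is étale on it has trivial tame fundamental group, hence is a disjoint union of trivial covers, so each $\widehat{\mathcal{O}}_{\mathcal{X}',\xi'}$ is again $\cong R[[x]]$ and $\xi'$ is a smooth point. In case (ii), Abhyankar's lemma applied to the tame cover branched along the regular divisor $V(x)$ shows that each branch of $\mathcal{X}'$ over $\xi$, after an étale base change that is a disk over $\mathrm{Spec}(R[[x]])$, is of the form $\mathrm{Spec}(R[[x^{1/e}]])$ with $\gcd(e,\mathrm{char}(k))=1$; writing $t=x^{1/e}$ this is $R[[t]]$, so $\xi'$ is smooth and $\overline{B}$ pulls back to $V(t)$. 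In case (iii), a connected residually tame cover of the open annulus of modulus $\mathrm{val}(\varpi)$, étale on it, is a Kummer cover $t\mapsto t^{e}$ with $\gcd(e,\mathrm{char}(k))=1$ onto that annulus, since the tame fundamental group of an annulus is $\widehat{\mathbf{Z}}^{(p')}$ (see \cite{ABBR1}, \cite{BPRa1}); hence $\widehat{\mathcal{O}}_{\mathcal{X}',\xi'}\cong R[[u,v]]/(uv-\varpi')$ where $u^{e}$ is a unit times $y$ and $\varpi'^{e}$ is a unit times $\varpi$, so $\xi'$ is again a node. In all three cases $\widehat{\mathcal{O}}_{\mathcal{X}',\xi'}$ is of (strongly) semistable type, and glueing over $\mathcal{X}$ (legitimate since $\mathcal{X}'\to\mathcal{X}$ is finite) gives the theorem.

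The main obstacle is the passage between the Berkovich-analytic hypothesis ``residually tame'' — a condition on the extensions of completed residue fields $\mathcal{H}(y)\to\mathcal{H}(x)$ at type-$2$ points — and the algebraic statement that the induced cover of the associated formal disk or annulus is tame in the sense needed to apply Abhyankar's lemma and the structure theory of tame covers. Once this translation is in place, which is essentially the content of \cite[Section 2.1]{TropFund1} together with the disk–annulus calculus of \cite{ABBR1} and \cite{BPRa1}, each of the three cases collapses to a short explicit computation; the remaining bookkeeping (that the cover restricted to a punctured formal neighbourhood of a node is genuinely étale, not merely étale on generic fibres, and that strong semistability is preserved) is routine given these local models.
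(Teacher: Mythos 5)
The paper does not actually prove Theorem \ref{SimultaneousSemSta}: it cites \cite[Theorem 3.1]{TropFund1} for the nondiscretely valued case and \cite[Theorem 1.1]{NewtonPuiseuxSemSta} for the discretely valued case, together with weaker Galois versions in \cite{liu2} and \cite{liu1}. So your proof is to be compared against those references, not against a proof in the present paper.

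Your overall strategy — localize to the three kinds of closed points of $\mathcal{X}_{s}$ (smooth off $\overline{B}$, smooth on $\overline{B}$, node), apply the classification of tame covers of disks and annuli, and glue — is exactly the right shape, and is essentially how \cite{NewtonPuiseuxSemSta} handles the discrete case. However, there is a real technical gap in the foundational layer. The valuation ring $R$ here is the ring of integers of a complete \emph{algebraically closed} non-archimedean field with nontrivial valuation, so the value group is a divisible, nondiscrete subgroup of $\mathbf{R}$ and $R$ is \emph{not} Noetherian. Consequently $R$ is not excellent in the usual sense, $\mathcal{X}$ is not a priori Nagata, the finiteness of normalization of $\mathcal{X}$ in $K(X')$ is not a formal consequence of excellence, and the $\mathfrak{m}$-adic completion $\widehat{\mathcal{O}}_{\mathcal{X},\xi}$ is not a tractable object (it need not be Noetherian, normalization need not commute with it, and "semistability can be tested on completed local rings" is a Noetherian statement). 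So the reduction to local models of the form $R[[x]]$ and $R[[x,y]]/(xy-\varpi)$ does not go through as written; these rings are not the right local models over a non-Noetherian $R$. Your argument, taken literally, proves the theorem over a complete discrete valuation ring (the setting of \cite{NewtonPuiseuxSemSta}), not over the $R$ considered here.

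To repair this in the nondiscrete setting one must replace the completed-local-ring calculus by one of the standard substitutes: (i) work étale-locally on $\mathcal{X}$ with finitely presented algebraic local models $R[x]$ and $R[x,y]/(xy-\varpi)$ rather than their $\mathfrak{m}$-adic completions; (ii) pass to Raynaud's formal-scheme / admissible-formal-model category and use the finiteness of normalization there; or (iii) argue directly on Berkovich analytic spaces, classifying residually tame finite covers of open disks and open annuli as in \cite[Sections 3--4]{ABBR1} and \cite[Section 2]{BPRa1}, and only then descending to algebraic models. This is in fact the route taken in \cite{TropFund1}: one shows, via the slope formula and the structure theory of tame covers of skeleta, that the preimage of a semistable vertex set is a semistable vertex set, which is the analytic incarnation of the statement you are aiming for; the algebraic consequence (normalization is semistable and finite) is then extracted from this. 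The Abhyankar-lemma computations in your cases (ii) and (iii) survive this translation essentially verbatim, but the surrounding bookkeeping has to be rebuilt on a non-Noetherian foundation, and that is the nontrivial content of the cited references rather than a routine remark.
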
 

\begin{proof}
See \cite[Theorem 3.1]{TropFund1} for the result over a nondiscrete valuation ring and \cite[Theorem 1.1]{NewtonPuiseuxSemSta} for its counterpart over a discrete valuation ring. Weaker versions of this theorem can be found in \cite[Proposition 4.30]{liu2} and \cite[Theorem 2.3]{liu1}, where the covers are assumed to be Galois with Galois group $G$ satisfying $\mathrm{char}(k)\nmid{|G|}$. 
\end{proof}

In terms of the terminology of 
\cite{TropFund1} and \cite{ABBR1}, this theorem says that for a residually tame covering $\phi: X'\to{X}$, any 
semistable vertex set $V$ of $(X,B)$ lifts to a semistable vertex set $V'$ of $(X',\phi^{-1}(B))$. Here a semistable vertex set of a marked curve $(X,B)$ is a finite set of type-$2$ points of the Berkovich analytification $X^{\mathrm{an}}$ such that $X^{\mathrm{an}}\backslash{V}$ is a disjoint union of open disks and finitely many open annuli. Here the points in $B$ are in distinct open disks. A similar result also holds for strongly semistable vertex sets.  

For a superelliptic curve $X_{n}$ with covering map $X_{n}\to\mathbf{P}^{1}$, we impose the following conditions so that we can use 
Theorem \ref{SimultaneousSemSta}. We suppose that either $\mathrm{char}(k)=0$ or $\mathrm{char}(k)$ is coprime to $n$ if $\mathrm{char}(k)\neq{0}$. The morphism $\phi$ is then automatically residually tame. For the branch locus $B$, we easily find that $B\subset{Z(f(x))}\cup\{\infty\}$. 
We now consider the metric tree in $\mathbf{P}^{1,\mathrm{an}}$ associated to the set $Z(f(x))\cup\{\infty\}\subset{\mathbf{P}^{1}(K)}$ by the construction in Remark \ref{TreeBerkovich2}. 
By taking the points in this tree of valence not equal to two, we obtain a strongly semistable vertex $V(\Sigma)$ of $(\mathbf{P}^{1},B)$ with skeleton $\Sigma$. 
For the remainder of this section, we use this skeleton $\Sigma$, which lifts to a skeleton $\Sigma(X_{n})$ of the superelliptic curve given by $y^{n}=f(x)$ by Theorem \ref{SimultaneousSemSta}. We can enhance the map of skeleta $\Sigma(X_{n})\to\Sigma$ to a tame covering of metrized complexes using \cite[Lemma 4.33]{ABBR1}. This in particular implies that the local Riemann-Hurwitz formulas hold, see \cite[Section 2.12]{ABBR1}.  

To reconstruct $\Sigma(X_{n})$ from $\Sigma$, 
we determine the following: 

\begin{enumerate}
\item ({\it{Edges}}) The number of open annuli lying over an open annulus. See \ref{FormulaEdges}. 
\item ({\it{Vertices}}) The number of points lying over a type-$2$ point of $\Sigma$.  See \ref{FormulaVertices}. 
\item ({\it{Gluing}}) A prescription for gluing the edges and vertices. See \ref{GluingData}.
\end{enumerate} 

The first two follow from the non-archimedean slope formula and local considerations as in \cite{supertrop}. These then also determine the genera of the type-$2$ points in $\Sigma(X_{n})$ by the local Riemann-Hurwitz equations. For the third, we show that the transitive $\mathbf{Z}/n \mathbf{Z}$-action completely determines the underlying graph of $\Sigma(X_{n})$.

After the proof of Theorem \ref{MainThm2}, we prove a criterion for potential good reduction in Proposition \ref{CriterionPotGoodRed}. This proposition says that a superelliptic curve has potential good reduction if and only if the tree of $f(x)$ is trivial. If we replace the zero locus of $f(x)$ by the branch locus of a map $X\to\mathbf{P}^{1}$, then the corresponding statement is false, 
see Remark \ref{CounterexampleRemark}.  We end this section by determining the minimal skeleta of curves $y^{n}=f(x)$ with $\mathrm{deg}(f(x))\leq{5}$.


\subsection{Reconstructing the skeleton}

We start by reviewing the set-up given in the beginning of the proof of \cite[Theorem 1.3]{TropFund1}. The skeleton $\Sigma$ of $\mathbf{P}^{1,\mathrm{an}}$ corresponds to a strongly semistable model $\mathcal{Y}$ over the valuation ring $R$. We write $K(x)\subset{K(x)(\alpha)}$ for the inclusion of function fields corresponding to $X\to\mathbf{P}^{1}$, where $\alpha^{n}=f(x)$. By Theorem \ref{SimultaneousSemSta}, the normalization $\mathcal{X}$ of $\mathcal{Y}$ in $K(X)$ 
is again a strongly semistable model and we have a finite morphism $\mathcal{X}\to\mathcal{Y}$.  
We can calculate the normalization near the generic point $\eta$ corresponding to a type-$2$ point $y$ as follows. 
We first write $f=f_{\eta}\cdot{}\omega$, where $v_{\eta}(f_{\eta})=0$ and $\omega\in{K}$. The element $\alpha_{\eta}=\dfrac{\alpha}{\omega^{1/n}}$ 
is then integral over $\mathcal{O}_{\mathcal{Y},\eta}$, satisfying $\alpha_{\eta}^{n}=f_{\eta}$. By our assumption on the residue characteristic, we find that $\mathcal{O}_{\mathcal{Y},\eta}\subset\mathcal{O}_{\mathcal{Y},\eta}[\alpha_{\eta}]$ is \'{e}tale and thus normal. 
The points lying over $\eta$ are thus described by the scheme
\begin{equation}\label{SchemeLocalPoints}
Z=\mathrm{Spec}(\mathcal{O}_{\mathcal{Y},\eta}[\alpha_{\eta}]),
\end{equation}
or equivalently by its base change over $\mathrm{Spec}(k(\eta))$.

\begin{lemma}\label{CalculationKummer}
Let $r$ be the largest divisor of $n$ such that $\overline{f}_{y}\in{k(\eta)}$ can be written as $\overline{h}^{r}=\overline{f}_{y}$. Then the number of points in $\mathcal{X}$ lying over $\eta\in\mathcal{Y}$ is equal $r$. Let $k=n/r$. Then the residue field extension $k(\eta)\subset{k(\eta')}$ for any point $\eta'$ lying over $\eta$ is $k(\eta)$-isomorphic to $k(\eta)[z]/(z^{k}-\overline{h})$. 
\end{lemma}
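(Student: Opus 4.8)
The setup reduces the problem to understanding the base change of $Z = \mathrm{Spec}(\mathcal{O}_{\mathcal{Y},\eta}[\alpha_\eta])$ to the residue field, i.e. the $k(\eta)$-algebra $A := k(\eta)[z]/(z^n - \overline{f}_y)$, where $\overline{f}_y$ is the reduction of the unit $f_\eta$. Since $\mathrm{char}(k) \nmid n$, the polynomial $z^n - \overline{f}_y$ is separable over $k(\eta)$ (its derivative $n z^{n-1}$ shares no root with it, as $\overline{f}_y \neq 0$), so $A$ is a finite étale $k(\eta)$-algebra, hence a product of finite separable field extensions of $k(\eta)$. The number of points of $\mathcal{X}$ over $\eta$ is the number of factors in this product, and I must identify it with the integer $r$ defined in the statement.

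The plan is to count factors by passing to a splitting field. First I would fix a primitive $n$-th root of unity $\zeta \in k(\eta)^{\mathrm{alg}}$ (available since $\mathrm{char}(k)\nmid n$) and a root $\beta$ of $z^n - \overline{f}_y$ in $k(\eta)^{\mathrm{alg}}$; then $L := k(\eta)(\zeta,\beta)$ is a Galois splitting field and $A \otimes_{k(\eta)} L \cong L^n$, with the $n$ roots being $\zeta^i \beta$. The factors of $A$ correspond to the orbits of $\mathrm{Gal}(L/k(\eta))$ on the set $\{\zeta^i\beta : 0 \le i < n\}$, and all orbits have the same size because this set is a torsor under the cyclic group $\mu_n$ acting by multiplication, and the Galois action commutes with this torsor structure appropriately — more precisely, the subgroup $H \le \mathbf{Z}/n\mathbf{Z}$ of "phases" $i$ such that $\zeta^i \beta$ lies in the same factor as $\beta$ is well-defined, and the number of factors is $n/|H|$ with each field extension of degree $|H|$. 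So it remains to show $|H| = k = n/r$, equivalently that the degree of $k(\eta)(\beta)$ over $k(\eta)$ (which is the common degree of the factors) equals $n/r$.

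This is the standard Kummer-theory computation: for a field $F$ of characteristic prime to $n$ containing $\zeta_n$, the polynomial $z^n - a$ factors according to the largest divisor $d$ of $n$ with $a \in (F^\times)^d$; one reduces to the case $\zeta_n \in k(\eta)$ by a descent argument, or argues directly with valuations. Concretely, if $\overline{f}_y = \overline{h}^r$ with $r$ maximal, write $n = rk$; then $z^n - \overline{f}_y = z^{rk} - \overline{h}^r = \prod_{j}(z^k - \zeta^{rj}\overline{h}')$ after suitable manipulation where $\overline{h}'$ is a $k$-th... — rather, the cleanest route: $\beta^k$ is a root of $w^r - \overline{f}_y$, which by maximality of $r$ is irreducible over $k(\eta)$ (an $n$-th/$r$-th root of $\overline{f}_y$ generates an extension of degree exactly $r$), giving $[k(\eta)(\beta^k):k(\eta)] = r$; and $[k(\eta)(\beta):k(\eta)(\beta^k)]$ divides $k$, while a counting/irreducibility argument on $z^k - \beta^k$ forces it to equal $k$. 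Hence $[k(\eta)(\beta):k(\eta)] = rk$... wait — I should be careful: the common factor degree is $|H|$, the number of factors is $n/|H| = r$, so $|H| = k$, and each residue field extension is $k(\eta)[z]/(z^k - \overline{h})$ after identifying $\overline{h}$ appropriately. I would write the isomorphism explicitly by sending $z \mapsto \beta / (\text{a chosen }(n/k)\text{-th power adjustment})$.

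**The main obstacle.** The genuinely delicate point is the descent handling the roots of unity: $k(\eta)$ need not contain $\zeta_n$, so the naive Kummer argument does not apply verbatim, and one must check that adjoining $\zeta_n$ does not change the factorization count of $z^n - \overline{f}_y$ in a way that breaks the equality with $r$ — i.e. that $r$ (defined over $k(\eta)$) is unaffected, or that the Galois descent of the $\mu_n$-torsor structure still yields equal orbit sizes. I expect this to be where care is needed; everything else (separability, étale-ness, the product decomposition) is routine given $\mathrm{gcd}(n,\mathrm{char}(k)) = 1$.
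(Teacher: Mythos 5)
Your general framework is the right one: reduce to the factorization of $z^n - \overline{f}_y$ in $k(\eta)[z]$ (which is \'{e}tale since $\gcd(n,\mathrm{char}\,k)=1$), and identify the number of factors with $r$ and the factor fields with $k(\eta)[z]/(z^k-\overline{h})$. The paper itself, however, gives no internal argument here — it simply cites \cite[Lemma 4.2(2)]{supertrop} — so there is nothing of the paper's to compare the route against.

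That said, there are two problems with your write-up. First, the ``main obstacle'' you single out is illusory in this paper's setting. Since $K$ is complete and algebraically closed, the residue field $k$ is algebraically closed, and $k(\eta)$ is a function field over $k$; with $\gcd(n,\mathrm{char}\,k)=1$ this forces $\mu_n \subset k \subset k(\eta)$. So there is no descent issue and no need to adjoin $\zeta_n$: all roots of unity are already present, and the awkwardness about ``$-4$-th powers'' in the irreducibility criterion for $z^m-a$ disappears because $\zeta_4 \in k(\eta)$ whenever $4\mid n$.

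Second, and more seriously, the concrete degree computation is wrong as written. You claim that $w^r - \overline{f}_y$ is irreducible over $k(\eta)$ ``by maximality of $r$.'' But $\overline{f}_y = \overline{h}^r$ by definition of $r$, so $w=\overline{h}$ is a root of $w^r-\overline{f}_y$, and in fact (with $\zeta_r \in k(\eta)$) this polynomial splits completely into linear factors: $w^r-\overline{h}^r = \prod_{j=0}^{r-1}(w-\zeta_r^j\overline{h})$. Hence $\beta^k \in k(\eta)$ and $[k(\eta)(\beta^k):k(\eta)]=1$, not $r$. You notice something is off (``wait --- I should be careful'') but never resolve it. The clean argument in this setting is: substitute $w=z^k$ to get
$z^n - \overline{f}_y = \prod_{j=0}^{r-1}\bigl(z^k - \zeta_r^j\overline{h}\bigr)$;
each factor $z^k-\zeta_r^j\overline{h}$ is irreducible over $k(\eta)$ by maximality of $r$ (if $\zeta_r^j\overline{h}$ were a $p$-th power for a prime $p\mid k$, then, because $\zeta_r^j$ is itself a $p$-th power in $k(\eta)\supset\mu_{rp}$, so would $\overline{h}$, and then $\overline{f}_y$ would be an $(rp)$-th power, a contradiction); and each factor ring is $k(\eta)$-isomorphic to $k(\eta)[z]/(z^k-\overline{h})$ via $z\mapsto\zeta_n^{j}z$. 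This gives exactly $r$ points over $\eta$ with the stated residue field extension.
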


\begin{proof}
See \cite[Lemma 4.2(2)]{supertrop}. 
\end{proof}

We will give a more explicit formula for the vertices in Lemma \ref{FormulaVertices}. First, we determine the number of edges lying above an edge. To do this, we switch to the theory of Berkovich spaces and use \cite[Theorem 4.23(2)]{ABBR1}, which says that we can determine the expansion factor using the morphism of residue curves.

In terms of Berkovich spaces, we have the following. The morphism $\phi:X\to\mathbf{P}^{1}$ is Galois with Galois group $G=\mathbf{Z}/n\mathbf{Z}$. By \cite[Theorem 4.23(1)]{ABBR1}, the morphism of Berkovich analytifications $\phi^{\mathrm{an}}:X^{\mathrm{an}}\to\mathbf{P}^{1,\mathrm{an}}$ is piecewise linear. For any bounded line segment $e'$ in $X^{\mathrm{an}}$ where $\phi^{\mathrm{an}}$ is linear, we then have a well-defined expansion factor $d_{e'}$, which is the slope of $\phi^{\mathrm{an}}$ along this segment. Using the results in \cite[Section 2]{BPRa1}, one then sees that this expansion factor is the degree of $\phi^{\mathrm{an}}$ on $e'$. 

We now use this on a specific set of line segments in $X^{\mathrm{an}}$. We will write $\phi:=\phi^{\mathrm{an}}$ to ease notation. The semistable model $\mathcal{Y}$ for $\mathbf{P}^{1}$ gives a set of open annuli $V\simeq{}\mathbf{S}_{+}(a)$ such that the induced covering $\phi^{-1}(V)\to{V}$ is {\it{Kummer}}. That is, for every connected component $U$ of $\phi^{-1}(V)$ there is an $a\in{K^{*}}$ with $\mathrm{val}(a)\geq{0}$ and an isomorphism $U\simeq{\mathbf{S}_{+}(a^{1/r})}$ such that the induced map 
\begin{equation}
\mathbf{S}_{+}(a^{1/r})\to\mathbf{S}_{+}(a)
\end{equation}  
is given by $t\mapsto{t^{r}}$. This integer $r$ is the expansion factor of $\phi$ along the skeleton of $U$ and it is the same for all connected components in $\phi^{-1}(V)$. Indeed, this easily follows from the fact that $\phi^{-1}(V)$ is a $G$-torsor. 
We denote this integer by $d_{e}$, where $e$ is the skeleton of $V$. Using the transitivity of $G$ on the connected components, we then obtain the following formula for the number $n_{e}$ of edges lying over $e$: 
\begin{equation}
n_{e}=\dfrac{n}{d_{e}}.
\end{equation}
We thus see that it suffices to know the expansion factor to know the number of edges.

We now determine this expansion factor. Let $\mathbf{P}^{1,\mathrm{an}}$ be the Berkovich analytification of the projective line and let $\mathrm{Supp}(f)\subset{\mathbf{P}^{1,\mathrm{an}}}$ be the subset of zeros and poles of $f$, considered as type-$1$ points. Consider the function 
\begin{equation}
F=-\mathrm{log}|f|
\end{equation}
on the complement $\mathbf{P}^{1,\mathrm{an}}\backslash\,{\mathrm{Supp}(f)}$. This is piecewise linear by \cite[Theorem 5.15]{ABBR1}.  
Let $y$ be a type-$2$ point in the above complement. 
For any tangent direction $w$ starting at $y$, we then have a well-defined slope $d_{w}(F)$. If $F$ is linear on a bounded line segment $e$ corresponding to $w$, then we write  
\begin{equation}
\delta_{e}(F)=|d_{w}(F)|
\end{equation}
for the absolute value of this slope. For instance, for any open annulus in the decomposition $\mathbf{P}^{1,\mathrm{an}}\backslash{V}$ with skeleton $e$, the function $F$ is automatically linear. 
\begin{lemma}\label{FormulaEdges} Let $e$ be the skeleton of an open annulus in the semistable decomposition determined by $\mathcal{Y}$. Then 
$n_{e}=\mathrm{gcd}(\delta_{e}(F),n)$.
\end{lemma}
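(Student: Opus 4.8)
\textbf{Proof plan for Lemma \ref{FormulaEdges}.}

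The plan is to combine the two descriptions of the expansion factor $d_e$ that have already been set up: the Kummer-theoretic one coming from Lemma \ref{CalculationKummer} (via the scheme $Z$ of Equation \ref{SchemeLocalPoints}), and the piecewise-linear one coming from the slope formula for $F = -\mathrm{log}|f|$. First I would fix the open annulus $V \simeq \mathbf{S}_+(a)$ with skeleton $e$, and recall from the discussion just before the lemma that $\phi^{-1}(V) \to V$ is Kummer with a uniform expansion factor $d_e$, so that $n_e = n/d_e$. So it suffices to show $d_e = n/\mathrm{gcd}(\delta_e(F), n)$, equivalently $\mathrm{gcd}(\delta_e(F),n) = n/d_e = n_e$.

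The key computation is local at a type-$2$ endpoint $y$ of $e$ (or any type-$2$ point on the interior of $e$, since $F$ is linear there). Write $f = f_\eta \cdot \omega$ with $v_\eta(f_\eta) = 0$ and $\omega \in K$ as in the setup of Lemma \ref{CalculationKummer}; the slope $\delta_e(F)$ of $-\mathrm{log}|f|$ along the tangent direction $w$ at $y$ pointing into $e$ measures how fast $\mathrm{val}(f)$ changes as one moves along the skeleton, which is exactly $v_\eta$ applied to the "linear part" of $f$ — concretely, $\delta_e(F)$ is the order of vanishing (or pole order) $\mathrm{ord}_{w}(\overline{f_y})$ of the reduction $\overline{f}_y$ in the residue curve $k(\eta)$ at the closed point corresponding to the tangent direction $w$; this is the content of \cite[Theorem 5.15]{ABBR1} / the non-archimedean slope formula. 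Now I invoke Lemma \ref{CalculationKummer}: the number of points over $\eta$ is the largest $r \mid n$ with $\overline{f}_y = \overline{h}^r$, and the residue extension is $k(\eta)[z]/(z^{n/r} - \overline{h})$. The expansion factor along $e$ is the ramification index of this extension along the tangent direction $w$, which equals $n/r$ divided by $\mathrm{gcd}(n/r, \mathrm{ord}_w(\overline{h}))$ — but since $r$ is the largest power with $\overline{f}_y = \overline{h}^r$, one checks $\mathrm{gcd}(n/r, \mathrm{ord}_w(\overline{h})) = 1$ for a suitable choice of $w$, or more robustly one argues directly: $\mathrm{ord}_w(\overline{f}_y) = \delta_e(F)$ and $\overline{f}_y = \overline{h}^r$ gives $\delta_e(F) = r \cdot \mathrm{ord}_w(\overline{h})$, so $d_e = n/r$ divides $n/\gcd(n, \delta_e(F))$, and conversely; putting $n_e = r$ one gets $n_e = \gcd(n, \delta_e(F))$ after checking the two divisibilities match. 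I would phrase this cleanly by passing to the cyclic cover $z^{n} = f_\eta$ over the completed local ring and reading off that the number of branches of $\mathrm{Spec}\,k(\eta)[z]/(z^n - \overline{f}_y)$ over the point $w$ equals $\gcd(n, \mathrm{ord}_w(\overline{f}_y)) = \gcd(n, \delta_e(F))$, which by $G$-transitivity is $n_e$.

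The main obstacle I anticipate is bookkeeping the sign and the precise identification $\delta_e(F) = \mathrm{ord}_w(\overline{f}_y)$ — i.e. making rigorous the claim that the absolute value of the slope of $-\mathrm{log}|f|$ along the skeleton of the annulus equals the vanishing order of the reduced function along the corresponding tangent direction. This is standard (it is essentially the statement that the retraction of a type-$1$ zero of $f$ to $\Sigma$ contributes its multiplicity to the slope, as in \cite[Section 5]{ABBR1} and \cite[Section 5]{BPRa1}), but one must be careful that $e$ is an edge of the skeleton $\Sigma$ chosen so that no zeros or poles of $f$ lie on $e$ itself, so that $F$ is genuinely linear on $e$ with integer slope. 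Once that identification is in hand, the arithmetic with $\gcd$'s and the reduction to Lemma \ref{CalculationKummer} via $G = \mathbf{Z}/n\mathbf{Z}$-transitivity is routine.
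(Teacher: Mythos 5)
Your closing ``clean'' paragraph is essentially the paper's argument: identify $\delta_e(F)$ with $\mathrm{ord}_w(\overline{f}_y)$ via the non-archimedean slope formula, translate the expansion factor along $e$ into a ramification index of $C_x\to C_y$ at the closed point $z$ corresponding to $e$ using \cite[Theorem 4.23(2)]{ABBR1}, and then count branches of the Kummer cover of $C_y$ cut out by $z^n-\overline{f}_y$ above that point, finishing with $G$-transitivity. The paper organizes the branch count through Lemma~\ref{CalculationKummer} (factor $\overline{f}_y = \overline{h}^{\,r}$, apply \cite[Lemma 4.2]{supertrop} to get $e_z = (n/r)/\gcd(w_z(\overline{h}),n/r)$, then invoke the identity $\tfrac{1}{r}\gcd(w_z(\overline{f}_y),n) = \gcd(w_z(\overline{f}_y)/r,\,n/r)$); your suggestion to read off the number of local branches of $z^n - ut^m$ over $k((t))$ directly as $\gcd(n,m)$ is an equivalent and somewhat shorter arithmetic, so the final route does close.

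That said, the ``more robust'' middle paragraph contains statements that are false and should not be written down. (a) The claim ``$\gcd(n/r,\mathrm{ord}_w(\overline{h}))=1$ for a suitable choice of $w$'' does not hold: $r$ is characterized globally (it is the largest $r$ with $\overline{f}_y$ an $r$-th power in $k(C_y)$), so it imposes no constraint at a particular place $w$ — for a fixed edge $e$ the $\gcd$ can easily be $>1$, and one has no freedom to choose $w$ since it is forced by $e$. (b) You write ``$d_e = n/r$''; in fact $d_e = (n/r)/\gcd(n/r,\mathrm{ord}_w(\overline{h}))$, which strictly divides $n/r$ whenever the $\gcd$ is $>1$. (c) ``Putting $n_e = r$'' conflates the number of edges above $e$ with the number of vertices above $y$ (the number from Lemma~\ref{FormulaVertices}); the correct relation is $n_e = n/d_e = r\cdot\gcd(n/r,\mathrm{ord}_w(\overline{h}))$, and then $r\cdot\gcd(n/r,\mathrm{ord}_w(\overline{h})) = \gcd(n,\delta_e(F))$ follows exactly from $\delta_e(F)=r\,\mathrm{ord}_w(\overline{h})$, $r\mid n$, and the standard gcd identity. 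None of these errors infects your final paragraph since you discard them there, but if you had followed the ``robust'' branch of your own plan the argument would not have gone through as stated.
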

\begin{proof}
Let $y$ be a type-$2$ point corresponding to an endpoint of $e$ and let $x\in{X^{\mathrm{an}}_{n}}$ be any point lying over $y$. In terms of the above semistable models, $y$ corresponds to the generic point $\eta_{y}$ of an irreducible component of $\mathcal{Y}_{s}$ and $x$ corresponds to a point $\eta_{x}$ of the scheme 
$Z$ in Equation \ref{SchemeLocalPoints} lying over $\eta_{y}$. By Lemma \ref{CalculationKummer} the extension of residue fields $k(\eta_{y})\subset{k(\eta_{x})}$ is described by the polynomial $z^{k}-\overline{h}$, where $\overline{h}^{r}=\overline{f}_{y}$.  
We denote their residue curves by $C_{x}$ and $C_{y}$. These are the smooth proper curves corresponding to the residue fields of $\eta_{x}$ and $\eta_{y}$ respectively. 
Note that the edge $e$ corresponds to a closed point on the curve $C_{y}$. We denote this point by $z$. By \cite[Theorem 4.23(2)]{ABBR1}, the expansion factor along $e$ is equal to the ramification index of the covering $C_{x}\to{C_{y}}$ at any point lying over $z$. This point $z$ corresponds to a discrete valuation $w_{z}(\cdot{})$ of the function field $k(C_{y})=k(\eta_{y})$. By \cite[Lemma 4.2]{supertrop}, the ramification index is given by the formula
\begin{equation}\label{RamificationDegrees}
e_{z}=\dfrac{k}{\mathrm{gcd}(w_{z}(\overline{h}),k)}.
\end{equation}
By the non-archimedean slope formula \cite[Theorem 5.15(3)]{BPRa1}, we then have $rw_{z}(\overline{h})=w_{z}(\overline{f}_{y})=\delta_{e}(F)$. Using the identity $$\dfrac{1}{r}\cdot{}\mathrm{gcd}(w_{z}(\overline{f}_{y}),n)=\mathrm{gcd}(\dfrac{w_{z}(\overline{f}_{y})}{r},\dfrac{n}{r})$$ from elementary number theory and the expression for $e_{z}$ in Equation \ref{RamificationDegrees}, we then directly obtain the statement of the Lemma. 

\end{proof}

\begin{lemma}\label{FormulaVertices}
Let $y$ be a type-$2$ point, let $m$ be the greatest common divisor of all the $\delta_{e}(F)$ for the outgoing edges $e$ at $y$ and let $n_{y}$ be the number of points lying over $y$. Then 
\begin{equation}
n_{y}=m.
\end{equation}
\end{lemma}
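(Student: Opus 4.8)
The plan is to mirror the structure of the proof of Lemma \ref{FormulaEdges}, replacing the analysis at a single tangent direction by a global analysis at the residue curve $C_y$. Recall from Lemma \ref{CalculationKummer} that if $r$ is the largest divisor of $n$ with $\overline{f}_y = \overline{h}^{\,r}$ for some $\overline{h}\in k(\eta_y)$, then the number of points $n_y$ lying over $y$ equals $r$. So the statement to prove is the purely number-theoretic identity
\begin{equation}
r = \gcd\bigl(\{\delta_e(F) : e \text{ outgoing at } y\}, n\bigr).
\end{equation}

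First I would unwind both sides. For each outgoing edge $e$ at $y$, let $z_e$ be the corresponding closed point of $C_y$, with normalized valuation $w_{z_e}$ on $k(C_y) = k(\eta_y)$. By the non-archimedean slope formula \cite[Theorem 5.15(3)]{BPRa1} (exactly as used in Lemma \ref{FormulaEdges}), $w_{z_e}(\overline{f}_y) = \delta_e(F)$, and since $\overline{f}_y = \overline{h}^{\,r}$ this gives $\delta_e(F) = r\cdot w_{z_e}(\overline{h})$. Hence $\gcd(\{\delta_e(F)\}, n) = \gcd(r \cdot \gcd_e w_{z_e}(\overline{h}),\, n)$. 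Because $r \mid n$, this equals $r \cdot \gcd\bigl(\gcd_e w_{z_e}(\overline{h}),\, n/r\bigr)$, so it remains to show that $\gcd\bigl(\gcd_e w_{z_e}(\overline{h}),\, n/r\bigr) = 1$, i.e. that the $w_{z_e}(\overline{h})$ over the outgoing edges, together with $k := n/r$, have no common factor.

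The key input is the maximality of $r$: if a prime power $p^s$ divided $\gcd_e w_{z_e}(\overline{h})$ and also divided $k = n/r$, I claim $\overline{f}_y$ would be an $(rp^s)$-th power in $k(\eta_y)$, contradicting maximality of $r$ (since $rp^s \mid n$). Here is where the global nature of $C_y$ matters: the divisor of $\overline{h}$ on the proper curve $C_y$ is supported only at closed points corresponding to outgoing edges at $y$ — this is precisely the content of the slope formula applied to $F = -\log|f|$, which is harmonic (slope-balanced) away from $\mathrm{Supp}(f)$, so $\overline{h}$ has no zeros or poles at points of $C_y$ not corresponding to tangent directions in $\Sigma$. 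If every $w_{z_e}(\overline{h})$ is divisible by $p^s$, then $\mathrm{div}(\overline{h}) = p^s \cdot D$ for an effective-minus-effective divisor $D$ of degree $0$; since $C_y$ is a curve over the algebraically closed field $k$, a degree-zero divisor that is $p^s$ times another divisor, attached to a function, means $\overline{h} = u \cdot \overline{g}^{\,p^s}$ for a unit $u \in k^*$ and some $\overline{g} \in k(\eta_y)$ — here one uses that $k$ is algebraically closed so $u$ is itself a $p^s$-th power, or absorbs it. Then $\overline{f}_y = \overline{h}^{\,r} = (\overline{g}^{\,r})^{p^s} \cdot u^r$, an $(rp^s)$-th power up to the unit, and again algebraic closedness of $k$ lets us absorb the unit. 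This contradicts maximality of $r$, so no such $p^s$ exists, giving $\gcd\bigl(\gcd_e w_{z_e}(\overline{h}),\, k\bigr) = 1$ and hence $n_y = r = m$.

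The main obstacle I anticipate is the divisor-theoretic step: justifying carefully that a function $\overline{h}$ whose divisor is divisible by $p^s$ (at all its zeros and poles, which all lie among the outgoing-edge points by the slope formula / harmonicity) is, up to a constant, a $p^s$-th power of a rational function on $C_y$. This is standard over an algebraically closed field (it amounts to the Kummer-theory statement that $k(C_y)^* / (k(C_y)^*)^{p^s} \hookrightarrow \mathrm{Div}(C_y)/p^s\,\mathrm{Div}(C_y)$ has the expected behavior, combined with $k^* = (k^*)^{p^s}$), but it should be spelled out, and one must be slightly careful when $\mathrm{char}(k) = p$ — however, that case is excluded here since $\gcd(n, \mathrm{char}(k)) = 1$ forces $p \nmid n$, so the exponents $p^s$ in question are prime to $\mathrm{char}(k)$ and ordinary Kummer theory applies cleanly. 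Everything else is bookkeeping with $\gcd$'s of the kind already carried out in Lemma \ref{FormulaEdges}.
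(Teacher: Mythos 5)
Your reduction is correct and matches the paper's: you identify $n_{y}$ with the largest divisor $r$ of $n$ for which $\overline{f}_{y}=\overline{h}^{\,r}$ via Lemma \ref{CalculationKummer}, use the slope formula to write $\delta_{e}(F)=r\,w_{z_{e}}(\overline{h})$, and reduce the claim to $\gcd\bigl(\gcd_{e} w_{z_{e}}(\overline{h}),\, n/r\bigr)=1$.

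The gap is in the justification of the divisor-theoretic step, which you correctly flag as the crux but then misattribute. You claim that over an algebraically closed base field, any $\overline{h}\in k(C_{y})^{*}$ with $\mathrm{div}(\overline{h})=p^{s}D$ is a $p^{s}$-th power up to a constant, i.e.\ that $k(C_{y})^{*}/(k(C_{y})^{*})^{p^{s}}\to\mathrm{Div}(C_{y})/p^{s}\,\mathrm{Div}(C_{y})$ is injective. This is false for any curve of positive genus: a nontrivial class $[D]\in\mathrm{Pic}^{0}(C_{y})[p^{s}]$ (nonzero when $g(C_{y})>0$ and $p\neq\mathrm{char}(k)$) satisfies $p^{s}D=\mathrm{div}(g)$ for some $g\in k(C_{y})^{*}$ that cannot be a $p^{s}$-th power, since otherwise $D$ would be principal. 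The fact you actually need---and the one the paper invokes explicitly---is that $y$ is a type-$2$ point of $\mathbf{P}^{1,\mathrm{an}}$, so $C_{y}\simeq\mathbf{P}^{1}_{k}$ and its class group is trivial. Then your degree-zero divisor $D$ is automatically principal, say $D=\mathrm{div}(\overline{g})$, and $\overline{h}/\overline{g}^{\,p^{s}}$ has trivial divisor, hence is a nonzero constant that can be absorbed into $\overline{g}$ because $k=\overline{k}$. Algebraic closedness alone does not give the injectivity you invoke; the triviality of $\mathrm{Pic}(\mathbf{P}^{1}_{k})$ does. With this correction the argument coincides with the one the paper sketches via \cite[Proposition 4.3(3)]{supertrop}.
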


\begin{proof}
By Lemma \ref{CalculationKummer}, the number of points lying over $y$ is equal to the highest divisor $r$ of $n$ such that $\overline{f}_{y}=\overline{h}^{r}$. Using the non-archimedean slope formula \cite[Theorem 5.15]{BPRa1} and the fact that the class group of $\mathbf{P}^{1}_{k}$ is trivial, we then easily find the desired formula. See \cite[Proposition 4.3(3)]{supertrop} for more details. 

\end{proof}

We now consider the third problem of gluing these edges and vertices. 
We start with a vertex $v'\in{V(\Sigma(X_{n}))}$ with image $v\in{V(\Sigma)}$. We then choose an adjacent edge $e$ of $v$ and an edge $e'$ such that $v'$ is connected to $e'$. We then continue with the other endpoint $w$ of $e$ and we choose a vertex $w'$ such that $e'$ is connected to $w'$. Continuing in this way, we obtain a subgraph $\Sigma'$ of $\Sigma(X_{n})$ isomorphic to the tree $\Sigma$. 

We now write $D_{v}$ and $D_{e}$ for the stabilizer of any vertex $v'$ or edge $e'$ lying over $v$ or $e$ respectively. These groups are independent of the vertex or edge chosen since the Galois group is abelian. We also refer to these as the decomposition groups of the vertices and edges. Since $G$ is moreover cyclic, we  find that the order of $D_{v}$ or $D_{e}$ completely determines the subgroup. These orders in turn are determined by the formulae in Lemmas \ref{FormulaEdges} and \ref{FormulaVertices}. Using this, we can reconstruct $\Sigma(X_{n})$ from $\Sigma$: 

\begin{lemma}\label{GluingData}
The underlying graph of the skeleton $\Sigma(X_{n})$ of the superelliptic curve $X_{n}$ is completely determined by the orders of the stabilizers $D_{v}$ and $D_{e}$ for the vertices and edges of $\Sigma$.
\end{lemma}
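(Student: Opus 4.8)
The plan is to reconstruct $\Sigma(X_n)$ as a quotient graph of a cover of $\Sigma$ by the Galois group $G = \mathbf{Z}/n\mathbf{Z}$, where the cover is encoded entirely by the subgroups $D_v, D_e \subseteq G$. Since $G$ is cyclic, a subgroup is determined by its order, and those orders are exactly $n_v = m$ (Lemma \ref{FormulaVertices}) and $d_e$ (which by Lemma \ref{FormulaEdges} equals $n/n_e = n/\gcd(\delta_e(F),n)$); both are manifestly computable. So the claim reduces to: given $\Sigma$ together with, for each $v \in V(\Sigma)$ and $e \in E(\Sigma)$, the orders $|D_v|$ and $|D_e|$, the graph $\Sigma(X_n)$ is uniquely (up to isomorphism) determined.

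First I would record the compatibility relations these decomposition groups satisfy. If $v$ is an endpoint of $e$, then a vertex $v'$ over $v$ adjacent to an edge $e'$ over $e$ forces $D_{e'} \subseteq D_{v'}$, hence (after translating by $G$, which is abelian so stabilizers are independent of the chosen lift) $D_e \subseteq D_v$, i.e.\ $|D_e| \mid |D_v|$. This is the orbit--stabilizer bookkeeping: the number of edges over $e$ adjacent to a fixed vertex $v'$ over $v$ equals $[D_v : D_e] = |D_v|/|D_e|$, while the total number of edges over $e$ is $n/|D_e|$ and the number of vertices over $v$ is $n/|D_v|$. Next I would build the spanning lift $\Sigma' \subseteq \Sigma(X_n)$ described just before the lemma statement (choose $v'$ over the root, then inductively extend along each edge), which gives a section-like copy of $\Sigma$ inside $\Sigma(X_n)$; every vertex and edge of $\Sigma(X_n)$ is a $G$-translate of a vertex or edge of $\Sigma'$, with $g \cdot x = g' \cdot x$ iff $g^{-1}g'$ lies in the stabilizer of $x$. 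Concretely: the vertices over $v$ are in bijection with $G/D_v$, the edges over $e$ with $G/D_e$, and an edge-coset $gD_e$ attaches to the vertex-cosets $gD_v$ at each endpoint $v$ of $e$ (well-defined because $D_e \subseteq D_v$). This description of the vertex set, edge set, and incidence uses only $\Sigma$ and the collection of orders $\{|D_v|\}, \{|D_e|\}$, so any two skeleta with the same such data are canonically isomorphic, which is the assertion.

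To finish I would verify that the combinatorial object just constructed really is (isomorphic to) $\Sigma(X_n)$: its edges are the skeleta of the connected components of $\phi^{-1}(\mathbf{S}_+(a))$ for the open annuli $\mathbf{S}_+(a)$ in the semistable decomposition from $\mathcal{Y}$, and $G$ permutes these components transitively over each fixed annulus (the covering is a $G$-torsor over $\mathbf{S}_+(a)$ since it is \'etale there), with the stabilizer of a component being the group $D_e$ of order $d_e$; similarly the type-$2$ points over $y$ are a $G$-set with stabilizer $D_v$ of order $n_y = m$ by Lemmas \ref{CalculationKummer} and \ref{FormulaVertices}; and incidences in $X_n^{\mathrm{an}}$ are respected by $\phi$, giving exactly the coset-incidence above. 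I expect the main obstacle to be purely expository rather than mathematical: namely making airtight the claim that the $G$-equivariant incidence data on $\Sigma(X_n)$ is \emph{forced} by the orders alone — one must check that the choice of spanning lift $\Sigma'$ does not matter and that $G$ cyclic genuinely removes all ambiguity (for a non-cyclic group the subgroup is not determined by its order, and indeed the statement would fail), so the key point to isolate is that $D_v$ and $D_e$ are subgroups of a fixed cyclic group and that the inclusion $D_e \subseteq D_v$ together with the orders pins down the attaching maps completely.
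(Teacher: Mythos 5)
Your proposal is correct and takes essentially the same approach as the paper: the paper glues $n$ copies of the spanning lift $\Sigma'$ indexed by $G$ by identifying $v'_i \sim v'_j$ when $i-j \in D_v$ (and likewise for edges), which is literally the coset description you give — vertices over $v$ are $G/D_v$, edges over $e$ are $G/D_e$, and $gD_e$ attaches to $gD_v$ — with the $G$-torsor structure away from the branch locus furnishing well-definedness, transitivity giving surjectivity, and the decomposition groups giving injectivity. Your explicit observation that $D_e \subseteq D_v$ and that cyclicity of $G$ makes the subgroup recoverable from its order fills in exactly the bookkeeping the paper relegates to "set-theoretic details," and your flagged concern about independence of the spanning lift is resolved since $\Sigma$ is a tree, so any two lifts differ by a translation in $G$.
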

\begin{proof}
We reconstruct $\Sigma(X_{n})$ from $\Sigma'$ as follows. 
We take $n$ copies of $\Sigma'$, indexed by $G=\mathbf{Z}/n\mathbf{Z}$. We impose an equivalence relation on these $n$ copies $\Sigma'_{i}$ as follows. Two vertices $v'_{i}$ and $v'_{j}$ lying over $v$ are equivalent if and only if the images of $i$ and $j$ in the quotient $G/D_{v}$ are the same. We similarly define the equivalence relation for the edges. The quotient $\bigsqcup{\Sigma'_{i}}/\sim$ then has a natural $G$-action and it is isomorphic to $\Sigma(X_{n})$. The isomorphism is as follows. We send a vertex $v'_{i}$ to the vertex $\sigma_{i}(v')$, where $\sigma_{i}$ is the automorphism corresponding to $i\in\mathbf{Z}/n\mathbf{Z}$. We similarly define the map for the edges. That this map is well-defined follows from the fact that $X^{\mathrm{an}}$ is a $G$-torsor outside the branch locus. 
The transitivity of this action then implies that this map is surjective and the definition using the decomposition groups shows that it is injective. We leave it to the reader to fill in the set-theoretic details.      

\end{proof}

Using Lemma \ref{GluingData}, we can now prove the second Main Theorem of this paper:

\begin{reptheorem}{MainThm2}{\bf{[Main Theorem]}}
For any $n\geq{2}$, let $X_{n}$ be the superelliptic curve over a complete algebraically closed non-archimedean field $K$ defined by $y^{n}=f(x)$, where $f(x)$ is a separable polynomial of degree $d$. We suppose that $\mathrm{gcd}(n,\mathrm{char}(k))=1$. Then the weighted metric graph $\Sigma(X_{n})$ of $X_{n}$ is completely determined by the tropical invariants of $f(x)$.  
\end{reptheorem}   
\begin{proof}
By Theorem \ref{MainThm1}, the tropical invariants completely determine the marked tree filtration 
associated to $f(x)$. 
The absolute slopes $\delta_{e}(F)$ of the piecewise-linear function $F=-\mathrm{log}|f|$ on $\mathbf{P}^{1,\mathrm{an}}$ can then be recovered by a recursive procedure, see \cite[Lemma 2.6]{supertrop}. This determines the orders of the decomposition groups by Lemmas \ref{FormulaEdges} and \ref{FormulaVertices}. By Lemma \ref{GluingData}, we can then recover the underlying graph of the skeleton $\Sigma(X_{n})$.  
Furthermore, the edge lengths are given by 
\begin{equation}\label{EdgeLengthsFormula}
\ell(e')=\dfrac{\ell(e)}{|D_{e}|}
\end{equation}  
and the genera of the vertices $v'$ are given by the local Riemann-Hurwitz formulas given in \cite[Section 2.12]{ABBR1} for generically \'{e}tale morpisms. Here the fact that $\Sigma(X_{n})\to\Sigma$ is generically \'{e}tale is contained in \cite[Lemma 4.33]{ABBR1}.  
This finishes the proof. 
\end{proof}

\begin{rem}
Suppose we are given a superelliptic curve $X_{n}$ over a complete discretely valued non-archimedean field $K$. One can then consider the group of connected components $\Phi(\mathcal{J})$ of the special fiber of the N\'{e}ron model $\mathcal{J}$ of the Jacobian $J$ of $X_{n}$. For a finite extension $K'$ of $K$ over which $X_{n}$ attains semistable reduction, we then have that the group $\Phi(\mathcal{J}_{R'})${\footnote{This is the corresponding group for the N\'{e}ron model of the base change $J\times_{K}{K'}$; note that $\mathcal{J}_{R'}$ cannot simply be recovered as the base of change $\mathcal{J}$ over $R'$.}} 
is completely determined by the metric graph $\Sigma(X_{n})$. Indeed, this follows from the results in \cite{Baker2008}. We thus see that Theorem \ref{MainThm2} can be used to calculate the component groups for the Jacobians of superelliptic curves. In this case the extension $K'$ can be made explicit: we can take the composite of the splitting field of $f(x)$ and $K(\varpi^{1/n})$, where $\varpi$ is a uniformizer of $K$.  
\end{rem}

\subsection{A criterion for potential good reduction and explicit examples}\label{SectionPotentialReduction}

In this section we prove a criterion for potential good reduction for superelliptic curves. Here we say that a curve $X$ has potential good reduction if the minimal skeleton of $X$ consists of a single vertex of genus $g(X)$. After this, we determine the reduction types of the curves $X_{n}: y^{n}=f(x)$ 
for all $f(x)$ of degree $d\leq{5}$ using the results in Section \ref{SectionExamplesTrees}.  

\begin{prop}\label{CriterionPotGoodRed}
Let $X_{n}$ be a superelliptic curve. Then $X_{n}$ has potential good reduction if and only if the phylogenetic type of the tree associated to the branch locus is trivial. 
\end{prop}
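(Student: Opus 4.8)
The plan is to reduce the statement to the combinatorial characterization of trivial phylogenetic trees and then use the structural results already in hand, namely Lemmas \ref{FormulaEdges}, \ref{FormulaVertices} and \ref{GluingData} together with the edge-length formula \eqref{EdgeLengthsFormula}. First I would observe that for a superelliptic curve $X_n$ the branch locus $B$ is contained in $Z(f(x))\cup\{\infty\}$, so the associated metric tree $\Sigma$ in $\mathbf{P}^{1,\mathrm{an}}$ built in Remark \ref{TreeBerkovich2} is exactly the tree attached to $f(x)$ (with the marked leaf $\infty$). By definition, $X_n$ has potential good reduction precisely when its minimal weighted metric graph $\Sigma(X_n)$ has first Betti number zero, i.e.\ $\Sigma(X_n)$ is a tree. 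So the task is: $\Sigma(X_n)$ is a tree $\iff$ the phylogenetic type of $\Sigma$ is trivial.

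For the easy direction, suppose the phylogenetic type of $\Sigma$ is trivial. Then after contracting leaves $\Sigma_{\min}$ is a single vertex $v$, and all the $\mathrm{val}(\alpha_i-\alpha_j)$ agree; the function $F=-\mathrm{log}|f|$ has a single slope value on each ray and, crucially, no bounded edge survives in the minimal skeleton of $\mathbf{P}^1$. I would argue that $\phi^{-1}(\Sigma_{\min})$ has no loops: the only type-$2$ point involved is $v$, lying below a single type-$2$ point (or several, but joined by no bounded edges), so $\Sigma(X_n)_{\min}$ contracts to a disjoint union of vertices, which is connected since $X_n$ is connected, hence a single vertex. Its genus must then be $g(X_n)$ because the total genus is preserved (Riemann--Hurwitz on metrized complexes, \cite[Section 2.12]{ABBR1}), giving potential good reduction.

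For the converse I would argue contrapositively: if the phylogenetic type of $\Sigma$ is nontrivial, then $\Sigma$ contains a bounded edge $e$ with $\ell(e)>0$ joining two type-$2$ points, and I must produce a cycle in $\Sigma(X_n)$. The key point is that over such an internal edge $e$ the covering $\phi$ restricts to a Kummer covering $\mathbf{S}_+(a^{1/d_e})\to\mathbf{S}_+(a)$, and the two endpoint vertices of $e$ each lie below type-$2$ points of $\Sigma$ with at least two outgoing directions with distinct slope behaviour of $F$. Using Lemmas \ref{FormulaEdges} and \ref{FormulaVertices} one computes the stabilizer orders $|D_e|$, $|D_v|$, $|D_w|$; because $e$ is an internal edge of the tree $\Sigma$, at each endpoint there is genuine branching of the slope data, which forces $D_v\subsetneq D_e$ or $D_w\subsetneq D_e$ (a strict containment of decomposition groups). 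By the gluing recipe of Lemma \ref{GluingData}, an edge class over $e$ then has its two endpoints identified among strictly fewer copies at one end than the edge itself is split into, which produces parallel edges or, combined along a path back to the root, a nontrivial loop in $\bigsqcup \Sigma_i'/\!\sim\,\cong\Sigma(X_n)$. Hence $h^1(\Sigma(X_n))>0$ and $X_n$ does not have potential good reduction.

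The main obstacle is making the converse rigorous: one has to be careful that a strict containment $D_v\subsetneq D_e$ really occurs at some internal edge and genuinely creates a cycle rather than merely multiplying vertices. The cleanest route is probably to work directly with the first Betti number via the formula $h^1(\Sigma(X_n)) = \sum_{e}(n/|D_e| ) - \sum_{v}(n/|D_v|) + 1$ over the finite graph $\Sigma$ (Euler characteristic of the quotient graph), and show this vanishes if and only if $|D_e|=|D_v|$ for all $e\ni v$, which by Lemmas \ref{FormulaEdges}--\ref{FormulaVertices} happens exactly when $\delta_e(F)$ is constant across the tree, i.e.\ the phylogenetic type is trivial. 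This turns the geometric claim into a bookkeeping identity and sidesteps delicate loop-chasing. I would also remark (as the paper does in Remark \ref{CounterexampleRemark}) that replacing $Z(f)$ by an abstract branch locus breaks the statement, which is a useful sanity check that the argument genuinely uses $B\subseteq Z(f)\cup\{\infty\}$.
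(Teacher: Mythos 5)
Your easy direction is essentially correct and close in spirit to the paper's argument (which invokes Theorem \ref{SimultaneousSemSta} directly: a single-vertex semistable vertex set for $(\mathbf{P}^1,B)$ lifts to one for $X_n$). The hard direction, however, rests on an incorrect characterization that the rest of the argument inherits.

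You assert at the outset that \emph{``$X_n$ has potential good reduction precisely when $\Sigma(X_n)$ has first Betti number zero''} and then reduce the converse to showing $h^1(\Sigma(X_n))>0$. This is false. Potential good reduction, as defined in Section \ref{SectionPotentialReduction}, means the minimal skeleton is a \emph{single vertex} of genus $g(X_n)$; a tree with two or more positive-genus vertices has $h^1=0$ but is not a single vertex. Concretely, take $n=3$ and $f$ of degree $4$ with exactly one pair of roots close together (tree type II.2 in Section \ref{SectionDegreeFour}). One computes $\delta_e(F)=2$ along the unique internal edge, so $n_e=\gcd(2,3)=1$ and $n_{v_\infty}=n_w=1$, hence $|D_e|=|D_{v_\infty}|=|D_w|=3$ and $h^1(\Sigma(X_3))=1-2+1=0$. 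But Table \ref{TableD4} (type II.2, $n\equiv 1\bmod 2$) gives $g(v_1)=1$, $g(v_2)=2$: the minimal skeleton is an edge joining two positive-genus vertices, and $X_3$ does \emph{not} have potential good reduction, even though the phylogenetic type is nontrivial. Both your Euler-characteristic criterion (``$h^1=0\iff|D_e|=|D_v|$ for all $e\ni v\iff$ trivial tree'') and your first attempt via strict containment of decomposition groups are refuted by this example: all decomposition groups are equal, there is no strict containment, and there is no cycle. (As an aside, your inclusions are written the wrong way round: continuity forces $D_e\subseteq D_v$, not $D_v\subseteq D_e$.)

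The missing ingredient is the genus of the residue curves, which a purely graph-theoretic count cannot see. The paper's converse argument assumes potential good reduction, picks a maximal type-$2$ point $w$ at which at least two paths from branch points to the good-reduction vertex $v$ merge, and notes that the residue curve $C_{w'}$ over $w$ must then have genus $0$. It first shows $|D_{w'}|=|D_{e'}|$ (otherwise two lifts of the edge $e$ toward $v$ would close up into a cycle, contradicting good reduction), so the edge toward $v$ is totally ramified for the residue covering $C_{w'}\to\mathbf{P}^1_k$. Lemma \ref{RiemannHurwitz} then forces the branch locus of $C_{w'}\to\mathbf{P}^1_k$ to have exactly two points, contradicting the choice of $w$ as a genuine merge point of at least two branch-locus paths. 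This Riemann--Hurwitz step on the residue curve is precisely what your proposal omits, and it is not optional: the graph data alone does not distinguish ``skeleton is a tree'' from ``skeleton is a point.''
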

\begin{proof}
If the phylogenetic type is trivial, then the lifted skeleton $\Sigma(X_{n})$ consists of only one vertex. Indeed, 
we then have a semistable vertex set of $\mathbf{P}^{1,\mathrm{an}}$ consisting of a single point, which lifts by Theorem \ref{SimultaneousSemSta}, so we see that $X$ has potential good reduction. Note that we did not use that $X_{n}\to\mathbf{P}^{1}$ is superelliptic here. 
Conversely, suppose that $X_{n}$ has potential good reduction. 
Consider the point $v$ in $\mathbf{P}^{1,\mathrm{an}}$ that is the image of the point with good reduction. For any point $P$ in the branch locus, there is a unique path from $P$ to $v$. We claim that these paths do not meet before $v$. This then shows that the tree associated to the branch locus is trivial.

Suppose on the other hand that at least two of these paths meet. We can consider a maximal point $w$ where at least two of these meet, in the sense that all paths from points in the branch locus that go through $w$ have unique tangent directions. We consider a point $w'$ lying over $w$ and  
 claim that $|D_{w'}|=|D_{e'}|$, where $e'$ is any edge lying over the $e$ in the direction of $v$.  Granted this, we can now use the following:
\begin{lemma}\label{RiemannHurwitz}
Let $k$ be a field and let $\mathbf{P}_{k}^{1}\to\mathbf{P}_{k}^{1}$ be a finite Galois morphism with Galois group $\mathbf{Z}/r\mathbf{Z}$ suppose that $(r,\mathrm{char}(k))=1$. If there 
is a branch point with ramification index $r$, then the branch locus is of order two and both points have ramification index $r$. 
\end{lemma}
\begin{proof}
Let $Q$ be the given branch point with $e(Q)=r$ and let $B_{Q}=B\backslash\{Q\}$. Using the Riemann-Hurwitz formula, we obtain 
\begin{equation}
-2=-2r+(r-1)+R,
\end{equation}
where $R=\sum_{P\in{B_{Q}}}{(r-\dfrac{r}{e(P)})}$. 
Let $s=|B_{Q}|$. 
We then have $s\geq{1}$. Rewriting the above equation, we obtain
\begin{equation}\label{ComputationEquationRH}
\sum_{P\in{B_{Q}}}{\dfrac{r}{e(P)}}=(s-1)r+1.
\end{equation} 
Then $e(P)\geq{2}$ and thus $\dfrac{r}{e(P)}\leq{\dfrac{r}{2}}$ for every $P\in{B_{Q}}$. 
If we combine this with \ref{ComputationEquationRH}, then we obtain $(s-1)r+1\leq{\dfrac{sr}{2}}$. But for $s\geq{2}$, this is not true as one easily sees using induction. We conclude that $s=1$, which forces $e(P)=r$, as desired. 

\end{proof}
We now continue the proof. Since $X_{n}$ has potential good reduction, we find that the genus of the curve corresponding to $w'$ is $0$. We are thus in the scenario of Lemma \ref{RiemannHurwitz} with $r=|D_{w'}|$. The assumption that at least two points in the branch locus meet at $w$ now directly gives a contradiction. We thus see that we only have to show that $|D_{w'}|=|D_{e'}|$. The following lemma gives us one inequality: 
\begin{lemma}
For every $e'$ adjacent to $w'$, we have $D_{e'}\subset{D_{w'}}$. 
\end{lemma} 
\begin{proof}
Indeed, suppose that there exists a $\sigma\in{D_{e'}}$ that does not fix $w'$. Continuity of $\sigma$ then implies that $w'$ is sent to the other endpoint $v'$ of $e'$. But then $v'$ and $w'$ are in the same orbit and are thus mapped to the same point in $\mathbf{P}^{1,\mathrm{an}}$, a contradiction. 
\end{proof}
 Suppose now for a contradiction that $|D_{w'}|>|D_{e'}|$. We can then find an automorphism $\sigma\in\mathbf{Z}/n\mathbf{Z}$ that fixes $w'$, but not $e'$. We thus have two distinct edges $e'$ and $\sigma(e')$ starting at $w'$. These two edges can be extended to give two paths from $w'$ to the vertex of good reduction. 
 A moment of reflection shows that the composition of the first with the inverse of the second 
 gives a non-trivial homology class in $\Sigma(X_{n})$, a contradiction. We conclude that none of the paths from points in the branch locus to the vertex $v$ meet before the vertex $v$. This implies that the phylogenetic type of the branch locus is trivial, as desired.

\end{proof}

Using the results in Section \ref{SectionExamplesTrees}, we now immediately obtain the following practical criterion for potential good reduction: 

\begin{cor}
Consider a superelliptic curve $X_{n}:y^{n}=f(x)$ and let $G_{2}$ be a trivially weighted subgraph of order two with generating polynomial $F_{G_{2}}$. 
Then $X_{n}$ has potential good reduction if and only if $\pi_{2}(\mathrm{trop}(f))\in{I(P_{0},P_{d(2)})}$. 
\end{cor}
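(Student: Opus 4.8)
The plan is to deduce the statement by chaining the two results already available. Proposition~\ref{CriterionPotGoodRed} says that $X_n$ has potential good reduction if and only if the phylogenetic type of the tree associated to the branch locus of $X_n\to\mathbf{P}^1$ is trivial, and the proposition on trivial trees in Section~\ref{SectionExamplesTrees} says that the phylogenetic type of the tree associated to $f(x)$ is trivial if and only if $\pi_2(\mathrm{trop}(f))\in I(P_0,P_{d(2)})$. So the real content is to identify ``the tree of the branch locus is trivial'' with ``the tree associated to $f(x)$ is trivial'' for a superelliptic cover.

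First I would pin down the branch locus. Since $f(x)$ is separable, every root $\alpha_i$ of $f$ is a totally ramified point of $y^n=f(x)$, so $Z(f)$ is contained in the branch locus $B$; and $B\subseteq Z(f)\cup\{\infty\}$ as recorded at the start of Section~\ref{Section3}, with $\infty\in B$ exactly when $n\nmid\deg f$. Hence the only discrepancy between the tree spanned by $B$ and the tree associated to $f(x)$ (the tree on $Z(f)\cup\{\infty\}$ with $\infty$ marked, as in Definition~\ref{DefinitionTreePolynomial}) is the possible absence of the leaf $\infty$. I would then check that this does not affect triviality of the phylogenetic type: such a tree is trivial precisely when all the valuations $\mathrm{val}(\alpha_i-\alpha_j)$ coincide, and in that case, by the description in Example~\ref{TreeBerkovich2}, its single internal vertex is the closed ball $\mathbf{B}(\alpha_1,a)$ with $a=\mathrm{val}(\alpha_1-\alpha_2)$, which is at the same time the root $v_\infty$ of the tree on $Z(f)\cup\{\infty\}$; consequently attaching or deleting the leaf $\infty$ neither creates nor removes a second internal vertex, so the tree of $B$ has trivial phylogenetic type if and only if the tree associated to $f(x)$ does.

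Granting this, the corollary is immediate: combining Proposition~\ref{CriterionPotGoodRed} with the previous step, $X_n$ has potential good reduction if and only if the tree associated to $f(x)$ has trivial phylogenetic type, and by the proposition on trivial trees this holds if and only if the Newton polygon of $F_{G_2}$ is a single line segment, i.e. $\pi_2(\mathrm{trop}(f))\in I(P_0,P_{d(2)})$. There is no genuine obstacle here, since everything reduces to the two propositions; the only step deserving care is the middle one, where one must verify that the possible presence of $\infty$ in the branch locus is harmless --- exactly the point that Remark~\ref{CounterexampleRemark} warns about for general covers, and which for superelliptic covers is what makes $Z(f)$ and the branch locus interchangeable as far as triviality of the phylogenetic type goes.
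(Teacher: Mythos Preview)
Your proof is correct and follows exactly the route the paper intends: the paper states that the corollary follows ``immediately'' from Proposition~\ref{CriterionPotGoodRed} together with the proposition on trivial trees in Section~\ref{SectionExamplesTrees}, without writing out any details. You supply precisely that chain, and in fact you are more careful than the paper on one point: you explicitly reconcile ``the tree associated to the branch locus'' (as in Proposition~\ref{CriterionPotGoodRed}) with ``the tree associated to $f(x)$'' (as in the trivial trees proposition). Your observation that both trees are phylogenetically trivial precisely when all the $\mathrm{val}(\alpha_i-\alpha_j)$ coincide, so that adding or removing the leaf $\infty$ cannot create or destroy a second internal vertex, is the right way to bridge this gap and is a worthwhile clarification that the paper leaves implicit.
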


\begin{rem}\label{CounterexampleRemark}
Given the result in Proposition \ref{CriterionPotGoodRed}, one might be led to hypothesize the following criterion for general coverings: a curve $X$ admitting a residually tame morphism $\psi:X\to\mathbf{P}^{1}$ has potential good reduction if and only if the tree corresponding to the branch locus of $\psi$ is phylogenetically trivial. As we saw in the proof of Proposition \ref{CriterionPotGoodRed}, this is true in one direction. The other direction does not hold however  by the following counterexample. Let $E$ be the elliptic curve defined by $y^2=x^3+Ax+B$, where $v(A)>0$, $v(B)=0$, $v(\Delta)=0$ and $\mathrm{char}(k)\neq{2,3}$. We also assume that $A\neq{0}$.  
Then $E$ has potential good reduction, but the branch locus of the degree three covering $(x,y)\mapsto{y}$ gives a nontrivial tree. See \cite[Theorem 10.6.1(3)]{tropabelian} for the details in the discretely valued case. 



\end{rem}

We now work out the semistable reduction types for superelliptic curves $X_{n}$ for $d\leq{5}$ using the tree data in Section \ref{SectionExamplesTrees}.  


\begin{exa}\label{ExampleD3}
Let $X_{n}$ be as in Theorem \ref{MainThm2} with $d=3$, so that $X_{n}$ is described by $y^{n}=f(x)=x^3-a_{1}x^2+a_{2}x-a_{3}$. The tree types of $f(x)$ including the edge lengths are given by the half-spaces in Section \ref{SectionDegreeThree}. For type II, the slope of the piecewise linear function $F=\mathrm{log}|f|$ along the nontrivial segment is two. We use this together with Lemmas \ref{FormulaEdges} and \ref{FormulaVertices} to determine the reduction types.  

Suppose first that $n\not\equiv{0}\bmod{3}$. Then $g(X_{n})=n-1$. 
  \begin{figure}
\centering
\includegraphics[width=11cm]{{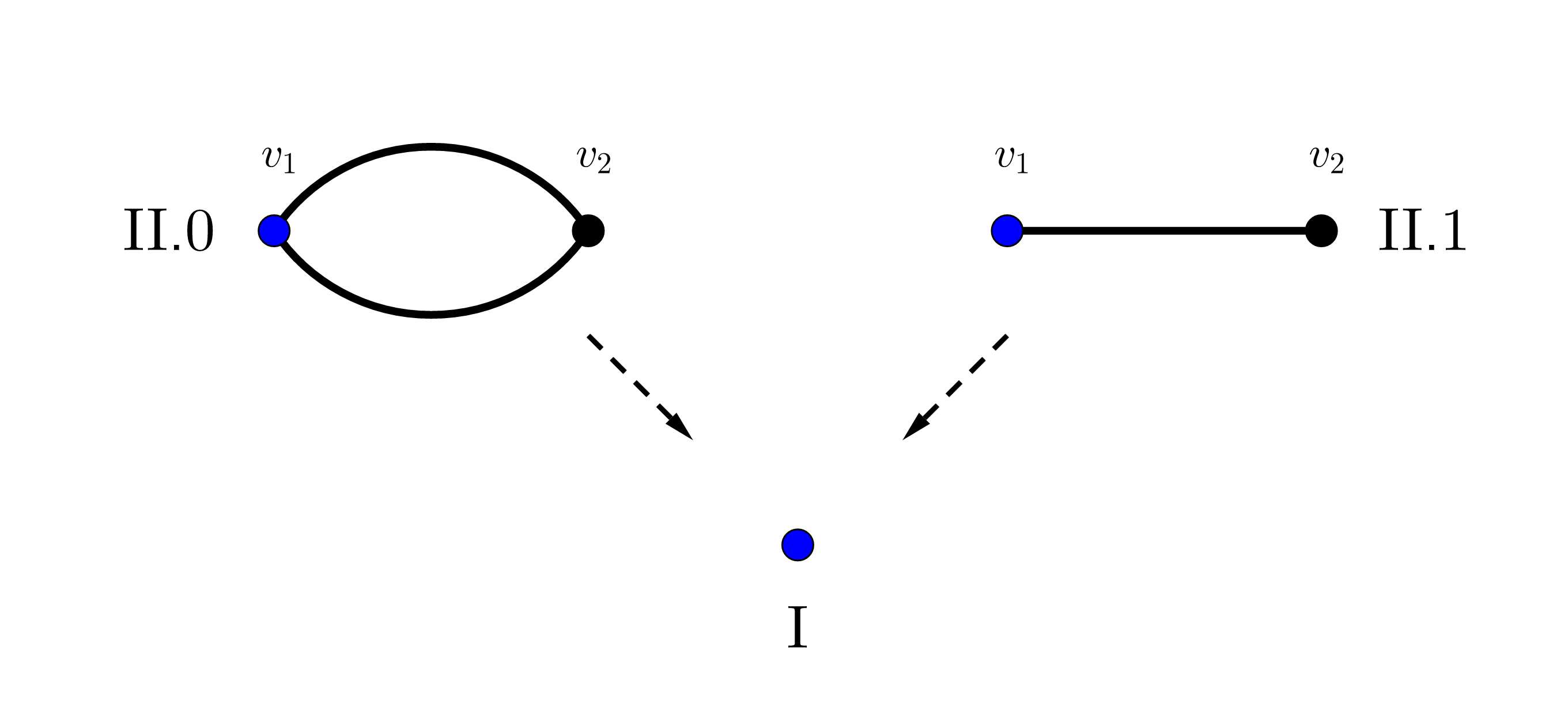}}
\caption{\label{GenusD3}
The reduction types for superelliptic curves $y^{n}=x^3-a_{1}x^2+a_{2}x-a_{3}$ as in Example \ref{ExampleD3}. The blue vertex denotes the vertex that a point over $\infty$ reduces to. The arrows correspond to contractions as in Figure \ref{TreeN3}. }
\end{figure}
The possible reduction types are as in Figure \ref{GenusD3} and the genera of the vertices and the edge lengths are given by the following table: 
 \begin{center}
 \begin{tabular}{|c| c|c|c|c|c| }
 \hline
 Reduction type & Additional congruence & $2g(v_{1})$ & $2g(v_{2})$ & $|D_{e}|$ & $\beta(\Sigma_{n})$  \\
 \hline
 I & -- & $2n-2$ & -- & -- & 0 \\ 
 \hline
 II.0 & $n\equiv{0}\bmod{2}$ & $n-2$ & $n-2$ & $n/2$ & 1 \\ 
 \hline
 II.1 & $n\equiv{1}\bmod{2}$ & $n-1$ & $n-1$ & $n$ & 0\\
 \hline
 \end{tabular}
 \end{center}

If $n\equiv{0}\bmod{3}$, then $g(X_{n})=n-2$. As before, there are three reduction types which can be found in Figure \ref{GenusD3}. The genera of the vertices and the edge lengths are given by the following table: 

 \begin{center}
 \begin{tabular}{|c| c|c|c|c|c| }
 \hline
 Reduction type & Additional congruence & $2g(v_{1})$ & $2g(v_{2})$ & $|D_{e}|$ & $\beta(\Sigma_{n})$  \\
 \hline
 I & -- & $2n-4$ & -- & -- & 0 \\ 
 \hline
 II.0 & $n\equiv{0}\bmod{2}$ & $n-4$ & $n-2$ & $n/2$ & 1 \\ 
 \hline
 II.1 & $n\equiv{1}\bmod{2}$ & $n-1$ & $n-3$ & $n$ & 0 \\
 \hline
 \end{tabular}
 \end{center}

To distinguish between I and II, we use the valuation of $j_{\mathrm{trop}}$ as in Section \ref{SimultaneousSemSta}. Note that if we use the classical $j$-invariant here, then we obtain the wrong result for fields of residue characteristic two. For instance, if we consider the curves $X_{n}$ 
for $n=1\bmod{2}$ and $f(x)\in\mathbf{C}_{2}[x]$, then $X_{n}$ has potential good reduction if and only if $v_{2}(j_{\mathrm{trop}})\geq{0}$. This criterion is not the same as having $v_{2}(j)\geq{0}$, since the 
$j$-invariant has additional factors of $2$, see Equation \ref{JInvariantModified}. 
For the bad characteristics (i.e., for primes dividing $n$), it might be that the invariants we use here can be modified in a similar way to obtain the right criteria. 


\end{exa}

For the upcoming cases we will only restrict to certain congruence subclass of $n$ to illustrate Theorem \ref{MainThm2}. We invite the reader to work out the remaining cases.

\begin{exa}\label{ExampleD4}
Let $X_{n}$ be as in Theorem \ref{MainThm2} with $d=4$, so $X_{n}$ is given by $y^{n}=f(x)=x^4-a_{1}x^3+a_{2}x^2-a_{3}x+a_{4}$. The slopes of $F=-\mathrm{log}|f|$ along the non-trivial line segments are as in the following table. Here the edges are as in Section \ref{SectionDegreeFour}. 

\begin{center}
\begin{tabular}{|c|c|c|}
\hline
Tree type & $\delta_{e_{1}}(F)$ & $\delta_{e_{2}}(F)$\\
\hline
I & - & - \\
\hline
II.1 & 3 & -\\
\hline
II.2 & 2 & - \\
\hline
III.1 & 3  & 2\\
\hline
III.2 & 2 & 2 \\
\hline
\end{tabular}
\end{center} 

For simplicity, we suppose that $n\not\equiv{0}\bmod{2}$. Then $g(X_{n})=\dfrac{3n-3}{2}$ by the Riemann-Hurwitz formula (note that $\infty$ is a branch point with $e_{\infty}=n$). The reduction types are given in Figure \ref{GenusD4}. 
\begin{figure}
\centering
\includegraphics[width=12cm]{{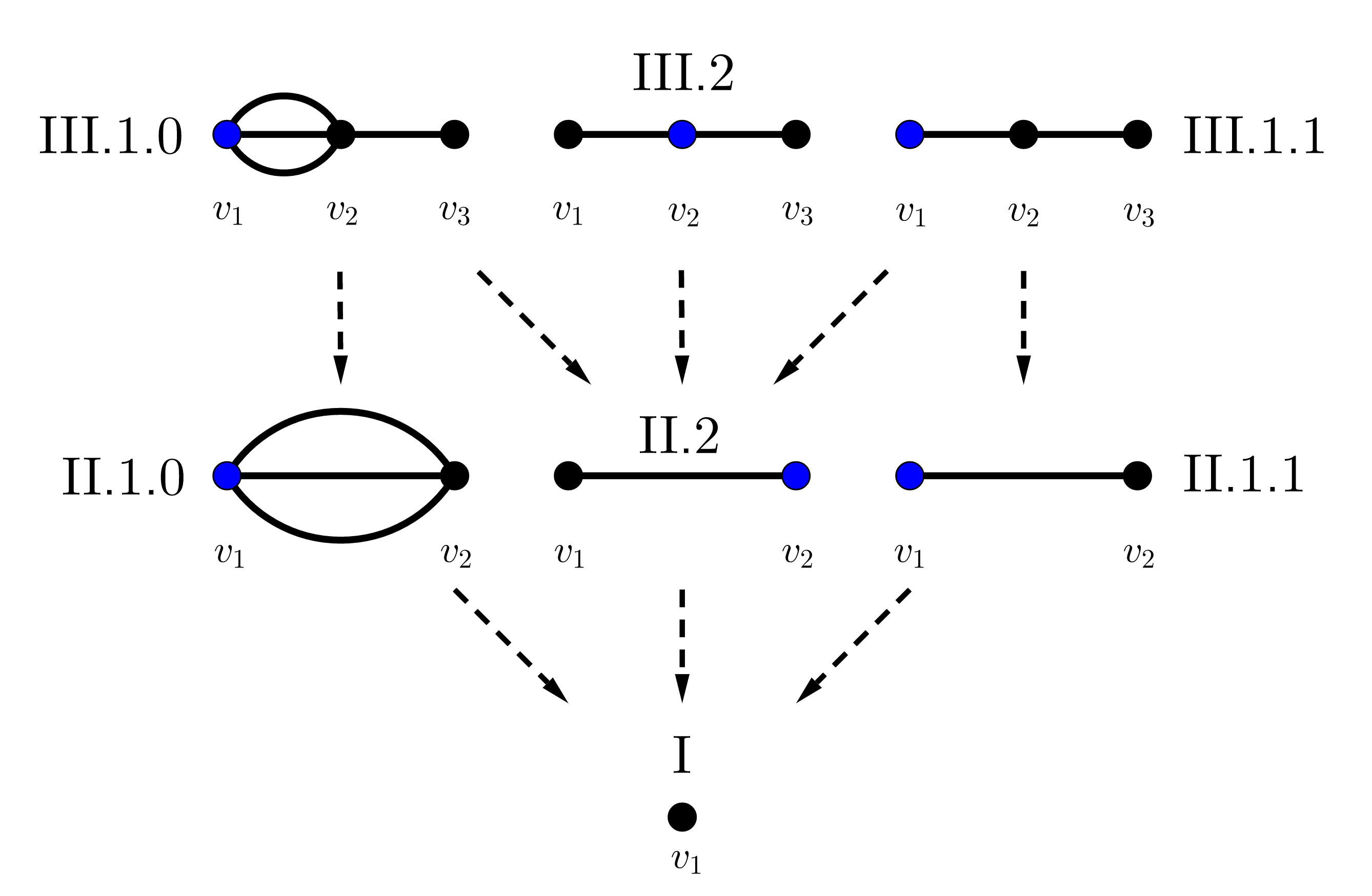}}
\caption{\label{GenusD4}
The reduction types for superelliptic curves $y^{n}=x^4-a_{1}x^3+a_{2}x^2-a_{3}x+a_{4}$ as in Example \ref{ExampleD4}. The blue vertex denotes the vertex that a point over $\infty$ reduces to.} 
\end{figure}
The genera of the vertices and the orders $|D_{e}|$ (which determine the lengths by Equation \ref{EdgeLengthsFormula}) are given in Table \ref{TableD4}. 
\begin{table}
 \begin{center}
 \begin{tabular}{|c|c||c|c|c||c|c||c|}
 \hline
 Reduction type & Congruence & $2g(v_{1})$ & $2g(v_{2})$ & $2g(v_{3})$ & $|D_{e_{1}}|$ & $|D_{e_{2}}|$ & $\beta(\Sigma_{n})$  \\
 \hline
 I & $n\equiv{1}\bmod{2}$ & ${3n-3}{}$ & -- & -- &  -- & -- & $0$\\ 
 \hline
 $\mathrm{II}.1.{0}$ & $n\equiv{3}\bmod{6}$ & ${n-3}{}$ & ${2n-4}{}$ & -- & {$n/3$} & -- & $2$ \\ 
 \hline
 $\mathrm{II}.1.{1}$ & $n\equiv{1,5}\bmod{6}$ & ${n-1}{}$ & ${2n-2}{}$ & -- & $n$& -- & $0$ \\
 \hline
 II.2 & $n\equiv{1}\bmod{2}$ & ${n-1}{}$ & ${2n-2}{}$ & -- & $n$ & --  & $0$ \\
 \hline
 III.1.0 & $n\equiv{3}\bmod{6}$ &${n-3}{}$ & ${n-3}{}$ & ${n-1}{}$ & {$n/3$} & $n$ & $2$ \\
 \hline
 III.1.1 & $n\equiv{1,5}\bmod{6}$ & ${n-1}{}$ & ${n-1}{}$ & ${n-1}{}$ & {$n$} & $n$ & $0$\\
 \hline
 III.2 & $n\equiv{1}\bmod{2}$ & ${n-1}{}$ & ${n-1}{}$ & ${n-1}{}$ &  $n$ & $n$ & $0$ \\
 \hline
 \end{tabular}
 \end{center}
 \caption{ \label{TableD4}The genera of the vertices and the expansion factors for the edges in Example \ref{ExampleD4}. The conditions for the tree types can be found in Example \ref{SectionDegreeFour}.   }
 \end{table}
If we plug in $n=3$, then we obtain a decomposition for {\it{Picard curves}}. The corresponding reduction types are on the lefthand side of Figure \ref{GenusD4}.  
\end{exa}

\newpage

\begin{exa}\label{ExampleD5}

Let $X_{n}$ be as in Theorem \ref{MainThm2} with $d=5$. In this case the reduction type depends on the image of $n$ in $\mathbf{Z}/60\mathbf{Z}$. We restrict ourselves to a subset of residue classes that contains hyperelliptic genus $2$ curves. That is, we suppose that $n\equiv{2}\bmod{4}$, $n\not\equiv{0}\bmod{3}$ and $n\not\equiv{0}\bmod{5}$. The latter condition implies that the covering $(x,y)\mapsto{x}$ ramifies completely over $\infty$.  
In Section \ref{SectionDegreeFive}, we saw that there are $7$ unmarked phylogenetic types, $12$ marked phylogenetic types and $18$ filtration types for these polynomials. It turns out that the reduction type does not depend on the marked point in this case. Our formulas for the edges however do depend on the marked point. If we use the leftmost vertex for our marked point, then the slopes of $F=-\mathrm{log}|f|$ are as follows:
\begin{center}
\begin{tabular}{|c|c|c|c|}
\hline
Tree type & $\delta_{e_{1}}(F)$ & $\delta_{e_{2}}(F)$ & $\delta_{e_{3}}(F)$\\
\hline
I & - & - & - \\
\hline
II & 3 & - & -\\
\hline
III & 2 & - & - \\
\hline
IV & 3  & 2 & -\\
\hline
V & 2 & 2 & -\\
\hline
VI & 4 & 3 & 2\\
\hline
VII & 4  & 2 & 2\\
\hline
\end{tabular}
\end{center} 
%


We have $g(X_{n})=2n-2$ by the Riemann-Hurwitz formula and the reduction types of the $X_{n}$ are as in Figure \ref{GenusD5}. 
\begin{figure}
\centering
\includegraphics[width=12cm]{{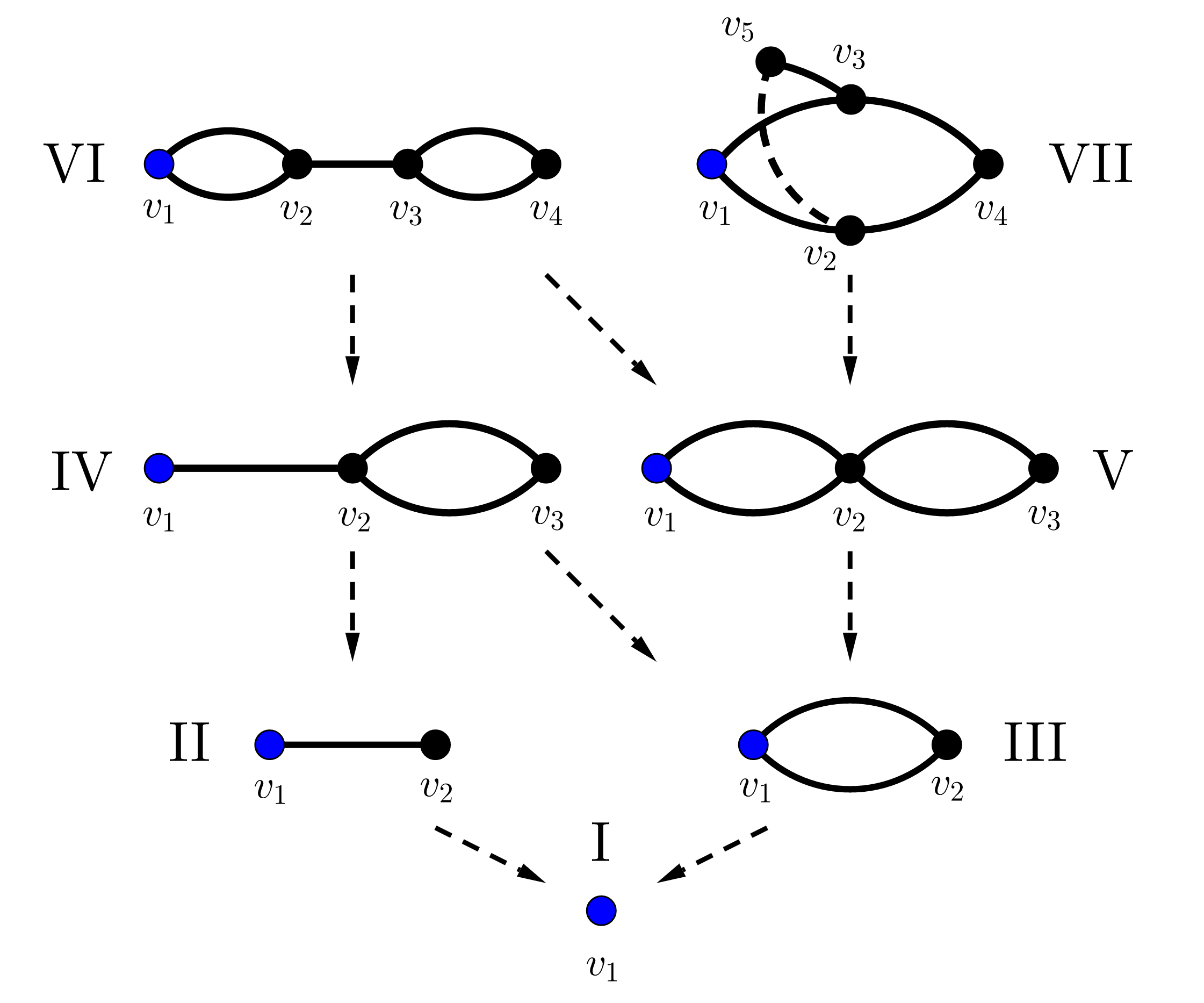}}
\caption{\label{GenusD5}
The reduction types for superelliptic curves $y^{n}=x^5-a_{1}x^4+a_{2}x^3-a_{3}x^2+a_{4}x-a_{5}$ as in Example \ref{ExampleD5}. The blue vertex corresponds to the marked point.  }
\end{figure}
The genera of the vertices, the orders $|D_{e}|$ and the first Betti number of $\Sigma(X_{n})$ are given by Table \ref{TableD5}. 
\begin{table}
 \begin{center}
 \begin{tabular}{|c||c|c|c|c|c||c|c|c||c| }
 \hline
 Reduction type & $2g(v_{1})$ & $2g(v_{2})$ & $2g(v_{3})$ & $2g(v_{4})$ & $2g(v_{5})$ & $|D_{e_{1}}|$ & $|D_{e_{2}}|$ & $|D_{e_{3}}|$ & $\beta(\Sigma_{n})$   \\
 \hline
 I & $4n-4$ & -- & -- & -- & -- & -- & -- & --  & $0$ \\
 \hline
   II& $2n-2$ & $2n-2$ & -- & --& -- & $n$ & -- & -- & $0$ \\
    \hline
    III & ${3n-4}$ & ${n-2}$ & -- & -- & -- & $n/2$ & -- & -- & $1$  \\
     \hline
     IV & $2n-2$  & $n-2$ & $n-2$ & -- & -- &  $n$ & $n/2$ & --& $1$ \\
      \hline
      V & $n-2$ & $2n-4$ & $n-2$ & -- & -- & $n/2$ & $n/2$ & --& $2$ \\
       \hline
       VI & $n-2$ & $n-2$ & $n-2$ & $n-2$ & -- & $n/2$ & $n$ & $n/2$ & $2$ \\
        \hline
        VII & $n-2$ & $n/2-1$ & $n/2-1$ & $n-2$ & $n-2$ & $n/2$ & $n/2$ & $n/2$ & $2$ \\
         \hline
 \end{tabular}

 \end{center}
   \caption{ \label{TableD5}The genera of the vertices and the expansion factors for the edge in Example \ref{ExampleD5}. The conditions for the tree types can be found in Example \ref{SectionDegreeFive}.   }
 \end{table}
If we use these formulas with $n=2$, then we a decomposition of the reduction types for genus $2$ curves. We invite the reader to compare this to \cite[Th\'{e}or\`{e}me 1]{liu}, where a similar decomposition using Igusa invariants is given. The edge lengths can be found in Proposition 2 of \cite{liu}. 
It is not clear to the author whether there is a direct connection between the invariants there and the ones in this paper. 
One thing that fails immediately is that the invariants given here are not all $\mathrm{SL}_{2}$-invariants, which implies that they are not in the algebra $\mathbf{Z}[J_{2},J_{4},J_{6},J_{8},J_{10}]$ generated by the Igusa invariants $J_{2i}$. 
The criteria given with the tropical invariants have two advantages over the Igusa invariants: first, 
they work for more general curves of the form $y^{n}=f(x)=x^{5}-a_{1}x^{4}+a_{2}x^{3}-a_{3}x^{2}+a_{4}x-a_{5}$. Furthermore, their definition makes the connection to the underlying tree of $f(x)$ clear. 
On the other hand, the formulas given in \cite{liu} also work in residue characteristic two, whereas the ones here do not.   
\end{exa}

\newpage

\section{Appendix}\label{Appendix}

We explicitly give the invariants that determine the tree type of polynomials of degree five here. To calculate these, one only needs an algorithm that can write symmetric polynomials in terms of elementary symmetric polynomials. We used the computer-algebra package Singular to do this.

\subsection{Polynomials of degree five}

 We write $f(x)=x^5-a_{1}x^4+a_{2}x^3-a_{3}x^2+a_{4}x-a_{5}$. In Section \ref{PolynomialsDegreeFive}, we used the two generating polynomials $F_{G_{2}}$ and $F_{G_{3}}$ corresponding to the complete graphs on two and three vertices respectively. The relevant coefficients of $F_{G_{2}}=\prod_{i=1}^{10}(x-[ij])$ are given by
\begin{align*}
c_{10,2}=&a_{1}^{2}a_{2}^{2}a_{3}^{2}a_{4}^{2}-4a_{2}^{3}a_{3}^{2}a_{4}^{2}-4a_{1}^{3}a
_{3}^{3}a_{4}^{2}+18a_{1}^{1}a_{2}^{1}a_{3}^{3}a_{4}^{2}-27a_{3}^{4}a_{4}^{2}-4
a_{1}^{2}a_{2}^{3}a_{4}^{3}+\\
& 16a_{2}^{4}a_{4}^{3}+18a_{1}^{3}a_{2}^{1}a_{3}^{1}a
_{4}^{3}-80a_{1}^{1}a_{2}^{2}a_{3}^{1}a_{4}^{3}-6a_{1}^{2}a_{3}^{2}a_{4}^{3}+14
4a_{2}^{1}a_{3}^{2}a_{4}^{3}-27a_{1}^{4}a_{4}^{4}+\\
&144a_{1}^{2}a_{2}^{1}a_{4}^{4
}-128a_{2}^{2}a_{4}^{4}-192a_{1}^{1}a_{3}^{1}a_{4}^{4}+256a_{4}^{5}-4a_{1}^{2}a
_{2}^{2}a_{3}^{3}a_{5}^{1}+\\
&16a_{2}^{3}a_{3}^{3}a_{5}^{1}+16a_{1}^{3}a_{3}^{4}a_
{5}^{1}-72a_{1}^{1}a_{2}^{1}a_{3}^{4}a_{5}^{1}+108a_{3}^{5}a_{5}^{1}+18a_{1}^{2
}a_{2}^{3}a_{3}^{1}a_{4}^{1}a_{5}^{1}-72a_{2}^{4}a_{3}^{1}a_{4}^{1}a_{5}^{1}-\\
&80
a_{1}^{3}a_{2}^{1}a_{3}^{2}a_{4}^{1}a_{5}^{1}+356a_{1}^{1}a_{2}^{2}a_{3}^{2}a_{
4}^{1}a_{5}^{1}+24a_{1}^{2}a_{3}^{3}a_{4}^{1}a_{5}^{1}-630a_{2}^{1}a_{3}^{3}a_{
4}^{1}a_{5}^{1}-6a_{1}^{3}a_{2}^{2}a_{4}^{2}a_{5}^{1}+\\
&24a_{1}^{1}a_{2}^{3}a_{4}
^{2}a_{5}^{1}+144a_{1}^{4}a_{3}^{1}a_{4}^{2}a_{5}^{1}-746a_{1}^{2}a_{2}^{1}a_{3
}^{1}a_{4}^{2}a_{5}^{1}+560a_{2}^{2}a_{3}^{1}a_{4}^{2}a_{5}^{1}+1020a_{1}^{1}a_
{3}^{2}a_{4}^{2}a_{5}^{1}-36a_{1}^{3}a_{4}^{3}a_{5}^{1}+\\
&160a_{1}^{1}a_{2}^{1}a_
{4}^{3}a_{5}^{1}-1600a_{3}^{1}a_{4}^{3}a_{5}^{1}-27a_{1}^{2}a_{2}^{4}a_{5}^{2}+
108a_{2}^{5}a_{5}^{2}+144a_{1}^{3}a_{2}^{2}a_{3}^{1}a_{5}^{2}-630a_{1}^{1}a_{2}
^{3}a_{3}^{1}a_{5}^{2}-128a_{1}^{4}a_{3}^{2}a_{5}^{2}+\\
&560a_{1}^{2}a_{2}^{1}a_{3
}^{2}a_{5}^{2}+825a_{2}^{2}a_{3}^{2}a_{5}^{2}-900a_{1}^{1}a_{3}^{3}a_{5}^{2}-19
2a_{1}^{4}a_{2}^{1}a_{4}^{1}a_{5}^{2}+1020a_{1}^{2}a_{2}^{2}a_{4}^{1}a_{5}^{2}-
900a_{2}^{3}a_{4}^{1}a_{5}^{2}+\\
&160a_{1}^{3}a_{3}^{1}a_{4}^{1}a_{5}^{2}-2050a_{1
}^{1}a_{2}^{1}a_{3}^{1}a_{4}^{1}a_{5}^{2}+2250a_{3}^{2}a_{4}^{1}a_{5}^{2}-50a_{
1}^{2}a_{4}^{2}a_{5}^{2}+\\
&2000a_{2}^{1}a_{4}^{2}a_{5}^{2}+256a_{1}^{5}a_{5}^{3}-
1600a_{1}^{3}a_{2}^{1}a_{5}^{3}+2250a_{1}^{1}a_{2}^{2}a_{5}^{3}+\\
&2000a_{1}^{2}a_
{3}^{1}a_{5}^{3}-3750a_{2}^{1}a_{3}^{1}a_{5}^{3}-2500a_{1}^{1}a_{4}^{1}a_{5}^{3
}+3125a_{5}^{4},\\
c_{9,2}=&-a_{1}^{2}a_{2}^{2}a_{3}^{4}+4a_{2}^{3}a_{3}^{4}+4a_{1}^{3}a_{3}^{5}-18a_{1}^
{1}a_{2}^{1}a_{3}^{5}+27a_{3}^{6}+6a_{1}^{2}a_{2}^{3}a_{3}^{2}a_{4}^{1}-\\
&24a_{2}
^{4}a_{3}^{2}a_{4}^{1}-26a_{1}^{3}a_{2}^{1}a_{3}^{3}a_{4}^{1}+116a_{1}^{1}a_{2}
^{2}a_{3}^{3}a_{4}^{1}+6a_{1}^{2}a_{3}^{4}a_{4}^{1}-198a_{2}^{1}a_{3}^{4}a_{4}^
{1}-\\
&9a_{1}^{2}a_{2}^{4}a_{4}^{2}+36a_{2}^{5}a_{4}^{2}+42a_{1}^{3}a_{2}^{2}a_{3}
^{1}a_{4}^{2}-186a_{1}^{1}a_{2}^{3}a_{3}^{1}a_{4}^{2}+18a_{1}^{4}a_{3}^{2}a_{4}
^{2}-\\
&114a_{1}^{2}a_{2}^{1}a_{3}^{2}a_{4}^{2}+434a_{2}^{2}a_{3}^{2}a_{4}^{2}+138
a_{1}^{1}a_{3}^{3}a_{4}^{2}-54a_{1}^{4}a_{2}^{1}a_{4}^{3}+282a_{1}^{2}a_{2}^{2}
a_{4}^{3}-224a_{2}^{3}a_{4}^{3}+\\
&18a_{1}^{3}a_{3}^{1}a_{4}^{3}-504a_{1}^{1}a_{2}
^{1}a_{3}^{1}a_{4}^{3}-40a_{3}^{2}a_{4}^{3}+72a_{1}^{2}a_{4}^{4}+320a_{2}^{1}a_
{4}^{4}-6a_{1}^{3}a_{2}^{2}a_{3}^{2}a_{5}^{1}+\\
&24a_{1}^{1}a_{2}^{3}a_{3}^{2}a_{5
}^{1}+32a_{1}^{4}a_{3}^{3}a_{5}^{1}-154a_{1}^{2}a_{2}^{1}a_{3}^{3}a_{5}^{1}+60a
_{2}^{2}a_{3}^{3}a_{5}^{1}+180a_{1}^{1}a_{3}^{4}a_{5}^{1}+\\
&18a_{1}^{3}a_{2}^{3}a
_{4}^{1}a_{5}^{1}-72a_{1}^{1}a_{2}^{4}a_{4}^{1}a_{5}^{1}-120a_{1}^{4}a_{2}^{1}a
_{3}^{1}a_{4}^{1}a_{5}^{1}+594a_{1}^{2}a_{2}^{2}a_{3}^{1}a_{4}^{1}a_{5}^{1}-330
a_{2}^{3}a_{3}^{1}a_{4}^{1}a_{5}^{1}-\\
&12a_{1}^{3}a_{3}^{2}a_{4}^{1}a_{5}^{1}-550
a_{1}^{1}a_{2}^{1}a_{3}^{2}a_{4}^{1}a_{5}^{1}-450a_{3}^{3}a_{4}^{1}a_{5}^{1}+21
6a_{1}^{5}a_{4}^{2}a_{5}^{1}-\\
&1230a_{1}^{3}a_{2}^{1}a_{4}^{2}a_{5}^{1}+1380a_{1}
^{1}a_{2}^{2}a_{4}^{2}a_{5}^{1}+1170a_{1}^{2}a_{3}^{1}a_{4}^{2}a_{5}^{1}+200a_{
2}^{1}a_{3}^{1}a_{4}^{2}a_{5}^{1}-2000a_{1}^{1}a_{4}^{3}a_{5}^{1}+72a_{1}^{4}a_
{2}^{2}a_{5}^{2}-\\
&450a_{1}^{2}a_{2}^{3}a_{5}^{2}+675a_{2}^{4}a_{5}^{2}-192a_{1}^
{5}a_{3}^{1}a_{5}^{2}+1320a_{1}^{3}a_{2}^{1}a_{3}^{1}a_{5}^{2}-2100a_{1}^{1}a_{
2}^{2}a_{3}^{1}a_{5}^{2}-1150a_{1}^{2}a_{3}^{2}a_{5}^{2}+\\
&3000a_{2}^{1}a_{3}^{2}
a_{5}^{2}-240a_{1}^{4}a_{4}^{1}a_{5}^{2}+1950a_{1}^{2}a_{2}^{1}a_{4}^{1}a_{5}^{
2}-3750a_{2}^{2}a_{4}^{1}a_{5}^{2}-\\
& 250a_{1}^{1}a_{3}^{1}a_{4}^{1}a_{5}^{2}+5000
a_{4}^{2}a_{5}^{2}-1000a_{1}^{3}a_{5}^{3}+3750a_{1}^{1}a_{2}^{1}a_{5}^{3}-6250a
_{3}^{1}a_{5}^{3},\\
c_{8,2}=&a_{1}^{2}a_{2}^{4}a_{3}^{2}-8a_{2}^{5}a_{3}^{2}-12a_{1}^{3}a_{2}^{2}a_{3}^{
3}+52a_{1}^{1}a_{2}^{3}a_{3}^{3}+\\
& 17a_{1}^{4}a_{3}^{4}-78a_{1}^{2}a_{2}^{1}a_{3}
^{4}-45a_{2}^{2}a_{3}^{4}+108a_{1}^{1}a_{3}^{5}-6a_{1}^{2}a_{2}^{5}a_{4}^{1}+24
a_{2}^{6}a_{4}^{1}+38a_{1}^{3}a_{2}^{3}a_{3}^{1}a_{4}^{1}-164a_{1}^{1}a_{2}^{4}
a_{3}^{1}a_{4}^{1}-54a_{1}^{4}a_{2}^{1}a_{3}^{2}a_{4}^{1}+\\
&246a_{1}^{2}a_{2}^{2}
a_{3}^{2}a_{4}^{1}+140a_{2}^{3}a_{3}^{2}a_{4}^{1}-14a_{1}^{3}a_{3}^{3}a_{4}^{1}
-240a_{1}^{1}a_{2}^{1}a_{3}^{3}a_{4}^{1}-270a_{3}^{4}a_{4}^{1}-\\
&25a_{1}^{4}a_{2}
^{2}a_{4}^{2}+112a_{1}^{2}a_{2}^{3}a_{4}^{2}-15a_{2}^{4}a_{4}^{2}+48a_{1}^{5}a_
{3}^{1}a_{4}^{2}-178a_{1}^{3}a_{2}^{1}a_{3}^{1}a_{4}^{2}-390a_{1}^{1}a_{2}^{2}a
_{3}^{1}a_{4}^{2}+\\
& 275a_{1}^{2}a_{3}^{2}a_{4}^{2}+960a_{2}^{1}a_{3}^{2}a_{4}^{2}
-48a_{1}^{4}a_{4}^{3}+400a_{1}^{2}a_{2}^{1}a_{4}^{3}-360a_{2}^{2}a_{4}^{3}-840a
_{1}^{1}a_{3}^{1}a_{4}^{3}+400a_{4}^{4}+6a_{1}^{3}a_{2}^{4}a_{5}^{1}-24a_{1}^{1
}a_{2}^{5}a_{5}^{1}-\\
&32a_{1}^{4}a_{2}^{2}a_{3}^{1}a_{5}^{1}+130a_{1}^{2}a_{2}^{3
}a_{3}^{1}a_{5}^{1}+40a_{2}^{4}a_{3}^{1}a_{5}^{1}+24a_{1}^{5}a_{3}^{2}a_{5}^{1}
-50a_{1}^{3}a_{2}^{1}a_{3}^{2}a_{5}^{1}-420a_{1}^{1}a_{2}^{2}a_{3}^{2}a_{5}^{1}
+70a_{1}^{2}a_{3}^{3}a_{5}^{1}+\\
&600a_{2}^{1}a_{3}^{3}a_{5}^{1}+56a_{1}^{5}a_{2}^
{1}a_{4}^{1}a_{5}^{1}-310a_{1}^{3}a_{2}^{2}a_{4}^{1}a_{5}^{1}+290a_{1}^{1}a_{2}
^{3}a_{4}^{1}a_{5}^{1}-200a_{1}^{4}a_{3}^{1}a_{4}^{1}a_{5}^{1}+\\
&1740a_{1}^{2}a_{
2}^{1}a_{3}^{1}a_{4}^{1}a_{5}^{1}-1650a_{2}^{2}a_{3}^{1}a_{4}^{1}a_{5}^{1}-1550
a_{1}^{1}a_{3}^{2}a_{4}^{1}a_{5}^{1}-\\
&640a_{1}^{3}a_{4}^{2}a_{5}^{1}+1400a_{1}^{
1}a_{2}^{1}a_{4}^{2}a_{5}^{1}+\\
&1000a_{3}^{1}a_{4}^{2}a_{5}^{1}-112a_{1}^{6}a_{5}
^{2}+840a_{1}^{4}a_{2}^{1}a_{5}^{2}-2025a_{1}^{2}a_{2}^{2}a_{5}^{2}+\\
&1750a_{2}^{
3}a_{5}^{2}-200a_{1}^{3}a_{3}^{1}a_{5}^{2}-250a_{1}^{1}a_{2}^{1}a_{3}^{1}a_{5}^
{2}+1875a_{3}^{2}a_{5}^{2}+2000a_{1}^{2}a_{4}^{1}a_{5}^{2}-5000a_{2}^{1}a_{4}^{
1}a_{5}^{2},\\
\end{align*}
\begin{align*}
c_{7,2}=&-a_{1}^{2}a_{2}^{6}+4a_{2}^{7}+8a_{1}^{3}a_{2}^{4}a_{3}^{1}-34a_{1}^{1}a_{2}^
{5}a_{3}^{1}-24a_{1}^{4}a_{2}^{2}a_{3}^{2}+
 112a_{1}^{2}a_{2}^{3}a_{3}^{2}-7a_{2
}^{4}a_{3}^{2}+\\
&28a_{1}^{5}a_{3}^{3}-148a_{1}^{3}a_{2}^{1}a_{3}^{3}+160a_{1}^{2}
a_{3}^{4}-102a_{2}^{1}a_{3}^{4}+
8a_{1}^{4}a_{2}^{3}a_{4}^{1}-56a_{1}^{2}a_{2}^{
4}a_{4}^{1}+106a_{2}^{5}a_{4}^{1}-30a_{1}^{5}a_{2}^{1}a_{3}^{1}a_{4}^{1}+\\
& 234a_{
1}^{3}a_{2}^{2}a_{3}^{1}a_{4}^{1}-504a_{1}^{1}a_{2}^{3}a_{3}^{1}a_{4}^{1}-
 54a_{
1}^{4}a_{3}^{2}a_{4}^{1}+150a_{1}^{2}a_{2}^{1}a_{3}^{2}a_{4}^{1}+\\
& 612a_{2}^{2}a_
{3}^{2}a_{4}^{1}-596a_{1}^{1}a_{3}^{3}a_{4}^{1}+26a_{1}^{6}a_{4}^{2}-178a_{1}^{
4}a_{2}^{1}a_{4}^{2}+388a_{1}^{2}a_{2}^{2}a_{4}^{2}-\\
& 308a_{2}^{3}a_{4}^{2}+92a_{
1}^{3}a_{3}^{1}a_{4}^{2}-158a_{1}^{1}a_{2}^{1}a_{3}^{1}a_{4}^{2}+\\
& 570a_{3}^{2}a_
{4}^{2}-118a_{1}^{2}a_{4}^{3}-80a_{2}^{1}a_{4}^{3}-6a_{1}^{5}a_{2}^{2}a_{5}^{1}
+34a_{1}^{3}a_{2}^{3}a_{5}^{1}-48a_{1}^{1}a_{2}^{4}a_{5}^{1}+\\
&16a_{1}^{6}a_{3}^{
1}a_{5}^{1}-86a_{1}^{4}a_{2}^{1}a_{3}^{1}a_{5}^{1}+86a_{1}^{2}a_{2}^{2}a_{3}^{1
}a_{5}^{1}+80a_{2}^{3}a_{3}^{1}a_{5}^{1}+\\
&112a_{1}^{3}a_{3}^{2}a_{5}^{1}-400a_{1
}^{1}a_{2}^{1}a_{3}^{2}a_{5}^{1}+700a_{3}^{3}a_{5}^{1}-68a_{1}^{5}a_{4}^{1}a_{5
}^{1}+180a_{1}^{3}a_{2}^{1}a_{4}^{1}a_{5}^{1}+290a_{1}^{1}a_{2}^{2}a_{4}^{1}a_{
5}^{1}-40a_{1}^{2}a_{3}^{1}a_{4}^{1}a_{5}^{1}-\\
&2150a_{2}^{1}a_{3}^{1}a_{4}^{1}a_
{5}^{1}+1500a_{1}^{1}a_{4}^{2}a_{5}^{1}+490a_{1}^{4}a_{5}^{2}-2450a_{1}^{2}a_{2
}^{1}a_{5}^{2}+2500a_{2}^{2}a_{5}^{2}+1500a_{1}^{1}a_{3}^{1}a_{5}^{2}-3750a_{4}
^{1}a_{5}^{2},\\
c_{6,2}=&3a_{1}^{4}a_{2}^{4}-18a_{1}^{2}a_{2}^{5}+25a_{2}^{6}-16a_{1}^{5}a_{2}^{2}a_{3
}^{1}+100a_{1}^{3}a_{2}^{3}a_{3}^{1}-\\
& 140a_{1}^{1}a_{2}^{4}a_{3}^{1}+22a_{1}^{6}
a_{3}^{2}-144a_{1}^{4}a_{2}^{1}a_{3}^{2}+174a_{1}^{2}a_{2}^{2}a_{3}^{2}+\\
& 52a_{2}
^{3}a_{3}^{2}+92a_{1}^{3}a_{3}^{3}-156a_{1}^{1}a_{2}^{1}a_{3}^{3}-53a_{3}^{4}-2
a_{1}^{6}a_{2}^{1}a_{4}^{1}+\\
& 32a_{1}^{4}a_{2}^{2}a_{4}^{1}-152a_{1}^{2}a_{2}^{3}
a_{4}^{1}+194a_{2}^{4}a_{4}^{1}-42a_{1}^{5}a_{3}^{1}a_{4}^{1}+330a_{1}^{3}a_{2}
^{1}a_{3}^{1}a_{4}^{1}-480a_{1}^{1}a_{2}^{2}a_{3}^{1}a_{4}^{1}-\\
& 378a_{1}^{2}a_{3
}^{2}a_{4}^{1}+708a_{2}^{1}a_{3}^{2}a_{4}^{1}-84a_{1}^{4}a_{4}^{2}+388a_{1}^{2}
a_{2}^{1}a_{4}^{2}-522a_{2}^{2}a_{4}^{2}+142a_{1}^{1}a_{3}^{1}a_{4}^{2}+\\
& 40a_{4}
^{3}+8a_{1}^{7}a_{5}^{1}-70a_{1}^{5}a_{2}^{1}a_{5}^{1}+194a_{1}^{3}a_{2}^{2}a_{
5}^{1}-144a_{1}^{1}a_{2}^{3}a_{5}^{1}+66a_{1}^{4}a_{3}^{1}a_{5}^{1}-\\
& 394a_{1}^{2
}a_{2}^{1}a_{3}^{1}a_{5}^{1}+240a_{2}^{2}a_{3}^{1}a_{5}^{1}+380a_{1}^{1}a_{3}^{
2}a_{5}^{1}+128a_{1}^{3}a_{4}^{1}a_{5}^{1}-\\
& 130a_{1}^{1}a_{2}^{1}a_{4}^{1}a_{5}^
{1}-950a_{3}^{1}a_{4}^{1}a_{5}^{1}-700a_{1}^{2}a_{5}^{2}+1750a_{2}^{1}a_{5}^{2},\\
c_{5,2}=&-3a_{1}^{6}a_{2}^{2}+24a_{1}^{4}a_{2}^{3}-66a_{1}^{2}a_{2}^{4}+66a_{2}^{5}+8a
_{1}^{7}a_{3}^{1}-66a_{1}^{5}a_{2}^{1}a_{3}^{1}+\\
& 192a_{1}^{3}a_{2}^{2}a_{3}^{1}-
220a_{1}^{1}a_{2}^{3}a_{3}^{1}-3a_{1}^{4}a_{3}^{2}+\\
& 20a_{1}^{2}a_{2}^{1}a_{3}^{2
}+118a_{2}^{2}a_{3}^{2}-104a_{1}^{1}a_{3}^{3}-\\
& 8a_{1}^{6}a_{4}^{1}+66a_{1}^{4}a_
{2}^{1}a_{4}^{1}-180a_{1}^{2}a_{2}^{2}a_{4}^{1}+196a_{2}^{3}a_{4}^{1}-10a_{1}^{
3}a_{3}^{1}a_{4}^{1}-104a_{1}^{1}a_{2}^{1}a_{3}^{1}a_{4}^{1}+\\
& 260a_{3}^{2}a_{4}^
{1}+169a_{1}^{2}a_{4}^{2}-360a_{2}^{1}a_{4}^{2}-24a_{1}^{5}a_{5}^{1}+\\
& 150a_{1}^{
3}a_{2}^{1}a_{5}^{1}-240a_{1}^{1}a_{2}^{2}a_{5}^{1}-110a_{1}^{2}a_{3}^{1}a_{5}^
{1}+400a_{2}^{1}a_{3}^{1}a_{5}^{1}-250a_{1}^{1}a_{4}^{1}a_{5}^{1}+625a_{5}^{2},\\
c_{4,2}=&a_{1}^{8}-10a_{1}^{6}a_{2}^{1}+45a_{1}^{4}a_{2}^{2}-104a_{1}^{2}a_{2}^{3}+
  95a_{2}^{4}-20a_{1}^{5}a_{3}^{1}+\\
  & 116a_{1}^{3}a_{2}^{1}a_{3}^{1}-160a_{1}^{1}a_{2}^
{2}a_{3}^{1}-
  44a_{1}^{2}a_{3}^{2}+
   92a_{2}^{1}a_{3}^{2}+18a_{1}^{4}a_{4}^{1}-\\
   & 98a
_{1}^{2}a_{2}^{1}a_{4}^{1}+
  124a_{2}^{2}a_{4}^{1}+
  36a_{1}^{1}a_{3}^{1}a_{4}^{1}-
95a_{4}^{2}
  +32a_{1}^{3}a_{5}^{1}-\\
  & 120a_{1}^{1}a_{2}^{1}a_{5}^{1}+200a_{3}^{1}a_{
5}^{1}.
\end{align*}

For the last set of invariants, we write $F_{G_{3}}=\prod{(x-\sigma_{i}([12][13][23]))}=\sum_{i=0}^{10}c_{10-i,3}x^{i}$, where the product is over a set of representatives $\sigma_{i}$ of $S_{5}/H_{G_{3},1}$ in $S_{5}$. To distinguish between the various filtration types, we need the coefficients $c_{4,3}, c_{6,3}$ and $c_{7,3}$. These consist of $955$, $284$ and $123$ monomials respectively, they can be found by a Gr\"{o}bner basis calculation. The interested reader can find the corresponding Singular code with output on \href{https://paulhelminck.files.wordpress.com/2020/12/singularcodeinvariants3.pdf}{this website}.

\begin{center}
\bibliographystyle{alpha}
\bibliography{bibfiles}{}
\end{center}

\end{document}